\renewcommand{\@biblabel}[1]{[#1]\hfill}
\DeclareFontShape{OMX}{cmex}{m}{b}{<-> cmexb10}{}
\DeclareSymbolFont{boldlargesymbols}{OMX}{cmex}{m}{b}
\let\div\relax
\DeclareMathOperator{\div}{div}
\DeclareMathOperator{\Div}{Div}
\DeclareMathOperator{\Id}{Id}
\DeclareMathOperator{\Aut}{Aut}
\DeclareMathOperator{\Span}{Span}
\DeclareMathOperator{\ores}{Ores}
\DeclareMathOperator{\pano}{PanOres}
\DeclareMathOperator{\res}{res}
\DeclareMathOperator{\sing}{sing}
\DeclareMathOperator{\pdisp}{pdisp}
\DeclareMathOperator{\wpdisp}{wpdisp}
\newcommand{\fieldC}{\mathcal{K}_{\Lambda}}
\newcommand{\fieldAlg}{\mathcal{K}_{1}}
\newcommand{\allOmegas}{\mathbb{C}/(\Lambda\oplus\mathbb{Z}\shiftnumber)}
\newcommand{\shift}{\tau}
\newcommand{\shiftnumber}{s}
\newcommand{\shiftalg}{S}
\newcommand{\shifttorus}{\shift^*}
\newcommand{\goRight}{
\begin{tikzpicture}[x=10pt,y=10pt,baseline=-0.7ex]
\draw[->] (0,0)--(1,0);
\filldraw[black] (0,0) circle (1pt);
\filldraw[black] (1,0) circle (1pt);
\filldraw[black] (-1,0) circle (1pt);
\filldraw[black] (0,1) circle (1pt);
\filldraw[black] (1,1) circle (1pt);
\filldraw[black] (-1,1) circle (1pt);
\filldraw[black] (0,-1) circle (1pt);
\filldraw[black] (1,-1) circle (1pt);
\filldraw[black] (-1,-1) circle (1pt);
\end{tikzpicture}
}
\newcommand{\goRightUp}{
\begin{tikzpicture}[x=10pt,y=10pt,baseline=-0.7ex]
\draw[->] (0,0)--(1,1);
\filldraw[black] (0,0) circle (1pt);
\filldraw[black] (1,0) circle (1pt);
\filldraw[black] (-1,0) circle (1pt);
\filldraw[black] (0,1) circle (1pt);
\filldraw[black] (1,1) circle (1pt);
\filldraw[black] (-1,1) circle (1pt);
\filldraw[black] (0,-1) circle (1pt);
\filldraw[black] (1,-1) circle (1pt);
\filldraw[black] (-1,-1) circle (1pt);
\end{tikzpicture}
}
\newcommand{\goUp}{
\begin{tikzpicture}[x=10pt,y=10pt,baseline=-0.7ex]
\draw[->] (0,0)--(0,1);
\filldraw[black] (0,0) circle (1pt);
\filldraw[black] (1,0) circle (1pt);
\filldraw[black] (-1,0) circle (1pt);
\filldraw[black] (0,1) circle (1pt);
\filldraw[black] (1,1) circle (1pt);
\filldraw[black] (-1,1) circle (1pt);
\filldraw[black] (0,-1) circle (1pt);
\filldraw[black] (1,-1) circle (1pt);
\filldraw[black] (-1,-1) circle (1pt);
\end{tikzpicture}
}
\newcommand{\goUpLeft}{
\begin{tikzpicture}[x=10pt,y=10pt,baseline=-0.7ex]
\draw[->] (0,0)--(-1,1);
\filldraw[black] (0,0) circle (1pt);
\filldraw[black] (1,0) circle (1pt);
\filldraw[black] (-1,0) circle (1pt);
\filldraw[black] (0,1) circle (1pt);
\filldraw[black] (1,1) circle (1pt);
\filldraw[black] (-1,1) circle (1pt);
\filldraw[black] (0,-1) circle (1pt);
\filldraw[black] (1,-1) circle (1pt);
\filldraw[black] (-1,-1) circle (1pt);
\end{tikzpicture}
}
\newcommand{\goLeft}{
\begin{tikzpicture}[x=10pt,y=10pt,baseline=-0.7ex]
\draw[->] (0,0)--(-1,0);
\filldraw[black] (0,0) circle (1pt);
\filldraw[black] (1,0) circle (1pt);
\filldraw[black] (-1,0) circle (1pt);
\filldraw[black] (0,1) circle (1pt);
\filldraw[black] (1,1) circle (1pt);
\filldraw[black] (-1,1) circle (1pt);
\filldraw[black] (0,-1) circle (1pt);
\filldraw[black] (1,-1) circle (1pt);
\filldraw[black] (-1,-1) circle (1pt);
\end{tikzpicture}
}
\newcommand{\goLeftDown}{
\begin{tikzpicture}[x=10pt,y=10pt,baseline=-0.7ex]
\draw[->] (0,0)--(-1,-1);
\filldraw[black] (0,0) circle (1pt);
\filldraw[black] (1,0) circle (1pt);
\filldraw[black] (-1,0) circle (1pt);
\filldraw[black] (0,1) circle (1pt);
\filldraw[black] (1,1) circle (1pt);
\filldraw[black] (-1,1) circle (1pt);
\filldraw[black] (0,-1) circle (1pt);
\filldraw[black] (1,-1) circle (1pt);
\filldraw[black] (-1,-1) circle (1pt);
\end{tikzpicture}
}
\newcommand{\goDown}{
\begin{tikzpicture}[x=10pt,y=10pt,baseline=-0.7ex]
\draw[->] (0,0)--(0,-1);
\filldraw[black] (0,0) circle (1pt);
\filldraw[black] (1,0) circle (1pt);
\filldraw[black] (-1,0) circle (1pt);
\filldraw[black] (0,1) circle (1pt);
\filldraw[black] (1,1) circle (1pt);
\filldraw[black] (-1,1) circle (1pt);
\filldraw[black] (0,-1) circle (1pt);
\filldraw[black] (1,-1) circle (1pt);
\filldraw[black] (-1,-1) circle (1pt);
\end{tikzpicture}
}
\newcommand{\goDownRight}{
\begin{tikzpicture}[x=10pt,y=10pt,baseline=-0.7ex]
\draw[->] (0,0)--(1,-1);
\filldraw[black] (0,0) circle (1pt);
\filldraw[black] (1,0) circle (1pt);
\filldraw[black] (-1,0) circle (1pt);
\filldraw[black] (0,1) circle (1pt);
\filldraw[black] (1,1) circle (1pt);
\filldraw[black] (-1,1) circle (1pt);
\filldraw[black] (0,-1) circle (1pt);
\filldraw[black] (1,-1) circle (1pt);
\filldraw[black] (-1,-1) circle (1pt);
\end{tikzpicture}
}
\definecolor{e-mail}{rgb}{0,.40,.80}
\definecolor{reference}{rgb}{.20,.60,.22}
\definecolor{citation}{rgb}{0,.40,.80}
\theoremstyle{definition}
\declaretheoremstyle[
    headfont=\bfseries\color{black}, bodyfont=\normalfont,
    mdframed={
        linewidth=2pt,
        rightline=false, topline=false, bottomline=false,
        linecolor=black, backgroundcolor=white,
    }
]{contentBox}
\declaretheorem[style=contentBox, name=Definition, numberwithin=section]{definition}
\declaretheorem[style=contentBox, name=Theorem, sibling=definition]{theorem}
\declaretheorem[style=contentBox, name=Lemma, sibling=definition]{lemma}
\declaretheorem[style=contentBox, name=Proposition, sibling=definition]{proposition}
\declaretheorem[style=contentBox, name=Problem, numbered=no]{problem}
\declaretheorem[style=contentBox, name=Question, numbered=no]{question}
\declaretheorem[style=contentBox, name=Corollary, sibling=definition]{corollary}
\declaretheorem[style=contentBox, name=Example, sibling=definition]{example}
\declaretheorem[style=contentBox, name=Remark, sibling=definition]{remark}
\newcommand\defemph[1]{\textbf{\emph{#1}}}
\author{Matthew William Babbitt}
\title{Summability of Elliptic Functions Via Residues}
\begin{document}

\frontmatter

\maketitle

\copyrightpage{2025}

\begin{dedication}
This thesis is dedicated \\
to my parents William and Nancy, \\
and to my brother Joshua, \\
for putting up with my math antics.
\end{dedication}

\begin{acks}{February 2025}

First and foremost my eternal gratitude goes to my advisor, Carlos Arreche, for the unwavering support he has given me in this endeavor. In our invaluable meetings he not only guided me through independent study, research, and writing, but he also helped me navigate math academia and showed endless kindness through my tribulations. He has shown me that mentorship is an art form and I hope to be like him when I grow up.

I am indebted to many of the faculty at the University of Texas at Dallas who have also guided me over the past several years. I am very grateful for Vladimir Dragovi\'{c} and Viswanath Ramakrishna for giving me opportunities to grow into teaching and mentorship through REUs and summer programs. Coaching the university Putnam team with Nathan Williams gave me the opportunity to show undergraduates the breadth of mathematics and I will remember those moments forever. John Zweck helped get my math career reoriented in a single session of office hours in my first semester, and since then I have been working hard to pay this tremendous gift forward.

There were many more conversations that I had with professors that I will remember and cherish, including Mohammad Akbar, Maxim Arnold, the late Zalman Balanov, Ronan Conlon, Baris Coskunuzer, and Mieczyslaw Dabkowski. I am also grateful to the administrators Kisa Sarachchandra and James Smith who both worked endlessly with me to help get every form and key to where it needed to go.

I have been blessed to have met Sri Rama Chandra Kushtagi within the last 3 years of my doctoral studies. Not only has Chandra has taught me how to read and write, but he has also shown me that there is much culture to be found within the mathematical community. In short, he has shown me that math is very much a human endeavor. I wish him the best of luck with his thesis defense and his book collection.

I owe my life to Patrick and Ema, and to their parents Harry, Linda, Marian, and Mihaela. They kept me when I could not secure housing during the height of the pandemic. I will treasure our myriad conversations, their enormously generous transportation to and from campus, and Mihaela's allergy test that helped me find how many foods I am now allergic to. I would not have finished my studies if not for them.

I have been fortunate to have many friends that have taught me, supported me, and kept me sane. There are too many to list, but I can certainly try. I have arranged them into groups, perhaps only recognizable to their members:

FDR, Julian, Keaton, Will, Willie,
Olivia, Adam, Alex, Andrew, Jareth, Jaz, Izzy,
David, Katie, Zach, NES and its Sheriffs,
Bhanu, Brighton, Safi, Mahesh, Josean, Russell,
Alavi, Colton, Grayson, Kevin,
Alina, Titu, David, Hiram,
Alexandra, Mukkai, Catherine, Mary,
Chris, Eshan, and George.

And finally, my studies would not have started in the first place had it not been for my family. My parents William and Nancy shifted their entire lives to help me pursue math through career changes, hundreds of hours of car rides to math events, and their unconditional support even when I fail. My brother Joshua has always been in my corner cheering me on. You have done more for me than thanks can convey.

\end{acks}

\begin{abstract}
Summability has been a central object of study in difference algebra over the past half-century. It serves as a cornerstone of algebraic methods to study linear recurrences over various fields of coefficients and with respect to various kinds of difference operators. Recently, Dreyfus, Hardouin, Roques, and Singer introduced a notion of elliptic orbital residues, which altogether serve as a partial obstruction to summability for elliptic functions with respect to the shift by a non-torsion point over an elliptic curve. We explain how to refine this into a complete obstruction, which promises to be useful in applications of difference equations over elliptic curves, such as elliptic hypergeometric functions and the combinatorics of walks in the quarter plane.
\end{abstract}

\tableofcontents

\mainmatter
\renewcommand\arraystretch{0.6}

\chapter{Introduction}
\label{c:intro}

Given a field $\mathcal{K}$ and an automorphism $\tau\in\Aut(\mathcal{K})$, so-called \defemph{first-order linear difference equations} of the form
\begin{equation}
\tau(y)-y=b\label{eq:diffeqord1}
\end{equation}
for $b\in\mathcal{K}$ and formal variable $y$, are of interest in the field of difference algebra. Should there exist a solution $y=m$ to the above equation~\eqref{eq:diffeqord1} where $m\in \mathcal{K}$, we say that $b$ is \defemph{summable}. This leads naturally to the question of which elements of $\mathcal{K}$ are summable: the dissertation before you lays a theoretical framework for answering this question in the case that $\mathcal{K}$ is the field of meromorphic functions over an elliptic curve $E$, and $\tau$ is a shift automorphism on $\mathcal{K}$ by a non-torsion point of $E$.

This work is positioned inside a larger program of calculating \defemph{differential Galois groups of linear difference equations} over elliptic curves. Deciding the summability of an elliptic function is a key subproblem. This serves in particular as a part of the computation of such differential Galois groups for second-order linear difference equations over elliptic curves. The goal is to perform this computation in an algorithmic manner whereas current methods are either ad-hoc or partial in nature. While those methods serve their applications well, a systematic algorithm for the computation can facilitate further, more demanding results.

\section{Difference Equations over Elliptic Curves}

Elliptic curves first arose in connection with the problem of calculating arc lengths of ellipses. This quickly evolved into the study of doubly-periodic functions on the complex plane $\mathbb{C}$. Since then, and particularly in more recent decades, they have seen more number theoretic applications, such as in the proof of Fermat's Last Theorem and in the construction of cryptographic schemes. In the past decade there have been some additional and surprising applications in more disparate fields: in the combinatorial study of analyzing walks confined to the first quadrant of the plane, and in an application towards mathematical physics via analyzing elliptic hypergeometric functions.

Elliptic curves can be defined in several different ways: for example, see \cite{Silverman2009,silvermanAdvancedTopicsArithmetic1994}. Most familiar is a complex-analytic construction of the quotient space of $\mathbb{C}$ modulo a $\mathbb{Z}$-lattice $\Lambda$. Less familiar and more general is the multiplicative version $K^{\times} / q^{\mathbb{Z}}$, often called a Tate curve, where $K$ is a complete normed field and $|q|<1$, covered in \cite[Ch.~V]{silvermanAdvancedTopicsArithmetic1994}. In algebraic geometry, an elliptic curve can be defined over any field $K$ as a curve of genus one with a distinguished $K$-rational point. In all contexts elliptic curves have the prominent feature of a \emph{group law} $\oplus$ on the points of the curve, and the difference equations of interest are defined in terms of this group operation.

Many important phenomena concerning an elliptic curve $E$ can be seen at the level of the field $\mathcal{K}$ of meromorphic functions on $E$. The group law on $E$ can be used to endow $\mathcal{K}$ with the structure of a \defemph{difference field} as follows. Given a point\footnote{This point is usually assumed for technical reasons to be \defemph{non-torsion}, i.e. have infinite order under the group law.} $s\in E$, the shift $z\mapsto z\oplus s$ on $E$ induces a shift automorphism
\begin{align*}
    \tau&:\mathcal{K}\to \mathcal{K}\\
    \tau&:f(z)\mapsto f(z\oplus s).
\end{align*}
A \defemph{linear difference equation} over the elliptic curve $E$ is an equation of the form
\begin{equation}
\sum\limits_{i=0}^{n} a_i(z)\cdot \tau^i(y) = \sum\limits_{i=0}^{n} a_i(z)\cdot f(z\oplus is) = b(z),\label{eq:diffEq}
\end{equation}
where $a_i(z)\in \mathcal{K}$ for all $1\leq i\leq n$, where $a_0\cdot a_n\neq 0$, and $y = f(z)$ is an unknown ``function'' (or perhaps a more general entity like a formal power series). If $b(z)=0$ then the equation is said to be homogeneous.

\section{Recent Applications}
\label{s:ellipticApplications}

There are two recent applications of linear difference equations over elliptic curves that are highly interesting because they lie in disparate fields of study. We briefly touch on two of these applications here to demonstrate their versatility.

Suppose we wish to count the number of walks from $(0,0)$ to $(i,j)$ using precisely $k$ steps, where each step is taken from a prescribed choice $\mathcal{D}\subseteq \{(0,\pm 1)\}^2 \, \backslash \,  \{(0,0)\}$ of permissible steps, i.e.,
\[\mathcal{D}\subseteq \Bigg\{ \ \goRight \ , \quad \goRightUp \ , \quad \goUp \ , \quad \goUpLeft \ , \quad \goLeft \ , \quad \goLeftDown \ , \quad \goDown \ , \quad \goDownRight \ \Bigg\}\]
and where each walk is forced to always stay within the first quadrant $\min\{x,y\}\geq 0$. Accounting for symmetry and discounting trivial choices of $\mathcal{D}$, there are only $79$ ``interesting'' sets $\mathcal{D}$ to be studied. Given a prescribed set $\mathcal{D}$ of admissible moves, one can then construct the generating series
\[Q_{\mathcal{D}}(x,y,t) = \sum\limits_{i,j,n\geq 0} w_{\mathcal{D}}(i,j,n) x^iy^jt^n\]
counting the number $w_{\mathcal{D}}(i,j,n)$ of walks following $\mathcal{D}$ staying in the first quadrant and ending at the point $(x,y)=(i,j)$ in exactly $n$ moves. It can then be shown that $Q_{\mathcal{D}}(x,y,t)$ satisfies a functional equation which, in $51$ of the interesting choices for $\mathcal{D}$, can be interpreted as a first-order linear difference equation of the form $\tau(f) - f=b $ for some meromorphic function $b$ on a particular elliptic curve $E$. The authors of \cite{Dreyfus2018} systematically study $Q_{\mathcal{D}}$ for all $51$ choices of $\mathcal{D}$ at once, using the data of the equation $\tau(f)-f=b$ alone. They were able to \emph{systematically} show which of the $Q_{\mathcal{D}}$ are \defemph{hypertranscendental}: i.e. which satisfy that it and all its derivatives are algebraically independent. \\

A second application of linear difference equations over elliptic curves is to elliptic hypergeometric functions. These objects were studied in \cite{Spiridonov2008} as a generalization of classical Euler--Gauss hypergeometric functions $\phantom{.}_2F_1(a,b,c;z)$. For $p,q\in\mathbb{C}^*$ such that $p^\mathbb{Z}\cap q^\mathbb{Z}=\{1\}$ and  $|p|,|q|<1$, and $\underline{t}=(t_1,\dots,t_8)\in(\mathbb{C}^*)^8$ satisfying $\prod_{j=1}^8t_j=p^2q^2$, the \defemph{elliptic hypergeometric function} is
%
\[ V(\underline{t}\ ;p,q):=\int_{S^1}\frac{\prod_{j=1}^8\Gamma(t_jz;p,q)\Gamma(t_jz^{-1};p,q)}{\Gamma(z^2;p,q)\Gamma(z^{-2};p,q)}\cdot \frac{dz}{z}; \]
 where the \defemph{elliptic Gamma functions} appearing in the integrand are \begin{gather*}\Gamma(z;p,q):=(pq/z;p,q)_\infty/(z;p,q)_\infty; \quad \text{where} \quad (z;p,q)_\infty:=\prod_{j,k\geq 0}(1-p^jq^kz).\end{gather*}

It is shown in \cite{Spiridonov2020} that, after a certain change of variables, $V(\underline{t};p,q)=f_{\varepsilon}(z)$ satisfies a difference equation over the (Tate) elliptic curve $E=\mathbb{C}^*/q^\mathbb{Z}$ with respect to the (multiplicative) shift automorphism $\tau(f(z))=f(pz)$, whose coefficients are elements of $\mathcal{K}$ that depend on some parameters $\underline{\varepsilon}=(\varepsilon_1,\dots,\varepsilon_8)\in(\mathbb{C}^*)^8$. Recently it was shown in \cite{Arreche2021} that, for ``most'' choices of parameters $\underline{\varepsilon}$, the functions $f_{\underline{\varepsilon}}(z)$ are hypertranscendental over $\mathcal{K}$. \\

\subsection{Commonalities: Galois Theory}

The key to both of the above results is differential Galois theory for difference equations. This theory, developed in \cite{Hardouin2008}, associates with equation~\eqref{eq:diffEq} a geometric object called the differential Galois group which encodes in its algebraic structure the differential properties of the solutions to equation~\eqref{eq:diffEq}. In particular, if the Galois group is ``sufficiently large'' then the Galois correspondence implies that there are ``few'' differential equations satisfied by the solutions to equation~\eqref{eq:diffEq}. Indeed, the work of \cite{Dreyfus2018} and \cite{Arreche2021} in the applications above show that, for the specific difference equations over elliptic curves relevant to their respective applications, the associated differential Galois groups are so large that the solutions must be hypertranscendental.

Remarkably, these authors were successful in deducing the largeness of these differential Galois groups in the absence of a complete algorithm to compute them. However, such methods are inherently unable to yield \emph{positive} results: should the solutions of some difference equation over an elliptic curve actually satisfy some nontrivial differential equations, there are no systematic methods to discover them. This dissertation is part of a larger program to enable this discovery.

A future aim of the author is to develop a suite of theoretical and algorithmic results to compute differential Galois groups associated to second-order linear difference equations over elliptic curves. This has been done in the contexts of linear difference equations over rational functions for various difference automorphisms: the shift automorphism in \cite{Arreche2016}, and the $q$-difference automorphism in \cite{Arreche2022}. A common subproblem in both algorithms is determining the summability of a rational function with those automorphisms. The recent work of \cite{Chen2012} develops new technologies called \defemph{discrete residues} and \defemph{$q$-discrete residues}, numbers that can be extracted from a rational function, that detect whether that rational function is summable. The goal of this thesis is to extend this technique to the elliptic case.

\begin{problem}
Given a meromorphic function over an elliptic curve, how can we extract constants from the function that let us decide whether this function is summable? In other words, can we find constants that form an \emph{obstruction} to summability?
\end{problem}

\section{Dissertation Overview}

Here lists brief descriptions of the chapters of this dissertation. \\

Chapter~\ref{c:basicResidues} provides background for the larger program of differential Galois theory of linear difference equations, as well as previous results on summability. It begins with a brief history of the Galois theories of differential and difference equations. It is followed by a discussion of the discrete and $q$-discrete residues from \cite{Chen2012}, with small computational examples for motivation. Both the shift and $q$-difference cases are discussed here.

There are difficulties in the elliptic case that are absent from these two cases. These complications cannot be formulated until later, so in the subsequent chapters we will often contrast with ideas and results from Chapter~\ref{c:basicResidues}. \\

Chapter~\ref{c:analytic} analyzes summability in the analytic setting of doubly-periodic meromorphic functions on $\mathbb{C}$. This will be the most lightweight setting: the problem can be solved entirely with concepts from complex analysis. This chapter will serve to highlight technical difficulties in a more elementary setting, and to motivate the approaches of Chapter~\ref{c:algebraic}.

The methods in this chapter draw from the works of \cite{Dreyfus2018,Hardouin2021}, which extend the discrete and $q$-discrete residues of \cite{Chen2012} to the elliptic case with \defemph{orbital residues}. We reinterpret these objects in the analytic setting, and introduce additional objects called \defemph{panorbital residues} that are necessary to complete the obstruction for summability in the analytic setting. \\

Chapter~\ref{c:algebraic} analyzes summability in the setting of algebraic curves of genus 1. The foundational works \cite{Dreyfus2018,Hardouin2021} on orbital residues reside in this context precisely, so we will delay the discussion of the preliminary results until this chapter. We will also draw parallels between this context and Chapter~\ref{c:analytic} to show how their results can be detected already from the analytic setting. \\

Chapter~\ref{c:future} concludes the thesis by laying down some preliminary results that contextualize the results of the thesis within the broader study of differential Galois theory of linear difference equations over elliptic curves.

\chapter{Background on Difference Equations}
\label{c:basicResidues}

The study of differential Galois theory of linear difference equations is one in a long line of Galois theories that extend past the classical Galois theory of polynomial equations. The initial study is of the Galois theory of differential equations, also named \emph{Picard-Vessiot Theory}. The development of these theories is not only interesting, but it informs the approaches we will see in the coming chapters.

\section{A History of Picard-Vessiot Theory}

In order to study equations more involved than polynomial, we require algebras with more equipped machinery than a field and its operations. These constructs have their own subfields of algebra:

\begin{itemize}
    \item \defemph{Differential algebra}, which lets us analyze a field $K_1$ equipped with a \defemph{derivation} $\delta:K_1\to K_1$ satisfying the addition and Leibniz rules
    \[\delta(a+b) = \delta(a)+\delta(b),\qquad \delta(ab) = \delta(a)b+a\delta(b)\]
    for all $a,b\in K_1$. The data $(K_1,\delta)$ is referred to as a \defemph{differential field}.
    \item \defemph{Difference algebra}, which lets us analyze a field $K_2$ equipped with an automorphism $\tau$. The data $(K_2,\tau)$ is referred to as a \defemph{difference field}.
\end{itemize}
The study of these structures lets us systematically from an algebraic viewpoint analyze \defemph{linear differential equations} and \defemph{linear difference equations}, respectively of the forms
\begin{align*}
    \delta^n(y)+a_{n-1}\delta^{n-1}(y) + \cdots + a_1\delta(y)+a_0y=0,\\
    \tau^n(y)+b_{n-1}\tau^{n-1}(y)+\cdots + b_1\tau(y)+b_0y=0,
\end{align*}
for $a_i\in K_1$, $b_j\in K_2$, and $n\in\mathbb{N}$. Algebraic developments in the 19th and 20th century led to the birth and actualization of a Galois theory for each of these two types of equations, both of which were endowed with the name \defemph{Picard-Vessiot theory}. These fields of study are more commonly referred to as the Galois Theories of Linear Differential and Difference equations.

The usual study of Galois theory examines polynomial equations: to a polynomial with coefficients in a field $K$ is associated a splitting field $L$ containing its roots, and Galois theory studies the $K$-automorphisms of $L$. In 1948, Ellis R. Kolchin developed in \cite{Kolchin1948} such a theory for linear differential equations that uses a \defemph{Picard-Vessiot extension} in the place of a splitting field. The Galois theory of difference equations, however, requires the use of rings instead of fields, and was only formulated in 1997 by Marius van der Put and Michael F. Singer in \cite{vdPSingSigma}. There is little room in the thesis for a comprehensive treatment, but the interested reader can find a history of the Galois theory of linear differential equations in \cite[Chapter~VIII]{borelEssaysHistoryLie2001}, and the preface of \cite{vdPSingSigma} gives a precise timeline of the development of the Galois theory of linear difference equations.

\begin{remark}
    Borel's essay \cite{borelEssaysHistoryLie2001} on the Galois theory of linear differential equations is quite illuminating as it contextualizes its development alongside that of Lie theory and the theory of linear algebraic groups. It also explains the contributions of Picard and Vessiot to their namesake theory, whereas the name and the primary development of the theory are due to Kolchin!
\end{remark}

The Galois theories above are each focused on finding and analyzing a Galois group. In the case of the Galois theory of linear differential equations, one computes a \defemph{differential Galois group} of their linear differential equation, and this encodes the ``differential symmetries'' satisfied by the solutions to such an equation. An analogous viewpoint holds for the Galois theory of linear difference equations.

A natural question then is how one can find the \emph{differential} symmetries of a linear \emph{difference} equation. Hardouin and Singer established this new Galois theory in 2008 in \cite{Hardouin2008}, naming the paper and the theory the \emph{differential Galois theory of linear difference equations}. This dissertation ties directly into problems in this particular Galois theory.

\subsection{Second-Order Linear Difference Equations}

Each time a Galois theory of linear difference equations is formulated, mathematicians begin by applying it to second-order linear difference equations: those of the form
\begin{equation}
\tau^2(y)+a\tau(y)+b=0\label{eq:2ndOrderGeneric}
\end{equation}
where $a$ and $b$ are elements of a field $K$, and $\tau$ is an automorphism on $K$. This tradition of constructing algorithms to compute the Galois groups corresponding to such an equation began with Hendriks in the late 1990s with \cite{HendriksShift,HendriksQ} which compute the difference Galois group of equation~\eqref{eq:2ndOrderGeneric} in the shift and the $q$-difference cases respectively. This tradition continued with Arreche in \cite{Arreche2016} and with Arreche and Zhang in \cite{Arreche2022} to compute the \emph{differential} Galois group of equation~\eqref{eq:2ndOrderGeneric} in the same respective cases, to only name two applications of the differential Galois theory of linear difference equations.

\begin{remark}
    It is a result of the general theory that the differential Galois group of equation~\eqref{eq:2ndOrderGeneric} is in fact a \emph{subgroup} of the difference Galois group of the same equation, see \cite[Proposition~2.8]{Hardouin2008}. As a result, a part of the algorithms to find the former Galois group has been to first compute the latter. The differential Galois theory of linear difference equations in every way builds off of the difference Galois theory developed three decades prior.
\end{remark}

In 2015 Dreyfus and Roques created an algorithm in \cite{DreyfusRoques2015} to compute the difference Galois group of Equation~\eqref{eq:2ndOrderGeneric} over the field of meromorphic functions on an elliptic curve. There is yet to be an algorithm to compute the differential Galois group of Equation~\eqref{eq:2ndOrderGeneric} in the elliptic case.

\subsection{The Method of Residues}

The algorithms in \cite{Arreche2016,Arreche2022} both compute the differential Galois group of equation~\eqref{eq:2ndOrderGeneric} over a function field $\mathcal{K}=C(x)$, where $C$ is a differential closure of $\overline{\mathbb{Q}}$. The automorphisms $\tau$ in these contexts are respectively the shift automorphism and the $q$-difference automorphism
\[\tau(f(x)) = f(x+1);\qquad \tau(f(x))=f(qx).\]
In each case, a critical part of the algorithm requires the ability to determine the summability of arbitrary $f(x)\in C(x)$, that is, the ability to determine the existence of $g(x)\in C(x)$ that would satisfy $\tau(g(x))-g(x)=f(x)$.

This can be done with a unifying concept called \emph{residues}, developed in \cite{Chen2012}. The authors define \defemph{discrete residues} for the shift automorphism and \defemph{$q$-discrete residues} for the $q$-difference automorphism. They obtain the following rather aesthetic results: summability in each context happens if and only if all the corresponding residues vanish, shown as \cite[Proposition~2.5]{Chen2012} and \cite[Proposition~2.10]{Chen2012} respectively. This gives the hope of similar results in other explicit contexts, such as the elliptic case described at the beginning of this introduction.

In fact work has already been done to this effect in the elliptic case: recently \defemph{orbital residues} were developed in \cite{Dreyfus2018} and \cite{Hardouin2021} on the way to the analysis of walks in the quarter plane described in Section~\ref{s:ellipticApplications}. These works also provide preliminary results on summability in terms of orbital residues. However, the results are not a complete obstruction: Proposition~B.8 in \cite{Dreyfus2018} shows that for a meromorphic function on an elliptic curve, its orbital residues all vanish if and only if it is at least ``almost'' summable\footnote{While we will see an analogous result in Chapter~\ref{c:analytic}, the algebraic geometry required to make this statement precise will be set in Chapter~\ref{c:algebraic}.}. The scope of this dissertation is therefore the following two goals.
\begin{enumerate}
    \item[1.] Complete the obstruction by defining what we will call \defemph{panorbital residues}, so that summability occurs if and only if orbital and panorbital residues all vanish.
    \item[2.] Place this summability result in the context of differential Galois theory of linear difference equations, and thus facilitate the creation of the algorithm to find the differential Galois group of equation~\eqref{eq:2ndOrderGeneric} in the elliptic case.
\end{enumerate}

\section{Residues in the Case of Rational Functions}
\label{s:residueExamples}

Here we motivate and summarize the work of \cite{Chen2012} on the development of residues as an obstruction to summability. We will be drawing parallels between this section and future chapters.

In this section, $C$ refers to an algebraically closed field of characteristic $0$. We work with rational functions: elements of the field $\mathcal{K}=C(x)$. Recall, the elements of $\mathcal{K}$ are ratios of polynomials with coefficients in $C$. Finally, as started in the previous section $\shift$ is the rational shift automorphism on $\mathcal{K}$ given by $\tau(f(x)) = f(x+1)$ for all $f(x)\in C(x)$.

The fact that $C$ is algebraically closed gives us the crucial tool of partial fraction decompositions, and the theory of residues is implemented through the lens of such expressions of rational functions. The following result is a formalization of the concept of partial fraction decompositions.

\begin{proposition}
    Let $f(x)\in \mathcal{K}$ be expressed as $a(x)/b(x)$, where the polynomials $a(x),b(x)\in C[x]$ share no linear factors. Let $\mathcal{R}\subset C$ be the set of zeros of $b(x)$. Then there exist constants $c_j(f,\alpha)$ for each $j\in\mathbb{N}$ and $\alpha\in\mathcal{R}$, and a polynomial $g(x)\in C[x]$, such that
    \[f(x) = g(x) + \sum\limits_{j\in\mathbb{N}}\sum\limits_{\alpha\in\mathcal{R}} \frac{c_j(f,\alpha)}{(x-\alpha)^j}.\]
\end{proposition}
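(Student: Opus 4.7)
The plan is to reduce the statement to two routine operations: polynomial long division, followed by a Chinese-Remainder-type splitting of the denominator, and finally a Taylor expansion at each pole. First I would use polynomial division in $C[x]$ to write $a(x) = q(x)b(x) + r(x)$ with $\deg r < \deg b$, so that $f(x) = q(x) + r(x)/b(x)$. Taking $g(x) := q(x)$ already handles the polynomial part of the claimed expression, reducing the problem to expanding the proper rational function $r(x)/b(x)$ as a finite sum $\sum_{\alpha\in\mathcal{R}}\sum_{j} c_j(f,\alpha)/(x-\alpha)^j$.

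Next, because $C$ is algebraically closed, I would factor $b(x) = c\prod_{\alpha\in\mathcal{R}}(x-\alpha)^{m_\alpha}$ for a leading constant $c\in C^{\times}$ and multiplicities $m_\alpha \geq 1$. The polynomials $(x-\alpha)^{m_\alpha}$ for distinct $\alpha\in\mathcal{R}$ are pairwise coprime in the principal ideal domain $C[x]$, so applying the Chinese Remainder Theorem (equivalently, iterating Bezout's identity) yields a decomposition
\[\frac{r(x)}{b(x)} \;=\; \sum_{\alpha\in\mathcal{R}}\frac{r_\alpha(x)}{(x-\alpha)^{m_\alpha}}\]
for polynomials $r_\alpha(x)\in C[x]$ with $\deg r_\alpha < m_\alpha$.

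To finish, I would expand each summand around its own pole. For fixed $\alpha\in\mathcal{R}$, writing $r_\alpha(x)$ in the $C$-basis $\{1,(x-\alpha),\dots,(x-\alpha)^{m_\alpha-1}\}$ of polynomials of degree less than $m_\alpha$ gives $r_\alpha(x)=\sum_{k=0}^{m_\alpha-1} d_k(x-\alpha)^k$ for some $d_k\in C$. Dividing by $(x-\alpha)^{m_\alpha}$ produces exactly a finite sum $\sum_{j=1}^{m_\alpha} c_j(f,\alpha)/(x-\alpha)^j$ with $c_j(f,\alpha) := d_{m_\alpha-j}$, and setting $c_j(f,\alpha) := 0$ for $j>m_\alpha$ (or for $\alpha\notin\mathcal{R}$) renders the outer double sum over $j\in\mathbb{N}$ and $\alpha\in\mathcal{R}$ well-defined with only finitely many nonzero terms.

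The argument is mostly bookkeeping and presents no real obstacle: each step relies only on standard commutative algebra over a field. The only subtlety worth flagging is the role of the hypothesis that $a(x)$ and $b(x)$ share no linear factors, which ensures that $\mathcal{R}$ is precisely the pole set of $f$ and that none of the $c_j(f,\alpha)$ is forced to represent a removable singularity. This hypothesis is not needed for the existence claim above, but it would be essential if one later wishes to assert uniqueness of the constants $c_j(f,\alpha)$, which will matter in the sequel when residues are read off from this decomposition.
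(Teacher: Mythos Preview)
Your argument is correct and follows the standard route to partial fraction decomposition: polynomial division, then a CRT/B\'ezout splitting over the pairwise coprime factors $(x-\alpha)^{m_\alpha}$, then rewriting each numerator in the basis of powers of $(x-\alpha)$. The paper does not actually give a proof of this proposition; it states the result as a formalization of partial fraction decomposition, remarks that algebraic closedness is what makes $b(x)$ split into linear factors, and illustrates with a numerical example. So there is no paper proof to compare against, and your write-up is a clean, self-contained justification of what the paper takes for granted.

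Your closing observation about the coprimality hypothesis is also on point: the existence statement survives without it, but the identification of $c_j(f,\alpha)$ with the principal-part coefficients at genuine poles (which the paper uses immediately afterward, cf.\ Remark~\ref{rem:pfdIsGlobal}) does depend on $a$ and $b$ having no common linear factor.
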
\medskip\medskip

\noindent The fact that $C$ is algebraically closed is necessary for $b(x)$ to split into linear factors of the form $(x-\alpha)$. The sum is of finite support: if $j$ is greater than the multiplicity of $\alpha$ as a root of $b(x)$ then $c_j(f,\alpha)=0$. For instance, the following is a partial fraction decomposition of a rational function in $\mathbb{C}(x)$:
\[\frac{3 x^6 - 12 x^5 + 11 x^4 + 9 x^3 - 28 x^2 + 39 x - 12}{(x - 2)^2(x^2+1)} = 3x^2-4 + \frac{3}{x-i} + \frac{3}{x+i} - \frac{1}{x-2} + \frac{2}{(x-2)^2}.\]

\begin{remark}
\label{rem:pfdIsGlobal}
    There is motivation to viewing the partial fraction decomposition of a rational function as an aggregation of its local expansions at its poles. In the above example the local expansions of the above function at the poles $i$, $-i$, and $2$ are respectively
    \begin{align*}
        \frac{3}{(x-i)^1} + &\left[3x^2-4 + \frac{3}{x+i} - \frac{1}{x-2} + \frac{2}{(x-2)^2}\right],\\
        \frac{3}{(x+i)^1} + &\left[3x^2-4 + \frac{3}{x-i} - \frac{1}{x-2} + \frac{2}{(x-2)^2}\right],\\
        \frac{2}{(x-2)^2}+\frac{-1}{(x-2)^1} + &\left[3x^2-4 +\frac{3}{x-i}+ \frac{3}{x+i}\right],
    \end{align*}
    with the bracketed expressions having no poles at $i$, $-i$, and $2$ respectively. Each unbracketed expression is called the \defemph{principal part} of the local expansion near that particular pole. In the rational case we have the luxury of this aggregation forming a \emph{global} expression for the rational function. Partial fraction decompositions do not exist in the elliptic context. They will be replaced with more technical tools in Chapters~\ref{c:analytic} and \ref{c:algebraic}.
\end{remark}

The residues defined in \cite{Chen2012} are expressed in terms of such partial fraction decompositions. We first analyze this in the case of the shift automorphism.

\subsection{Discrete Residues}

Define the shift automorphism $\tau:\mathcal{K}\to\mathcal{K}$ by $\tau(f(x)) = f(x+1)$. We are interested in for which rational functions $f(x)\in\mathcal{K}$ there exist $g(x)\in\mathcal{K}$ for which
\[\tau(g(x)) - g(x) = f(x).\]
We call such $f(x)$ \defemph{summable}.

An immediate observation is that all polynomials in $C(x)$ are summable. Indeed, for $n\geq 1$ we have
\[\tau(x^n) - x^n = (x+1)^n  -x^n = \sum\limits_{k=0}^{n-1} \binom{n}{k}x^k,\]
and this collection of polynomials forms a basis of $C[x]$. These polynomials are summable by construction, and because $\tau$ is a field automorphism it follows that all elements of their span $C[x]$ are summable.

The question of whether rational functions are summable is slightly harder. But with sufficient numerical experiments we can see patterns that are suggestive to proper summability criteria. Suppose we construct the following rational function in $\mathbb{C}(x)$ from its partial fraction decomposition:
\[g(x) = \frac{3}{x-1} - \frac{4}{x-2} + \frac{7}{x^2} + \frac{1}{x-\pi}.\]
Then acting $\tau-\Id$ on $g(x)$ yields
\begin{align*}
    \tau(g(x)) - g(x) = \ & \frac{3}{x} - \frac{7}{x-1} + \frac{4}{x-2} \\
    &+ \frac{7}{(x+1)^2}-\frac{7}{x^2}\\
    &+ \frac{1}{x-\pi+1} - \frac{1}{x-\pi}.
\end{align*}
The right-hand side is summable by definition. We have sorted the terms so the poles are grouped by:
\begin{itemize}
    \item Orbits of the underling shift action of $x\mapsto x+1$ on $\mathbb{C}$,
    \item Orders of the poles in each degree.
\end{itemize}
Within each group the numerators sum to zero. This will hold in all experiments, and in fact is exactly the summability criterion in \cite{Chen2012}. These sums are what they refer to as discrete residues, as defined in \cite[Definition~2.3]{Chen2012}.

\begin{definition}
    Let $f(x)\in\mathcal{K}$ have partial fraction decomposition given by
    \[f(x) = g(x) + \sum\limits_{j\in\mathbb{N}}\sum\limits_{\alpha\in\mathcal{R}} \frac{c_j(f,\alpha)}{(x-\alpha)^j},\]
    where $\mathcal{R}$ is the set of poles of $f(x)$, the constants $c_j(f,\alpha)\in C$ are the coefficients of the principal parts of the local expansion of $f(x)$ near $\alpha$, and $g(x)\in C[x]$ is a polynomial. For a $\beta\in C$, let $[\beta]$ be the $\mathbb{Z}$-orbit $\{\beta+n\mid n\in\mathbb{Z}\}$. Then the \defemph{discrete residue} of $f$ at the orbit $[\beta]$ of order $j$ is
    \[\text{dres}(f,[\beta],j) = \sum\limits_{n\in\mathbb{Z}} c_j(f,\beta+n).\]
\end{definition}

These discrete residues then form an obstruction to summability of rational functions, as shown in \cite[Proposition~2.5]{Chen2012}.

\begin{theorem}
    Let $f\in\mathcal{K}$. Then $f$ is summable with respect to the shift automorphism if and only if the discrete residues of $f$ at all orbits and all orders vanish.
\end{theorem}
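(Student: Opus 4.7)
The proof plan has two directions, both driven by the observation that $\tau$ acts on the partial fraction decomposition orbit-by-orbit and order-by-order, so the problem decouples completely.

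For the forward direction, I would take an arbitrary $g\in\mathcal{K}$ with partial fraction decomposition $g=h(x)+\sum_j\sum_\alpha c_j(g,\alpha)/(x-\alpha)^j$ and compute $\tau(g)-g$ directly. The polynomial part $\tau(h)-h$ contributes only a polynomial (of strictly smaller degree than $h$ if $h$ is nonconstant), so no poles arise from it. Each term $c_j(g,\alpha)/(x-\alpha)^j$ contributes $c_j(g,\alpha)/(x-(\alpha-1))^j - c_j(g,\alpha)/(x-\alpha)^j$, which lives entirely in the orbit $[\alpha]$ at order $j$ and has coefficients summing to zero. Since partial fraction decompositions are unique and distinct orbits/orders do not interact, summing over all $(\alpha,j)$ shows that every discrete residue of $\tau(g)-g$ vanishes. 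Hence the discrete residues vanish on any summable $f$.

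For the reverse direction, suppose all discrete residues of $f$ vanish. I would split $f$ as its polynomial part $p(x)$ plus the sum over orbit-order pairs $([\beta],j)$ of the finite partial sums $f_{[\beta],j}(x)=\sum_n c_j(f,\beta+n)/(x-\beta-n)^j$, where only finitely many coefficients $c_n:=c_j(f,\beta+n)$ are nonzero and $\sum_n c_n=0$ by hypothesis. The polynomial part $p(x)$ is summable by the basis-of-binomial-differences argument already given in the excerpt. For each orbit-order piece I would construct an antidifference explicitly via telescoping: set $d_m=\sum_{k<m}c_k$, which is finitely supported because $\sum_k c_k=0$, and take $g_{[\beta],j}(x):=\sum_m d_m/(x-\beta-m)^j$. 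A direct computation gives $\tau(g_{[\beta],j})-g_{[\beta],j}=\sum_m(d_{m+1}-d_m)/(x-\beta-m)^j=f_{[\beta],j}$. Summing the finitely many nonzero $g_{[\beta],j}$ together with the antidifference of $p(x)$ yields the desired $g\in\mathcal{K}$ with $\tau(g)-g=f$.

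The main technical obstacle is essentially bookkeeping rather than any deep difficulty: one must be careful that the decomposition of $f$ is honestly finite (only finitely many orbits and only finitely many orders contribute), and that the telescoping construction produces a \emph{rational} function rather than a formal series, which follows precisely from the vanishing of the residue. The proof relies crucially on three structural facts about the rational setting: that partial fraction decompositions exist, that they are unique, and that the shift automorphism preserves both pole orders and orbit membership. As noted in Remark~\ref{rem:pfdIsGlobal}, it is the first of these — the existence of a \emph{global} partial fraction decomposition — that will fail in the elliptic setting of Chapters~\ref{c:analytic} and \ref{c:algebraic}, and this is where the essential new ideas of the thesis will be needed.
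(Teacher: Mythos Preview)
Your proof is correct. The paper itself does not prove this theorem: it cites the result as \cite[Proposition~2.5]{Chen2012} and only provides a motivating numerical example beforehand. Your argument is essentially the standard one from that reference --- the forward direction by direct telescoping of the partial fraction decomposition, and the reverse direction by constructing the antidifference via partial sums $d_m=\sum_{k<m}c_k$, whose finite support relies precisely on the vanishing-residue hypothesis. Nothing is missing, and your closing remark about which structural facts fail in the elliptic setting is accurate and aligns with the paper's later development.
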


\subsection{$q$-Discrete Residues}

In this section we still consider the field of rational functions $\mathcal{K}=C(x)$, but now we consider the $q$-difference automorphism $\tau:\mathcal{K}\to\mathcal{K}$ defined by $\tau(f(x)) = f(qx)$, where $q$ is not a root of unity in $C$. For example,
\[\tau\left( \frac{x+1}{x^2+3x+2}\right) = \frac{qx+1}{q^2x^2+3qx+2}.\]
\indent This new automorphism produces complications to the formulation of residues in terms of a partial fraction decomposition. In the previous subsection, the shift difference automorphism shifted every pole by $1$. Now, the $q$-difference automorphism does not shift any pole at $0$. Furthermore, one can show that nonzero constants in $C$ are no longer summable. And recall the thought experiment in the previous subsection of sorting poles into orbits: the partial fraction decomposition must account for this when the automorphism action on $\mathcal{K}$ sends $x$ to $qx$. In all, one must use the following form of the partial fraction decomposition:
\begin{equation}
f(x) = c + xp_1(x) + \frac{p_2(x)}{x}+\sum\limits_{\alpha\in\mathcal{R}}\sum\limits_{j\in\mathbb{N}}\sum\limits_{\ell\in\mathbb{Z}} \frac{c_j(f,q^{\ell}\cdot\alpha)}{(x-q^{\ell}\cdot\alpha)^j}\label{eq:qdresPFD}
\end{equation}
where $c\in C$ is constant, $p_1(x),p_2(x)\in C[x]$ are polynomials, and $\mathcal{R}$ is a maximal set of poles of $f(x)$ that are in distinct $q^{\mathbb{Z}}$-orbits. The definition of $q$-discrete residues is then the following, from \cite[Definition~2.7]{Chen2012}:

\begin{definition}
    Let $f\in \mathcal{K}$ have a partial fraction decomposition as in Equation~\eqref{eq:qdresPFD}. Then the $q$-discrete residue of $f$ at the $q^{\mathbb{Z}}$-orbit $[\beta]_q$ of multiplicity $j$ is defined to be
    \[\text{qres}(f,[\beta]_q,j) = \sum\limits_{\ell\in\mathbb{Z}} q^{-\ell j}c_j(f,q^{\ell}\cdot\beta).\]
    In addition, the constant $c$ is called the $q$-discrete residue of $f$ at infinity, denoted by $\text{qres}(f,\infty)$.
\end{definition}

These $q$-discrete residues form a complete obstruction to summability for the $q$-difference automorphism, as shown in \cite[Proposition~2.10]{Chen2012}:

\begin{theorem}
    Let $f\in\mathcal{K}$. Then $f$ is summable with respect to the $q$-difference automorphism if and only if the $q$-discrete residues of $f$ at all orbits and all orders vanish, as well as the $q$-discrete residue at $\infty$.
\end{theorem}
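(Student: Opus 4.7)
The plan is to exploit the fact that the partial fraction decomposition~\eqref{eq:qdresPFD} realizes $\mathcal{K}$ as a direct sum of $\tau$-invariant subspaces, and then to analyze the action of $\tau - \Id$ on each summand separately. Concretely, I would decompose $\mathcal{K} = C \oplus xC[x] \oplus x^{-1}C[x^{-1}] \oplus \bigoplus_{[\beta]_q,\,j} V_{[\beta]_q,j}$, where $[\beta]_q$ ranges over the nonzero $q^{\mathbb{Z}}$-orbits in $C$ represented by $\mathcal{R}$, where $j\in\mathbb{N}$, and where $V_{[\beta]_q,j} = \Span_C\{e_\ell := (x-q^\ell\beta)^{-j} \mid \ell \in \mathbb{Z}\}$. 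Each of these subspaces is $\tau$-stable, because $\tau(x^n) = q^n x^n$ and $\tau(e_\ell) = q^{-j} e_{\ell-1}$. Hence $f$ is summable if and only if its component in each summand lies in the image of $\tau - \Id$ restricted to that summand.

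First I would dispose of the ``polynomial in $x$ and $x^{-1}$'' summands. On $xC[x]$, the operator $\tau - \Id$ acts diagonally with nonzero eigenvalues $q^n - 1$ for $n \ge 1$, which are all nonzero since $q$ is not a root of unity; so $\tau - \Id$ is invertible there, and every element is summable with explicit primitive $x^n/(q^n - 1)$. The identical argument applies to $x^{-1}C[x^{-1}]$, where the eigenvalues are $q^{-n} - 1 \neq 0$. On the constant subspace $C$, however, $\tau - \Id$ acts as zero, so the constant part $c$ of $f$ is summable within this summand only when $c = 0$; the invariant decomposition guarantees there is no other way to hit this part either. This forces $\qres(f,\infty) = c = 0$.

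The substantive step is the analysis of each $V_{[\beta]_q,j}$. Writing a candidate primitive as $g = \sum_\ell a_\ell e_\ell$ with finite support, I compute
\begin{equation*}
(\tau - \Id)\Bigl(\sum_\ell a_\ell e_\ell\Bigr) = \sum_m (q^{-j}a_{m+1} - a_m)\,e_m,
\end{equation*}
so matching coefficients against the target $\sum_m c_j(f,q^m\beta)\,e_m$ yields the first-order linear recurrence $a_{m+1} = q^j\bigl(a_m + c_j(f,q^m\beta)\bigr)$. Assuming the target has support in $[L,R]$, finite support on the left forces $a_L = 0$; iterating the recurrence forward then gives
\begin{equation*}
a_{R+1} = q^{j(R+1)}\sum_{m=L}^{R} q^{-jm}\,c_j(f,q^m\beta),
\end{equation*}
and finite support on the right forces this sum to vanish, which is precisely the condition $\qres(f,[\beta]_q,j) = 0$. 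Conversely, when this residue vanishes the recurrence closes up, producing an explicit $g \in V_{[\beta]_q,j}$ with $(\tau-\Id)g$ equal to the target.

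Assembling these pieces, $f$ is summable if and only if every summand of $f$ in the invariant decomposition is, which occurs precisely when $\qres(f,\infty) = 0$ and $\qres(f,[\beta]_q,j) = 0$ for every nonzero $q^{\mathbb{Z}}$-orbit and every order $j$. The main technical point is the recurrence analysis on $V_{[\beta]_q,j}$, which is where the shape of the residue $\sum_\ell q^{-\ell j} c_j(f,q^\ell\beta)$ (with the specific weight $q^{-\ell j}$) comes from; everything else is an eigenvalue count. A minor bookkeeping obstacle is confirming that the decomposition~\eqref{eq:qdresPFD} is unique up to the choice of $\mathcal{R}$, so that the notion of $q$-discrete residue is well-defined on $f$ rather than on a particular expression of $f$; this follows from the fact that the singular behavior at each pole pins down the coefficients $c_j(f,\alpha)$ canonically, and the splitting into constant, positive, and negative powers of $x$ is then forced by comparing orders at $\infty$ and $0$.
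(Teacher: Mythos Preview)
The paper does not supply its own proof of this theorem; it is quoted verbatim as background from \cite[Proposition~2.10]{Chen2012}, so there is nothing in the paper to compare against. Your argument is correct and essentially matches the standard proof: the $\tau$-invariant direct-sum decomposition reduces the problem to each summand, the Laurent-polynomial pieces are handled by the eigenvalue count (using that $q$ is not a root of unity), and the recurrence on $V_{[\beta]_q,j}$ is exactly how the weight $q^{-\ell j}$ in the definition of the $q$-discrete residue arises. One small remark: the $q$-discrete residue $\mathrm{qres}(f,[\beta]_q,j)$ itself depends on the chosen representative $\beta$ up to a nonzero scalar $q^{kj}$, so only its \emph{vanishing} is intrinsic to the orbit; you allude to this in your final paragraph, and it is all that the theorem needs.
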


This shows that even in a case as simple as rational functions, the summability obstruction for a difference automorphism can become quite involved very quickly. In future chapters we will see the technology of residues ramping up in complexity, but this technology proves to be an effective obstruction to summability in the elliptic context as well as that of rational functions.

\chapter{Summability in the Analytic Setting}
\label{c:analytic}

Now that we have seen examples of residues in the rational case, we can work to adapt them to the elliptic case. This is most easily done from the analytic point of view of doubly periodic meromorphic functions on $\mathbb{C}$. The wealth of information about explicit functions in this context lets us choose from the corpus a particular function, the Weierstrass $\zeta$-function, with which to construct an analogue for the partial fraction decompositions utilized in Chapter~\ref{c:basicResidues}.

\section{Review of Elliptic Functions}
\label{s:analysisReview}

The initial formulation of elliptic curves came from the study of doubly-periodic meromorphic functions on the complex plane. Jacobi proved in 1835 in \cite{Jacobi1835} that triply-periodic meromorphic functions in the complex plane cannot exist without being constant, so doubly-periodic is the most rigid behavior a nonconstant periodic function on $\mathbb{C}$ can take. We call these doubly-periodic complex functions \defemph{elliptic functions}.

Myriad authors have written and rewritten wonderful manuscripts about elliptic functions: no result in this section are new. When left uncited, a result in this section can be found in one of \cite[Chapter~VI]{Silverman2009} or \cite[Chapter~I]{silvermanAdvancedTopicsArithmetic1994}. Any result not explicitly stated in either book will come with a precise citation. For those interested in further edification, \cite{Kirwan1992} and \cite{diamondFirstCourseModular2005} have been a source of inspiration.

Let $\lambda_1,\lambda_2\in\mathbb{C}$ be two nonzero numbers such that $\lambda_1/\lambda_2\not\in\mathbb{R}$.\footnote{By precomposing with a rotation and scaling, one may assume without loss of generality that $\lambda_1=1$ and $\lambda_2$ has positive imaginary part. This assumption can make computations easier in certain contexts.} These two numbers additively generate a lattice
\[\Lambda = \{n\lambda_1+m\lambda_2:n,m\in\mathbb{Z}\}\subset \mathbb{C}.\]
Connecting the points in the lattice via the lines determined by these periods tessellates $\mathbb{C}$ into parallelograms, as shown in Figure~\ref{f:fundParallel}.

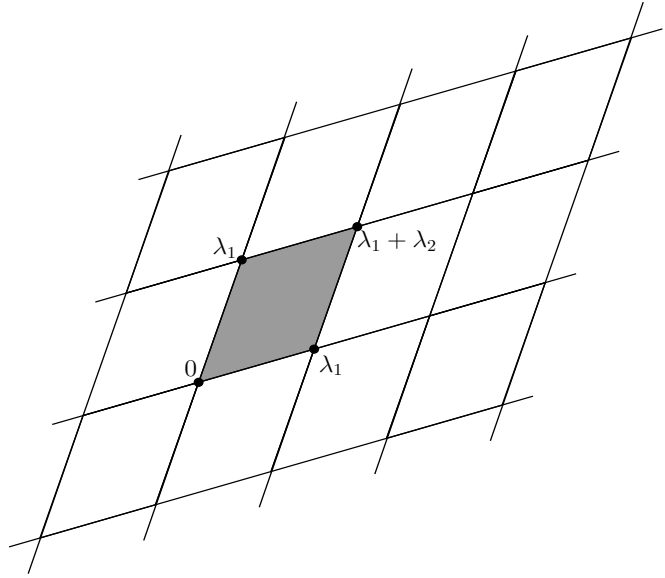
\begin{figure}
  \centering

\tikzset{every picture/.style={line width=0.75pt}}
\resizebox{3.5in}{3in}{
\begin{tikzpicture}[x=0.7pt,y=0.7pt,yscale=-1,xscale=1]

\draw  [draw opacity=0][fill={rgb, 255:red, 155; green, 155; blue, 155 }  ,fill opacity=1 ] (243.9,223.07) -- (335.68,194.51) -- (301.42,299.85) -- (209.65,328.41) -- cycle ;
\draw  [color={rgb, 255:red, 0; green, 0; blue, 0 }  ,draw opacity=1 ] (335.68,194.5) -- (427.45,165.93) -- (393.2,271.27) -- (301.42,299.84) -- cycle ;
\draw  [color={rgb, 255:red, 0; green, 0; blue, 0 }  ,draw opacity=1 ] (427.46,165.93) -- (519.23,137.36) -- (484.97,242.7) -- (393.2,271.27) -- cycle ;
\draw  [color={rgb, 255:red, 0; green, 0; blue, 0 }  ,draw opacity=1 ] (152.13,251.64) -- (243.9,223.08) -- (209.64,328.42) -- (117.87,356.98) -- cycle ;
\draw  [color={rgb, 255:red, 0; green, 0; blue, 0 }  ,draw opacity=1 ] (186.39,146.29) -- (278.16,117.73) -- (243.91,223.07) -- (152.13,251.63) -- cycle ;
\draw  [color={rgb, 255:red, 0; green, 0; blue, 0 }  ,draw opacity=1 ] (278.17,117.72) -- (369.94,89.15) -- (335.68,194.49) -- (243.91,223.06) -- cycle ;
\draw  [color={rgb, 255:red, 0; green, 0; blue, 0 }  ,draw opacity=1 ] (369.94,89.15) -- (461.72,60.58) -- (427.46,165.92) -- (335.69,194.49) -- cycle ;
\draw  [color={rgb, 255:red, 0; green, 0; blue, 0 }  ,draw opacity=1 ] (461.72,60.58) -- (553.49,32.01) -- (519.24,137.35) -- (427.46,165.91) -- cycle ;
\draw  [color={rgb, 255:red, 0; green, 0; blue, 0 }  ,draw opacity=1 ] (117.86,357) -- (209.63,328.43) -- (175.38,433.77) -- (83.61,462.34) -- cycle ;
\draw  [color={rgb, 255:red, 0; green, 0; blue, 0 }  ,draw opacity=1 ] (209.64,328.42) -- (301.41,299.86) -- (267.15,405.2) -- (175.38,433.76) -- cycle ;
\draw  [color={rgb, 255:red, 0; green, 0; blue, 0 }  ,draw opacity=1 ] (301.42,299.85) -- (393.19,271.28) -- (358.93,376.62) -- (267.16,405.19) -- cycle ;
\draw  [color={rgb, 255:red, 0; green, 0; blue, 0 }  ,draw opacity=1 ] (393.19,271.28) -- (484.97,242.71) -- (450.71,348.05) -- (358.94,376.62) -- cycle ;
\draw    (287.5,87) -- (243.91,223.06) ;
\draw    (379.26,58.45) -- (335.68,194.51) ;
\draw    (471.04,29.87) -- (427.45,165.93) ;
\draw    (562.82,1.3) -- (519.23,137.36) ;
\draw    (195.72,115.58) -- (152.13,251.64) ;
\draw    (117.87,356.98) -- (74.28,493.04) ;
\draw    (209.63,328.43) -- (166.05,464.49) ;
\draw    (301.41,299.86) -- (257.82,435.92) ;
\draw    (393.19,271.28) -- (349.6,407.34) ;
\draw    (484.97,242.71) -- (441.38,378.77) ;
\draw    (393.2,271.27) -- (509.5,235) ;
\draw    (358.94,376.62) -- (475.24,340.35) ;
\draw    (427.46,165.91) -- (543.76,129.65) ;
\draw    (461.72,60.58) -- (578.02,24.32) ;
\draw    (161.86,153.99) -- (278.16,117.73) ;
\draw    (127.6,259.34) -- (243.9,223.08) ;
\draw    (93.35,364.68) -- (209.65,328.41) ;
\draw    (59.08,470.03) -- (175.38,433.76) ;
\draw  [fill={rgb, 255:red, 0; green, 0; blue, 0 }  ,fill opacity=1 ] (240.48,223.08) .. controls (240.48,221.19) and (242.01,219.66) .. (243.9,219.66) .. controls (245.79,219.66) and (247.32,221.19) .. (247.32,223.08) .. controls (247.32,224.97) and (245.79,226.5) .. (243.9,226.5) .. controls (242.01,226.5) and (240.48,224.97) .. (240.48,223.08) -- cycle ;
\draw  [fill={rgb, 255:red, 0; green, 0; blue, 0 }  ,fill opacity=1 ] (332.27,194.49) .. controls (332.27,192.6) and (333.8,191.07) .. (335.69,191.07) .. controls (337.58,191.07) and (339.11,192.6) .. (339.11,194.49) .. controls (339.11,196.38) and (337.58,197.91) .. (335.69,197.91) .. controls (333.8,197.91) and (332.27,196.38) .. (332.27,194.49) -- cycle ;
\draw  [fill={rgb, 255:red, 0; green, 0; blue, 0 }  ,fill opacity=1 ] (206.22,328.43) .. controls (206.22,326.54) and (207.75,325.01) .. (209.63,325.01) .. controls (211.52,325.01) and (213.05,326.54) .. (213.05,328.43) .. controls (213.05,330.32) and (211.52,331.85) .. (209.63,331.85) .. controls (207.75,331.85) and (206.22,330.32) .. (206.22,328.43) -- cycle ;
\draw  [fill={rgb, 255:red, 0; green, 0; blue, 0 }  ,fill opacity=1 ] (297.99,299.86) .. controls (297.99,297.97) and (299.52,296.44) .. (301.41,296.44) .. controls (303.3,296.44) and (304.83,297.97) .. (304.83,299.86) .. controls (304.83,301.75) and (303.3,303.28) .. (301.41,303.28) .. controls (299.52,303.28) and (297.99,301.75) .. (297.99,299.86) -- cycle ;
\draw (196.76,308.29) node [anchor=north west][inner sep=0.75pt]    {{\large $0$}};
\draw (303.42,303.24) node [anchor=north west][inner sep=0.75pt]    {{\large $\lambda _{1}$}};
\draw (218.99,202.04) node [anchor=north west][inner sep=0.75pt]    {{\large $\lambda _{1}$}};
\draw (333.59,196.17) node [anchor=north west][inner sep=0.75pt]    {{\large $\lambda _{1} +\lambda _{2}$}};

\end{tikzpicture}
}

  \caption{A lattice $\Lambda$ with its fundamental parallelogram labeled and shaded}
  \label{f:fundParallel}
\end{figure}

Given a meromorphic function $f:\mathbb{C}\to\mathbb{C}$ with periods $\lambda_1$ and $\lambda_2$, its global behavior is completely determined by its behavior on any of these parallelograms. The parallelogram with vertices $0$, $\lambda_1$, $\lambda_2$, and $\lambda_1+\lambda_2$ is called a \defemph{fundamental parallelogram} of the lattice $\Lambda$. Identifying opposite sides of this parallelogram lets us view it as a quotient group (indeed, a torus!) $\mathbb{C}/\Lambda$, so $f$ acting on $\mathbb{C}$ induces a meromorphic function on this torus.

\begin{remark}
    More precisely, a fundamental parallelogram is any translation of the above parallelogram by an element of $\mathbb{C}$.
\end{remark}

A meromorphic function with periods $\lambda_1$ and $\lambda_2$ is called \defemph{elliptic} with respect to $\Lambda$. The collection of elliptic functions with respect to $\Lambda$ forms a field under addition and multiplication, and we denote this field by $\fieldC$.

Any nonconstant elliptic function $f\in\fieldC$ cannot be entire, because its double-periodicity would imply that it is bounded, producing a contradiction to Liouville's Theorem. Therefore there must exist poles of such a function. With some complex contour integration one can prove that the zeros and poles of $f$ must play a balancing game within the fundamental parallelogram. This result is expressed in terms of the usual orders and residues\footnote{There is rhetorical incongruency between the usage of ``residue'' to refer to objects that detect summability, which are ultimately numbers extracted from a representation of a function, and the usage of ``residue'' in the usual complex analytic sense, which are properly speaking attached to differential forms rather than functions. Throughout this dissertation we will explicitly say when we mean the latter. The reader should assume otherwise that we mean the former.} of complex analysis.

\begin{theorem}
\label{thm:balancingGame}
Let $f\in\fieldC$ be elliptic, and let $D$ be a fundamental parallelogram of $\lambda$. Then:
\begin{enumerate}[label=(\alph*)]
\item $\sum\limits_{a\in D} \text{res}_a(f) = 0$,
\item $\sum\limits_{a\in D} \text{ord}_a(f)=0$,
\item $\sum\limits_{a\in D} a\cdot\text{ord}_a(f)\in\Lambda$.
\end{enumerate}
\end{theorem}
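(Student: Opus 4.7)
The plan is to prove all three statements by integrating carefully chosen meromorphic $1$-forms around the boundary of a (suitably translated) fundamental parallelogram $D$ and exploiting the double-periodicity of $f$. By translating $D$ by a small generic element of $\mathbb{C}$ if necessary, we may assume without loss of generality that $f$ has no zeros or poles on $\partial D$; this does not change any of the sums on the interior since everything is $\Lambda$-periodic.

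For part (a), I would integrate $f(z)\,dz$ around $\partial D$, traversed counterclockwise. The boundary consists of four segments, pairing into two opposite sides separated by $\lambda_1$ and two opposite sides separated by $\lambda_2$. On each pair, the integrand at corresponding points agrees by the periodicity $f(z+\lambda_i)=f(z)$, but the orientations are opposite, so the contributions cancel and the total integral vanishes. By the residue theorem this integral also equals $2\pi i \sum_{a\in D}\operatorname{res}_a(f)$, giving (a).

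For part (b), I would run the same argument with the logarithmic derivative $f'(z)/f(z)\,dz$. This form is also $\Lambda$-periodic (since $f$ is), so the boundary integral vanishes for exactly the same pairing reason. The argument principle identifies the integral with $2\pi i \sum_{a\in D}\operatorname{ord}_a(f)$, giving (b).

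For part (c), the natural form is $z\cdot f'(z)/f(z)\,dz$, whose sum of residues over $D$ is $\sum_{a\in D} a\cdot\operatorname{ord}_a(f)$. Here the opposite sides of $\partial D$ no longer cancel because the factor $z$ is not periodic; however, on a pair of opposite sides differing by $\lambda_i$, the difference of the integrands at corresponding points is exactly $\lambda_i \cdot f'(z)/f(z)$. Hence the contribution from that pair reduces to $-\lambda_i$ times the integral of $f'/f$ along a single side of the parallelogram. Since $f'/f$ is periodic, this side-integral equals $2\pi i\, n_i$ for some integer $n_i$ (it counts the winding number of $f$ along that side, equivalently the change in $\log f$). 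Summing the two pairs, the total boundary integral is $2\pi i(n_2\lambda_1 - n_1\lambda_2)$, which lies in $2\pi i\,\Lambda$. Dividing by $2\pi i$ gives (c). The main obstacle is the bookkeeping for (c): keeping track of signs, orientation of the boundary, and the correct pairing of opposite sides so that the winding-number argument produces an honest lattice element rather than just a $\mathbb{Z}$-linear combination with wrong coefficients. Everything else reduces to standard applications of the residue theorem and the argument principle.
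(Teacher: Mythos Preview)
Your proof is correct and is precisely the classical contour-integration argument. The paper does not supply its own proof of this theorem; it is stated as a standard background result with a blanket citation to \cite[Chapter~VI]{Silverman2009} and \cite[Chapter~I]{silvermanAdvancedTopicsArithmetic1994}, and the argument you have written is exactly the one found in those references.
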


Items (b) and (c) respectively say that the number of zeros and poles within $D$ are the same counting multiplicity, and that the sum of the zeros and poles within $D$ is $0\pmod{\Lambda}$ counting multiplicity. We have an immediate corollary of items (a) and (b):

\begin{corollary}
\label{cor:2poles}
    A nonconstant elliptic function has at least two poles in $\mathbb{C}/\Lambda$.
\end{corollary}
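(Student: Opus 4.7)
The plan is to proceed by contradiction. Suppose $f\in\fieldC$ is nonconstant with at most one pole (counted with multiplicity) in a fundamental parallelogram $D$. The argument then splits into two cases depending on whether $f$ has zero poles in $D$ or exactly one simple pole.

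In the no-pole case, $f$ would extend to a holomorphic function on all of $\mathbb{C}$ by double periodicity. Since the closure of $D$ is compact and $f$ is continuous there, $f$ would be bounded on the closure of $D$, and then bounded on all of $\mathbb{C}$ by periodicity. Liouville's theorem would then force $f$ to be constant, contradicting the hypothesis.

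In the single-simple-pole case, suppose $a_0\in D$ is the only pole of $f$ in $D$ and has order $1$. Theorem~\ref{thm:balancingGame}(a) gives $\sum_{a\in D}\text{res}_a(f)=0$, and since $a_0$ is the only point contributing a potentially nonzero residue, this forces $\text{res}_{a_0}(f)=0$. But by definition a simple pole has nonzero residue --- a contradiction.

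There is no real technical obstacle here: the proof is essentially bookkeeping, and the substantive work was done in establishing Theorem~\ref{thm:balancingGame} via contour integration. I would note that item (a) together with Liouville's theorem is what truly does the work in this particular consequence, with the balance of zeros versus poles from item (b) and the weighted location from item (c) playing no role in the pole count itself --- those items will become more relevant later when we need finer control over where poles and zeros can sit inside $D$.
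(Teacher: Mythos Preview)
Your proof is correct and matches the paper's approach: the paper does not spell out a proof but simply labels the result an immediate consequence of items (a) and (b) of Theorem~\ref{thm:balancingGame}, having already noted just before that theorem (via Liouville) that a nonconstant elliptic function cannot be entire. Your observation that item (a) together with Liouville suffices, with (b) and (c) playing no role, is accurate and even slightly sharpens the paper's attribution.
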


This is in fact where a major obstacle arises in adapting previous obstructions of summability to the elliptic case. In Chapter~\ref{c:basicResidues} we saw examples that utilize the partial fraction decomposition of rational functions. A critical feature of the field of rational functions is the existence of rational functions with only one simple pole. In the elliptic case such functions are nonexistent. This means that our analogue for partial fraction decompositions should have one of the following two properties:

\begin{enumerate}[label=\arabic*.]
\item It must be in terms of elliptic functions with two or more poles in $\mathbb{C}/\Lambda$, or
\item It must be in terms of functions that are \emph{not} elliptic, but are sufficiently related to $\Lambda$.
\end{enumerate}

Each strategy presents its own difficulties, but a commonality is that we no longer have easily accessible prototypes of functions with a small number of poles in $\mathbb{C}/\Lambda$.

\subsection{The Weierstrass Elliptic Functions}

The first literary example of a nonconstant elliptic function is quite commonly the Weierstrass $\wp$ function. This function is manually constructed to have poles of order two at precisely the lattice points in $\Lambda$, and the value is given via a series carefully constructed to satisfy convergence wherever it is defined.

\begin{definition}
The \defemph{Weierstrass elliptic function} The function $\wp_{\Lambda}(z):\mathbb{C}\to\mathbb{C}$ associated with $\Lambda$ is defined by the following series:
\[\wp_{\Lambda}(z) = \frac{1}{z^2}+\sum\limits_{\substack{\lambda\in\Lambda \\ \lambda\neq 0}} \left(\frac{1}{(z-\lambda)^2} - \frac{1}{\lambda^2}\right).\]
\end{definition}

It is a classical result that $\wp_{\Lambda}(z)\in \fieldC$. Furthermore the series definition of $\wp_{\Lambda}(z)$ can be differentiated to obtain a series expression for $\wp_{\Lambda}'(z)$, another elliptic function. These two functions in fact satisfy a polynomial relationship:

\begin{proposition}\label{prop:wpCubic}
For all $z\in\mathbb{C}\backslash\Lambda$,
\[\wp_{\Lambda}'(z)^2 = 4\wp_{\Lambda}(z)^3 - g_2(\Lambda)\wp_{\Lambda}(z)-g_3(\Lambda)\]
where the constants $g_2(\Lambda)$ and $g_3(\Lambda)$ are given by the following Eisenstein series:
\[g_2(\Lambda)=60\sum\limits_{\substack{\lambda\in\Lambda \\[0.2em] \lambda\neq 0}} \frac{1}{\lambda^2},\qquad g_3(\Lambda)=140\sum\limits_{\substack{\lambda\in\Lambda \\[0.2em] \lambda\neq 0}} \frac{1}{\lambda^3}.\]
\end{proposition}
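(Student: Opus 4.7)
The plan is to prove this classical identity by comparing Laurent expansions of both sides at $z=0$ and then invoking Liouville's theorem for elliptic functions. Concretely, the strategy is to show that the ``defect'' $F(z) := \wp_\Lambda'(z)^2 - 4\wp_\Lambda(z)^3 + g_2(\Lambda)\wp_\Lambda(z) + g_3(\Lambda)$ is an elliptic function that is holomorphic at $z=0$ with $F(0)=0$; periodicity then forces $F$ to be entire, Corollary~\ref{cor:2poles} (or Liouville) forces $F$ to be constant, and the value $F(0)=0$ finishes the proof.

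First I would derive the Laurent expansion of $\wp_\Lambda$ near $z=0$. For $0 < |z| < \min_{\lambda\neq 0}|\lambda|$, the geometric series gives $(z-\lambda)^{-2} - \lambda^{-2} = \sum_{k\geq 1}(k+1)z^k/\lambda^{k+2}$; summing over nonzero $\lambda\in\Lambda$ and interchanging the order of summation (justified by absolute convergence, exactly as in the original convergence argument for $\wp_\Lambda$) produces
\[\wp_\Lambda(z) = \frac{1}{z^2} + \sum_{m\geq 1}(2m+1)G_{2m+2}z^{2m}, \quad\text{where}\quad G_{2k} := \sum_{\lambda\in\Lambda\setminus\{0\}}\frac{1}{\lambda^{2k}},\]
the odd-indexed sums vanishing by the symmetry $\lambda\leftrightarrow -\lambda$. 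Differentiating termwise yields $\wp_\Lambda'(z) = -2z^{-3} + \sum_{m\geq 1}2m(2m+1)G_{2m+2}z^{2m-1}$. Since $g_2(\Lambda) = 60\,G_4$ and $g_3(\Lambda) = 140\,G_6$, the first few coefficients are $\wp_\Lambda(z) = z^{-2} + \tfrac{g_2}{20}z^2 + \tfrac{g_3}{28}z^4 + O(z^6)$ and $\wp_\Lambda'(z) = -2z^{-3} + \tfrac{g_2}{10}z + \tfrac{g_3}{7}z^3 + O(z^5)$.

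Next I would square $\wp_\Lambda'$ and cube $\wp_\Lambda$ through order $z^0$. A direct computation shows
\[\wp_\Lambda'(z)^2 = 4z^{-6} - \tfrac{2g_2}{5}z^{-2} - \tfrac{4g_3}{7} + O(z^2), \qquad 4\wp_\Lambda(z)^3 = 4z^{-6} + \tfrac{3g_2}{5}z^{-2} + \tfrac{3g_3}{7} + O(z^2),\]
and subtracting $g_2\wp_\Lambda(z) + g_3 = g_2 z^{-2} + g_3 + O(z^2)$ from the latter gives exactly $4z^{-6} - \tfrac{2g_2}{5}z^{-2} - \tfrac{4g_3}{7} + O(z^2)$, matching $\wp_\Lambda'(z)^2$. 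Hence $F(z) = O(z^2)$ near $z=0$; in particular $F$ is holomorphic at $0$ and $F(0)=0$.

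Finally, because $\wp_\Lambda, \wp_\Lambda' \in \fieldC$ (the first by hypothesis, the second by differentiating the defining periodicity of $\wp_\Lambda$), $F$ is itself elliptic with respect to $\Lambda$. Its only possible poles lie in $\Lambda$, but by $\Lambda$-periodicity the holomorphy of $F$ at $0$ extends to every lattice point. Thus $F$ is entire and doubly periodic, hence bounded, hence constant by Liouville; combined with $F(0)=0$ this yields $F\equiv 0$ on $\mathbb{C}\setminus\Lambda$, which is the desired identity. The only real obstacle is the bookkeeping in the Laurent expansion step, namely keeping careful track of coefficients up to order $z^0$ in both $\wp_\Lambda'(z)^2$ and $4\wp_\Lambda(z)^3 - g_2\wp_\Lambda(z) - g_3$; once those match, the Liouville argument is automatic.
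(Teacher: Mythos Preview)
Your proof is correct and is the standard textbook argument. The paper itself does not supply a proof of this proposition: Section~\ref{s:analysisReview} explicitly states that uncited results there are classical and refers the reader to \cite[Chapter~VI]{Silverman2009} and \cite[Chapter~I]{silvermanAdvancedTopicsArithmetic1994}, where one finds exactly the Laurent-expansion-plus-Liouville argument you have written out.
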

This polynomial relationship leads to the \defemph{Weierstrass equation} $y^2=4x^3-g_2(\Lambda)x-g_3(\Lambda)$. We will see this sort of cubic equation again in the algebraic perspective. Indeed, Weierstrass equations are the \emph{starting point} of that direction of inquiry!

An incredible fact is that the two examples discussed thus far are all that is required to formulate the entire field $\fieldC$ of elliptic functions. That is, all elliptic functions in $\fieldC$ can be characterized simply using $\wp_{\Lambda}(z)$ and $\wp_{\Lambda}'(z)$ as building blocks. Precisely stated:

\begin{proposition}
Any $f\in \fieldC$ can be expressed uniquely as
\[A(\wp_{\Lambda}(z)) + \wp_{\Lambda}'(z)\cdot B(\wp_{\Lambda}(z)),\]
where $A$ and $B$ are rational functions over $\mathbb{C}$. Conversely, all such expressions are elliptic functions. \end{proposition}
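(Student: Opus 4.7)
The plan is to split $f$ into an even and an odd part, use the evenness of $\wp_{\Lambda}$ and oddness of $\wp_{\Lambda}'$ to reduce the entire problem to a structure theorem for the field of even elliptic functions, and then handle uniqueness separately via parity together with the transcendence of $\wp_{\Lambda}$ over $\mathbb{C}$. The converse direction is immediate: since $\wp_{\Lambda},\wp_{\Lambda}'\in\fieldC$ and $\fieldC$ is a field closed under polynomial composition, every expression of the form $A(\wp_{\Lambda})+\wp_{\Lambda}'\cdot B(\wp_{\Lambda})$ with $A,B\in\mathbb{C}(T)$ defines an element of $\fieldC$ wherever the denominators do not vanish.

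For the existence direction, I would first write $f = f_e + f_o$, where $f_e(z)=(f(z)+f(-z))/2$ and $f_o(z)=(f(z)-f(-z))/2$; both remain in $\fieldC$ since $\Lambda=-\Lambda$, and they are respectively even and odd in $z$. Because $\wp_{\Lambda}'$ is odd and not identically zero, the quotient $f_o/\wp_{\Lambda}'$ is again an even elliptic function. Hence the whole claim reduces to showing that every even $g\in\fieldC$ equals $R(\wp_{\Lambda})$ for some $R\in\mathbb{C}(T)$.

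For this core claim, I would work by pole reduction. Let $z_0\in\mathbb{C}/\Lambda$ be a pole of $g$ that is not the origin. By evenness, $-z_0$ is also a pole of the same order. Two cases arise. If $2z_0\notin\Lambda$, then $z_0$ and $-z_0$ are distinct mod $\Lambda$ and each is a pole of some common order $m$; the function $\wp_{\Lambda}(z)-\wp_{\Lambda}(z_0)$ has simple zeros at $\pm z_0$ (it has a double pole at the origin by Proposition 3.1.4's setup, and by Theorem 3.1.1(b) must have exactly two zeros counted with multiplicity in $\mathbb{C}/\Lambda$), so $(\wp_{\Lambda}(z)-\wp_{\Lambda}(z_0))^{m}g(z)$ is regular at $\pm z_0$. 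If instead $2z_0\in\Lambda$, then $\wp_{\Lambda}'(z_0)=-\wp_{\Lambda}'(-z_0)=-\wp_{\Lambda}'(z_0)$ gives $\wp_{\Lambda}'(z_0)=0$, so $\wp_{\Lambda}(z)-\wp_{\Lambda}(z_0)$ has a double zero at $z_0$; and locally $g(z_0+w)=g(z_0-w)$ (using $-2z_0\in\Lambda$ and periodicity), so the Laurent expansion of $g$ at $z_0$ involves only even powers of $w$, forcing the pole order to be even, say $2k$, and $(\wp_{\Lambda}(z)-\wp_{\Lambda}(z_0))^{k}g(z)$ is regular at $z_0$. Iterating over the finitely many non-origin pole orbits produces an even elliptic function with poles only on $\Lambda$. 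Finally, such a function $h$ has an even Laurent expansion at $0$ of the form $h(z)=cz^{-2n}+O(z^{-2n+2})$, and $h-c\wp_{\Lambda}^{n}$ has strictly smaller pole order at $0$; inducting gives $h\in\mathbb{C}[\wp_{\Lambda}]$, and dividing by the polynomials collected during the reduction gives $g\in\mathbb{C}(\wp_{\Lambda})$.

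For uniqueness, if $A_1(\wp_{\Lambda})+\wp_{\Lambda}'B_1(\wp_{\Lambda})=A_2(\wp_{\Lambda})+\wp_{\Lambda}'B_2(\wp_{\Lambda})$, then $(A_1-A_2)(\wp_{\Lambda})=\wp_{\Lambda}'\cdot(B_2-B_1)(\wp_{\Lambda})$; the left side is even in $z$ and the right side is odd, so both vanish identically. Since $\wp_{\Lambda}$ is non-constant meromorphic, a polynomial annihilating $\wp_{\Lambda}$ as a function would have infinitely many roots and hence be zero, so $\wp_{\Lambda}$ is transcendental over $\mathbb{C}$; this forces $A_1=A_2$ and $B_1=B_2$ in $\mathbb{C}(T)$. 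The step I expect to be most delicate is the pole-reduction at $2$-torsion points: one must verify both that an even function automatically has even-order poles there, and that $\wp_{\Lambda}-\wp_{\Lambda}(z_0)$ has a \emph{double} rather than simple zero so that the correct power kills the singularity. Once that bookkeeping is in place, the remaining steps (parity splitting, Laurent subtraction at the origin, uniqueness) are routine.
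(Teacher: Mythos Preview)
Your argument is correct and complete: the parity splitting, the pole-reduction for even functions (with careful handling of the $2$-torsion case), the Laurent subtraction at the origin, and the uniqueness via parity plus transcendence of $\wp_{\Lambda}$ all go through as written. Note, however, that the paper does not supply its own proof of this proposition; it is stated as background in Section~\ref{s:analysisReview} with a blanket citation to \cite[Chapter~VI]{Silverman2009} and \cite[Chapter~I]{silvermanAdvancedTopicsArithmetic1994}, and your proof is precisely the standard argument one finds there.
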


Consequently $\fieldC$ can be expressed as $\mathbb{C}(\wp_{\Lambda})[\wp_{\Lambda}']$.

For our purposes we require a function that is not actually elliptic. This function is still related to $\wp_{\Lambda}(z)$ in a way that facilitates a method of expression for all elliptic functions. Once again one can show that the series definition in fact converges on $\mathbb{C}\backslash\Lambda$.

\begin{definition}
    The \defemph{Weierstrass $\zeta$-function} $\zeta_{\Lambda}(z):\mathbb{C}\to\mathbb{C}$ associated with $\Lambda$ is defined by the following series:
    \[\zeta_{\Lambda}(z) = \frac{1}{z}+\sum\limits_{\substack{\lambda\in\Lambda \\[0.2em] \lambda\neq 0}} \left(\frac{1}{z-\lambda} + \frac{1}{\lambda}+ \frac{z}{\lambda^2}\right).\]
\end{definition}

\begin{remark}
    Because $\Lambda$ is taken to be fixed and arbitrary throughout this dissertation, we make the practice of dropping the subscript $\Lambda$ from both $\wp(z)$ and $\zeta(z)$.
\end{remark}

From the construction of $\zeta(z)$ there are several immediate observations.

\begin{proposition}
\label{prop:zetaProps}
The function $\zeta(z)$ satisfies the following properties:
    \begin{enumerate}[label=(\alph*)]
    \item $\zeta'(z)=-\wp(z)$.
    \item $\zeta(z)$ has only poles of order $1$ located at the lattice points in $\Lambda$.
    \item There exists an additive group homomorphism $\eta:\Lambda\to\mathbb{C}$ associated to $\Lambda$ called the \defemph{quasi-period map} that satisfies
    \[\zeta(z+\lambda) = \zeta(z) + \eta(\lambda)\]
    for all $\lambda\in\Lambda$.
    \end{enumerate}
\end{proposition}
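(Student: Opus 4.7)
The plan is to verify the three properties in sequence, each building on the previous one. First, for part (a), I would differentiate the defining series of $\zeta(z)$ termwise: each summand $\frac{1}{z-\lambda} + \frac{1}{\lambda} + \frac{z}{\lambda^2}$ has derivative $-\frac{1}{(z-\lambda)^2} + \frac{1}{\lambda^2}$, and the leading $\frac{1}{z}$ contributes $-\frac{1}{z^2}$. Collecting these and matching against the definition of $\wp(z)$ yields $\zeta'(z) = -\wp(z)$. The justification for termwise differentiation is the standard fact that the series defining $\zeta(z)$ converges uniformly on compact subsets of $\mathbb{C}\setminus\Lambda$, which is the same underlying estimate used to establish convergence of the defining series in the first place.

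For part (b), I would examine the local behavior of the series near any fixed $\lambda_0 \in \Lambda$. Only the single summand corresponding to $\lambda_0$ (or the leading $\frac{1}{z}$ term if $\lambda_0 = 0$) is singular at $z = \lambda_0$; every other summand is holomorphic in a neighborhood of $\lambda_0$, and the uniform convergence just cited makes the remaining tail holomorphic there as well. Thus near $\lambda_0$ the function $\zeta(z)$ differs from $\frac{1}{z-\lambda_0}$ by a holomorphic function, which is precisely the assertion that $\zeta(z)$ has a simple pole of residue $1$ at each $\lambda_0 \in \Lambda$ and is holomorphic elsewhere.

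The main substance lies in part (c). The key observation is that since $\zeta'(z) = -\wp(z)$ is elliptic with respect to $\Lambda$, for any $\lambda \in \Lambda$ the function $g_\lambda(z) := \zeta(z+\lambda) - \zeta(z)$ satisfies
\[
g_\lambda'(z) = -\wp(z+\lambda) + \wp(z) = 0
\]
on the set $\mathbb{C}\setminus(\Lambda \cup (\Lambda - \lambda))$, which is connected since we are removing a discrete set from $\mathbb{C}$. Hence $g_\lambda$ is a constant on this set, and I would define $\eta(\lambda)$ to be this constant; the identity $\zeta(z+\lambda)=\zeta(z)+\eta(\lambda)$ then extends to an equality of meromorphic functions on all of $\mathbb{C}$. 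Additivity of $\eta$ is then a telescoping computation:
\begin{align*}
\eta(\lambda_1 + \lambda_2) &= \zeta(z+\lambda_1+\lambda_2) - \zeta(z) \\
&= \bigl[\zeta(z+\lambda_1+\lambda_2) - \zeta(z+\lambda_2)\bigr] + \bigl[\zeta(z+\lambda_2) - \zeta(z)\bigr] \\
&= \eta(\lambda_1) + \eta(\lambda_2),
\end{align*}
where in the third line I apply the defining identity of $\eta(\lambda_1)$ with $z$ replaced by $z+\lambda_2$.

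The only potential subtleties to flag are the legitimacy of termwise differentiation in part (a) and the connectedness argument needed in part (c) to pass from ``zero derivative'' to ``constant''; both are minor but are the precise places where one must invoke something beyond formal manipulation of the series.
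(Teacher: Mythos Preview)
Your proof is correct and follows the standard argument. The paper itself does not supply a proof of this proposition: it treats the result as classical, remarking at the outset of the section that uncited results can be found in \cite[Chapter~VI]{Silverman2009} or \cite[Chapter~I]{silvermanAdvancedTopicsArithmetic1994}, and the text immediately following the proposition simply comments on its consequences. Your argument is essentially the one found in those references. One cosmetic point: in part~(c) you remove $\Lambda\cup(\Lambda-\lambda)$, but since $\lambda\in\Lambda$ this is just $\Lambda$, so the set is simply $\mathbb{C}\setminus\Lambda$.
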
 \medskip

The property (a) suggests that the Weierstrass $\zeta$-function can be used to construct all elliptic functions as well. Property (b) and Corollary~\ref{cor:2poles} confirm that $\zeta(z)$ is indeed not an elliptic function. Property (c) will be useful for computations that follow. It also implies the following fact that is crucial for later steps in our reduction process.

\begin{corollary}
\label{cor:zetaDifferenceIsElliptic}
    For any constant $\alpha\in\mathbb{C}$, the function $\zeta(z+\alpha) - \zeta(z)$ is an elliptic function.
\end{corollary}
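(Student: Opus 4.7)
The plan is to verify directly that $\zeta(z+\alpha) - \zeta(z)$ is both meromorphic and doubly periodic with respect to $\Lambda$; since the only thing preventing $\zeta(z)$ itself from being elliptic is its quasi-periodicity behavior described in Proposition~\ref{prop:zetaProps}(c), the key idea is that this obstruction cancels out upon taking this difference.

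First I would note that meromorphy is immediate: $\zeta(z+\alpha) - \zeta(z)$ is the difference of two meromorphic functions on $\mathbb{C}$ (with simple poles on the translated lattices $-\alpha+\Lambda$ and $\Lambda$ respectively). The work is in checking $\Lambda$-periodicity: for any $\lambda\in\Lambda$, I need to show
\[
\zeta\bigl((z+\lambda)+\alpha\bigr) - \zeta(z+\lambda) = \zeta(z+\alpha) - \zeta(z).
\]

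The main step is to apply the quasi-periodicity relation $\zeta(w+\lambda) = \zeta(w) + \eta(\lambda)$ of Proposition~\ref{prop:zetaProps}(c) twice, once with $w=z+\alpha$ and once with $w=z$. Substituting these into the left-hand side, the $\eta(\lambda)$ terms cancel, yielding exactly the right-hand side. This proves double-periodicity with respect to every $\lambda\in\Lambda$, so $\zeta(z+\alpha)-\zeta(z)\in\fieldC$.

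There is no real obstacle here; the corollary is essentially a bookkeeping consequence of the fact that the quasi-period map $\eta$ depends only on $\lambda\in\Lambda$ and not on the point at which $\zeta$ is evaluated, so it drops out of any $\Lambda$-invariant difference of shifts of $\zeta$. The only conceptual subtlety worth flagging in the writeup is that $\alpha$ need not lie in $\Lambda$—in fact the interesting case for later applications will be $\alpha\notin\Lambda$, where neither $\zeta(z+\alpha)$ nor $\zeta(z)$ is elliptic individually, yet their difference is. This will be the mechanism by which $\zeta$ plays the role in $\fieldC$ that the simple-pole partial fractions $1/(x-\alpha)$ played in $C(x)$.
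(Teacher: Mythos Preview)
Your proof is correct and follows essentially the same approach as the paper: both apply the quasi-periodicity relation $\zeta(w+\lambda)=\zeta(w)+\eta(\lambda)$ from Proposition~\ref{prop:zetaProps}(c) to each term and observe that the $\eta(\lambda)$ contributions cancel, yielding $\Lambda$-periodicity.
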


\begin{proof}
    The function is $\lambda$-periodic for all $\lambda\in\Lambda$:
    \begin{align*}
        & \zeta(z+\lambda+\alpha) - \zeta(z+\lambda)\\
        = \ & \big[\zeta(z+\alpha)-\zeta(z)\big] + \eta(\lambda) - \eta(\lambda) \\
        = \ & \zeta(z+\alpha)-\zeta(z).
    \end{align*}
\end{proof}

\subsection{$\zeta$-Expansions of Elliptic Functions}
\label{s:zetaExp}

We recall that our immediate goal is to find an analogue for the partial fraction decompositions used in Chapter~\ref{c:basicResidues} that enables us to determine summability. Those partial fraction decompositions were expressed in terms of functions with poles of order $1$, so there should also exist expressions for elliptic functions using the Weierstrass $\zeta$-function.

One view of a partial fraction decomposition over $\mathbb{C}$ is that it is an aggregation of all local expansions of a rational function. An expression for an arbitrary $f\in \fieldC$ in terms of $\zeta(z)$ could probably be constructed by using the series expansion of $\zeta(z)$ to mimic local expansions at all poles of $f$, and then aggregating them in some way to produce the function $f$ in a way that guarantees the expression to be elliptic.

Because $\zeta(z)$ follows quasiperiodic behavior as per Proposition~\ref{prop:zetaProps}, the expressions in terms of $\zeta(z)$ are highly nonunique. This will be a mainstay of computations to follow.

\begin{proposition}
\label{prop:zetaExp}
    Let $f(z)\in\fieldC$ be any elliptic function, and let $\{\alpha_1,\dots,\alpha_n\}$ be a set of poles of $f(z)$ such that no two are equivalent modulo $\Lambda$, but any pole of $f(z)$ is equivalent to $\alpha_k$ modulo $\Lambda$ for some $1\leq k\leq n$. Then there exist unique constants $c_0(f)$ and $c_j(f,\alpha_k)$ for $1\leq k\leq n$ and $j\geq 1$ such that $f(z)$ can be expressed as
    \begin{equation}
        \label{eq:zetaExp}
            f(z) = c_0(f)+\sum\limits_{k=1}^n\sum\limits_{j\geq 1}\frac{(-1)^{j-1}c_j(f,\alpha_k)}{(j-1)!}\zeta^{(j-1)}(z-\alpha_k).
    \end{equation}
    We call this a \defemph{$\zeta$-expansion} of $f(z)$.
\end{proposition}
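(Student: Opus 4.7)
The plan is to construct the claimed $\zeta$-expansion as the unique elliptic function sharing the principal parts of $f$ at each pole modulo $\Lambda$, modulo an additive constant. First, I would record the local behavior of the building blocks. Near $z=0$ the Weierstrass $\zeta$-function satisfies $\zeta(z)=1/z+O(z)$, so term-by-term differentiation yields $\zeta^{(j-1)}(z) = (-1)^{j-1}(j-1)!/z^j + \text{(analytic)}$. Consequently, the normalized term $\frac{(-1)^{j-1}}{(j-1)!}\zeta^{(j-1)}(z-\alpha_k)$ has principal part exactly $1/(z-\alpha_k)^j$ at $\alpha_k$ (and, by quasi-periodicity of $\zeta$, at every $\Lambda$-translate of $\alpha_k$), while being analytic elsewhere modulo $\Lambda$. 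I would then \emph{define} the candidate coefficients $c_j(f,\alpha_k)$ as the Laurent coefficients of $1/(z-\alpha_k)^j$ in the expansion of $f$ at $\alpha_k$; these vanish for $j$ larger than the order of the pole, so each inner sum is finite.

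Next, set
\[
g(z) = \sum_{k=1}^{n}\sum_{j\geq 1}\frac{(-1)^{j-1} c_j(f,\alpha_k)}{(j-1)!}\,\zeta^{(j-1)}(z-\alpha_k).
\]
I need to verify $g\in\fieldC$. For $j\geq 2$, $\zeta^{(j-1)}(z-\alpha_k)$ is a derivative of $-\wp(z-\alpha_k)$ and is therefore already elliptic, so those terms cause no trouble. The only obstruction lies in the $j=1$ summands $\sum_{k} c_1(f,\alpha_k)\,\zeta(z-\alpha_k)$, where each individual summand is merely quasi-periodic with quasi-period character $\eta$ from Proposition~\ref{prop:zetaProps}. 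Here I invoke Theorem~\ref{thm:balancingGame}(a): the sum of complex-analytic residues of $f$ over a fundamental parallelogram is zero, which gives $\sum_{k=1}^n c_1(f,\alpha_k)=0$. Rewriting
\[
\sum_{k=1}^{n} c_1(f,\alpha_k)\,\zeta(z-\alpha_k) = \sum_{k=1}^{n} c_1(f,\alpha_k)\bigl(\zeta(z-\alpha_k)-\zeta(z)\bigr),
\]
each bracketed difference is elliptic by Corollary~\ref{cor:zetaDifferenceIsElliptic}, so the whole sum $g$ is elliptic.

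By construction $g$ has the same principal part as $f$ at each $\alpha_k$, and hence at every point of $\alpha_k+\Lambda$ (because both $f$ and $g$ are $\Lambda$-periodic). Therefore $f-g$ is an entire elliptic function; Liouville's theorem forces $f-g$ to be a constant, which I name $c_0(f)$, establishing existence of the expansion~\eqref{eq:zetaExp}. For uniqueness, suppose two such expansions agree. Comparing Laurent coefficients at each $\alpha_k$ pins down every $c_j(f,\alpha_k)$ for $j\geq 1$ (since only the $k$th block contributes a pole at $\alpha_k$, with the explicit normalization above), and then $c_0(f)$ is forced as the remaining constant. The main delicate point is the ellipticity check in the second step: it would have failed for any other normalization of the principal parts, and it works precisely because the residue theorem on the torus supplies the one linear relation among the $c_1(f,\alpha_k)$ needed to cancel the quasi-period drift.
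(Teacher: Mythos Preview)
Your proof is correct and follows essentially the same approach as the paper: define the $c_j(f,\alpha_k)$ as the principal-part coefficients, observe that the $j\geq 2$ terms are already elliptic, use Theorem~\ref{thm:balancingGame}(a) to force $\sum_k c_1(f,\alpha_k)=0$ so that the $j=1$ part is elliptic, and conclude that $f-g$ is an entire elliptic function hence constant. The only cosmetic difference is that the paper verifies ellipticity of $\sum_k c_1(f,\alpha_k)\zeta(z-\alpha_k)$ by a direct quasi-period computation via $\eta$, whereas you invoke Corollary~\ref{cor:zetaDifferenceIsElliptic} after subtracting the (zero) multiple of $\zeta(z)$; your uniqueness argument is also slightly more explicit than the paper's.
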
 \medskip \medskip

This is a classical result whose proof, while seldom recorded, can be found in \cite[pp.~449-450]{whittakerCourseModernAnalysis2006}. We recall the proof here.

\begin{proof}
    We show that such a $\zeta$-expansion can be constructed if we let $c_j(f,\alpha_k)$ be the coefficient of the order $j$ term of the principal part of $f(z)$ near $\alpha_k$, for $j\geq 1$ and $1\leq k\leq n$.

    For the terms on the right-hand side, if $j\geq 2$ then $\zeta^{(j-1)}(z-\alpha_k)$ is in fact elliptic with principal part having a single term that has a pole of order $j$. Thus if $c_j(f,\alpha_k)$ agrees with the coefficients of the principal part of $f$ near $\alpha_k$ for poles of order at least $2$, then the higher-order terms cancel and it suffices to assume that $f$ only has simple poles. In this case, we wish to show that there is a constant $c_0(f)$ such that
    \[f(z) = c_0(f) + \sum\limits_{k=1}^{n} c_1(f,\alpha_k)\zeta(z-\alpha_k).\]
    If $c_1(f,\alpha_k)$ agrees with the principal part of $f(z)$ near $\alpha_k$, then they are the classical residues of $f(z)$ from complex analysis, and from Theorem~\ref{thm:balancingGame} these sum to zero. Then the right-hand side is indeed elliptic: for any $\lambda\in\Lambda$,
    \begin{align*}
    \sum\limits_{k=1}^{n} c_1(f,\alpha_k)\zeta(z+\lambda - \alpha_k) &= \sum\limits_{k=1}^{n} c_1(f,\alpha_k)\cdot \eta(\lambda) + \sum\limits_{k=1}^{n} c_1(f,\alpha_k)\zeta(z-\alpha_k) \\
    &= 0 + \sum\limits_{k=1}^{n} c_1(f,\alpha_k)\zeta(z-\alpha_k).
    \end{align*}
    Therefore the difference
    \[f(z) - \sum\limits_{k=1}^{n} c_1(f,\alpha_k)\zeta(z-\alpha_k)\]
    is elliptic and entire, and therefore constant from Corollary~\ref{cor:2poles}. Thus such a constant $c_0(f)$ exists, finishing the proof.
\end{proof}

Proposition~\ref{prop:zetaExp} shows that the coefficients of all the local expansions of an elliptic function $f(z)$ can in fact be aggregated into a global expression for $f(z)$, mirroring Remark~\ref{rem:pfdIsGlobal}. This will be the sort of construct we will reduce to find a summability criterion for elliptic functions.

\begin{remark}
    Expressions of elliptic functions in terms of the Weierstrass $\zeta$-function exist sparsely in literature relative to the more favored expressions in terms of $\wp(z)$, or even as products of the Weierstrass $\sigma$-function (cf. \cite[Proposition~I.5.5]{silvermanAdvancedTopicsArithmetic1994}). These expressions focus more on algebraic data, whereas the analytic nature of Proposition~\ref{prop:zetaExp} has the advantage of the direct connection to local expansions of elliptic functions.
\end{remark}

\section{Finding a Summability Criterion}
\label{s:analyticSummability}

We have laid down all the necessary background on elliptic functions to discover a summability criterion in the analytic setting. Now we can formulate and solve this problem. After we place a difference algebra structure on $\fieldC$ and define preliminary terms, we describe a process of reducing the $\zeta$-expansion of an arbitrary $f(z)\in\fieldC$ by summable elements. As a result, $f(z)$ will be summable if and only if this reduction is zero.

Yet there is little benefit from a summability criterion that demands the computation of a reduction: it would be of much greater utility to have a criterion computable from $f(z)$ without needing to calculate any reductions. Thus from the reduction we extract particular coefficients that we will declare to be the analytic panorbital residues of $f(z)$. These coefficients will be the key ingredient in our criterion for summability.

\subsection{A Difference Structure on $\mathbb{C}/\Lambda$}
\label{ss:analyticDifferenceStructure}

When formulating the terminology in this subsection we borrow heavily from the work of \cite[Appendix~B]{Dreyfus2018} which formulates orbital residues of elliptic functions in the algebraic setting. What follows is an analytic reinterpretation.

We reaffirm that we are working with the elliptic functions $\fieldC$ that are periodic with respect to the lattice
\[\Lambda=\Span_{\mathbb{Z}}(\lambda_1,\lambda_2)\]
\noindent for a choice of $\lambda_1,\lambda_2\in\mathbb{C}^{\times}$ with $\lambda_1/\lambda_2\not\in\mathbb{R}$. The difference algebra structure of interest is induced by the \defemph{shift automorphism} $\shift$ on $\fieldC$ defined in the following way: for a fixed $\shiftnumber\in\mathbb{C}\backslash \mathbb{Q}\Lambda$, the automorphism $\shift$ acts on a function $f\in\fieldC$ by\footnote{It is easy to conflate the function $f$ being a function on $\mathbb{C}/\Lambda$ and being a function on $\mathbb{C}$. By declaring $\shiftnumber\in\mathbb{C}$, we are implicitly making the choice to view all functions in the following sections as being meromorphic on $\mathbb{C}$, and all elements of $\fieldC$ are considered to be $\Lambda$-periodic meromorphic functions on $\mathbb{C}$.}
\[\shift:f(z)\mapsto f(z+\shiftnumber).\]
The automorphism $\shift$ places a difference field structure on $\fieldC$. We denote the corresponding shift on the points of $\mathbb{C}$ by
\begin{align*}
    \shifttorus&:\mathbb{C}\to\mathbb{C}\\
    \shifttorus&:a\mapsto a-\shiftnumber
\end{align*}
for all $a\in\mathbb{C}$. Thus $\shift^{-1}(f(a))=f(\shifttorus(a))$ for all $a\in\mathbb{C}$ and $f\in\fieldC$. This implies that $\shift$ shifts the poles of an elliptic function in accordance with $\shifttorus$ shifting those poles on the complex plane $\mathbb{C}$. Put precisely:

\begin{lemma}
    A point $p\in\mathbb{C}$ is a pole of order $k$ of $f\in\fieldC$ if and only if $\shifttorus(p)$ is a pole of order $k$ of $\shift(f)\in\fieldC$.
\end{lemma}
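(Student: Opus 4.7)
The plan is to verify this lemma by a direct translation argument on Laurent expansions, exploiting the definitions of $\shift$ and $\shifttorus$. The key observation is that $\shift(f)(z) = f(z+\shiftnumber)$ evaluated near $\shifttorus(p) = p - \shiftnumber$ is the same as evaluating $f$ near $p$, so the local behavior is identical up to translation.

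First I would establish the forward direction. Suppose $p$ is a pole of $f$ of order $k$, so that near $p$ we have a Laurent expansion
\[ f(z) = \sum_{n \geq -k} a_n (z-p)^n, \qquad a_{-k} \neq 0. \]
Substituting $z \mapsto z + \shiftnumber$ yields
\[ \shift(f)(z) = f(z + \shiftnumber) = \sum_{n \geq -k} a_n (z + \shiftnumber - p)^n = \sum_{n \geq -k} a_n (z - \shifttorus(p))^n, \]
since $\shifttorus(p) = p - \shiftnumber$ by definition. This is a valid Laurent expansion of $\shift(f)$ centered at $\shifttorus(p)$, and its leading coefficient $a_{-k}$ is nonzero, so $\shifttorus(p)$ is a pole of $\shift(f)$ of order exactly $k$.

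For the converse, I would simply observe that $\shift$ is an automorphism of $\fieldC$ with inverse $\shift^{-1}$ given by $f(z) \mapsto f(z - \shiftnumber)$, corresponding to the shift $a \mapsto a + \shiftnumber$ on $\mathbb{C}$. Applying the forward direction to $\shift^{-1}$ and the function $\shift(f)$ at the point $\shifttorus(p)$ recovers $f$ and the point $p$, completing the equivalence.

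There is no real obstacle here: the lemma is essentially a book-keeping statement that unpacks the definition of how $\shift$ on $\fieldC$ and $\shifttorus$ on $\mathbb{C}$ are set up to be compatible. The only thing to be careful about is the sign convention, namely that $\shifttorus$ shifts \emph{backwards} by $\shiftnumber$ precisely so that $\shift^{-1}(f)(a) = f(\shifttorus(a))$, which is what makes poles of $\shift(f)$ land at $\shifttorus(p)$ rather than at $p + \shiftnumber$.
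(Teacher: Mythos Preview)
Your proof is correct. The paper itself states this lemma without proof, treating it as an immediate consequence of the definitions of $\shift$ and $\shifttorus$; your Laurent-expansion argument is exactly the routine verification one would supply if pressed, and your attention to the sign convention in $\shifttorus(p) = p - \shiftnumber$ is the only point where care is needed.
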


Notice the condition placed on $\shiftnumber$ that it not be an element of $\mathbb{Q}\Lambda$: this is equivalent to the automorphism $\shift$ on $\fieldC$ having infinite order. This can also be phrased as $\shiftnumber$ being a non-torsion point of the torus $\mathbb{C}/\Lambda$.

Furthermore, the shift automorphism $\shift$ commutes with differentiation because of the chain rule: this implies that given an elliptic function $f\in\fieldC$, a $\zeta$-expansion of $\shift(f)$ is obtained by applying $\shift$ to the individual terms of the $\zeta$-expansion of $f$.

Finally, the elements of $\fieldC$ that are fixed by $\shift$ are those functions that are periodic with respect to $\lambda_1$, $\lambda_2$, and the non-torsion point $\shiftnumber$, and again by a classical result of Jacobi in \cite{Jacobi1835} this implies that those functions are constant. Thus if $g\in\fieldC$ satisfies $\shift(g)=g$ then $g\in\mathbb{C}$. Moreover, for any $g\in\fieldC\backslash\mathbb{C}$ there does not exist a positive integer $n$ for which $\shift^n(g)=g$.

With this difference structure on $\fieldC$ we may now define summability.

\begin{definition}
    An element $f\in \fieldC$ is called \defemph{summable} if there exists $a\in \fieldC$ such that $f = \shift(a)-a$.
\end{definition}

\begin{example}
    Because $\shift$ is a field automorphism of $\fieldC$ the uniqueness of such a $a$ is up to a term fixed by $\shift$, and as discussed before those terms are constants:
    \begin{alignat*}{2}
     && \shift(a_1)-a_1 &= \shift(a_2)-a_2\\
     \Longleftrightarrow\quad && \shift(a_1-a_2) &= a_1-a_2\\
     \Longleftrightarrow\quad && a_1-a_2 &\in\mathbb{C}.
    \end{alignat*}
\end{example} \medskip

\begin{example}
\label{ex:shiftn}
    Summability is equivalent to being in the image of $\fieldC$ under the operator $\shift-\Id$. The following telescoping series computation shows that the image of $\fieldC$ under $\shift^n-\Id$ consists of summable functions:
    \[\shift^n(a) - a = \shift\left[\sum\limits_{k=0}^{n-1}\shift^k(a)\right] - \sum\limits_{k=0}^{n-1}\shift^k(a).\]
    This technique will feature in the reduction process. It also does not require the analytic structure of $\fieldC$, so it is applicable to general difference fields.
\end{example}

Our reduction process will focus on cancelling poles of $f$ by adding summable elements of $\fieldC$ to $f$. The poles of a summable elliptic function $\shift(a)-a$ for $a\in\fieldC$ are related by $\shifttorus$ orbits, so we require language to analyze the poles on these orbits.

Because $\shiftnumber$ is a non-torsion point of $\Lambda$, the elements of an individual orbit of $\shifttorus$ are all distinct modulo $\Lambda$. Any elliptic function $f\in\fieldC$ has only finitely many poles in $\mathbb{C}/\Lambda$, so there exist only finitely many poles of $f$ in a given orbit of $\shifttorus$. Therefore it is possible for us to quantify the spread of the poles in a given orbit.

\begin{definition}
    Let $0\neq f\in \fieldC$.
    \begin{enumerate}[label=(\alph*)]
    \item The polar dispersion of $f$ is the maximum integer $n$ such that there exist poles $p$ and $p+ns$ of $f$, and is denoted $\pdisp(f)$.
    \item The weak polar dispersion of $f$ is the maximum integer $n$ such that there exist poles $p$ and $p+ns$ of $f$ with orders both at least $2$, and is denoted by $\wpdisp(f)$.
    \end{enumerate}
\end{definition}

\begin{remark}
    The reasons for introducing the notation $\shift^*$ in place of the simpler ``subtract $s$'' are twofold. First, the computations to follow will be cast in terms of orbits of the action of $\shift^*$ on the torus $\mathbb{C}/\Lambda$. Second, this setup is more natural in the more technical algebraic setting in Chapter~\ref{c:algebraic}, and introducing it now sets up the analogy.
\end{remark}

The measure of polar dispersion is crucial to the study of summability in view of the observation that the $\shift-\Id$ operator increments the polar dispersion and weak polar dispersion of a function.

\begin{lemma}
\label{lem:incrementDispersion}
    If $f = \shift(a)-a$ for some nonconstant $a\in\fieldC$ then
    \[\pdisp(f)-\pdisp(a)=\wpdisp(f)-\wpdisp(a)=1.\]
\end{lemma}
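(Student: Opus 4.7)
The plan is to work one orbit of $\shift^*$ at a time. For each orbit $\mathcal{O}$ of the $\shift^*$-action on $\mathbb{C}/\Lambda$, I would fix a base point and parametrize the points of $\mathcal{O}$ by $\mathbb{Z}$ via $n \mapsto \tilde p + ns$ (mod $\Lambda$), and for any $g \in \fieldC$ let $S_g^{\mathcal{O}} \subseteq \mathbb{Z}$ be the indices of poles of $g$ in $\mathcal{O}$, and $S_g^{\mathcal{O},\geq 2}$ the subset of indices of poles of order at least $2$. The key relation is that $p$ is a pole of $a$ of order $k$ if and only if $\shift^*(p) = p - \shiftnumber$ is a pole of $\shift(a)$ of order $k$, so that $S_{\shift(a)}^{\mathcal{O}} = S_a^{\mathcal{O}} - 1$ and likewise for the order-$\geq 2$ versions. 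The polar dispersion $\pdisp(g)$ then equals $\max_{\mathcal{O}}(\max S_g^{\mathcal{O}} - \min S_g^{\mathcal{O}})$, and analogously for $\wpdisp$.

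Next I would compare the pole structure of $f = \shift(a) - a$ orbit by orbit. For the upper bound, a pole of $f$ in $\mathcal{O}$ must come from at least one of the two summands, so $S_f^{\mathcal{O}} \subseteq S_a^{\mathcal{O}} \cup (S_a^{\mathcal{O}} - 1)$, and this set has width at most $(\max S_a^{\mathcal{O}} - \min S_a^{\mathcal{O}}) + 1$. Taking the maximum over orbits yields $\pdisp(f) \leq \pdisp(a) + 1$. For the reverse inequality, pick the orbit $\mathcal{O}$ where $a$ achieves $\pdisp(a) = N$, with $n_1 = \min S_a^{\mathcal{O}}$ and $n_1 + N = \max S_a^{\mathcal{O}}$. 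The index $n_1 - 1$ lies in $S_{\shift(a)}^{\mathcal{O}}$ but not in $S_a^{\mathcal{O}}$ (by minimality of $n_1$), and $n_1 + N$ lies in $S_a^{\mathcal{O}}$ but not in $S_{\shift(a)}^{\mathcal{O}}$ (by maximality). Hence at each extreme exactly one summand contributes a principal part, no cancellation can occur, and both indices appear as poles of $f$; this gives $\pdisp(f) \geq N + 1$.

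For the weak polar dispersion I would run the same argument with poles of order $\geq 2$ in place of all poles. The only new observation needed is that if $f$ has a pole of order $\geq 2$ at some index $n$, then at least one of $a$ or $\shift(a)$ has a pole of order $\geq 2$ at $n$, since otherwise both contribute at most simple poles there and so does their difference. This yields $S_f^{\mathcal{O},\geq 2} \subseteq S_a^{\mathcal{O},\geq 2} \cup (S_a^{\mathcal{O},\geq 2} - 1)$, giving $\wpdisp(f) \leq \wpdisp(a) + 1$. At the extreme indices of the orbit realizing $\wpdisp(a)$, exactly one of $a, \shift(a)$ has an order-$\geq 2$ principal part (the other has order at most $1$), so this principal part survives in $f$ and gives $\wpdisp(f) \geq \wpdisp(a) + 1$. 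The only subtle point in the whole argument is ruling out catastrophic cancellation at the endpoints; the maximality (respectively minimality) of those indices is precisely what forces the competing term to have strictly lower order or be absent, so this never happens.
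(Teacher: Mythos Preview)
Your proposal is correct and follows essentially the same approach as the paper's proof: both establish the upper bound by observing that the poles of $f$ lie in the union of the poles of $a$ and their $\shift^*$-translates, and the lower bound by checking that the extremal poles on an orbit realizing $\pdisp(a)$ (respectively $\wpdisp(a)$) survive in $f$ because only one summand contributes there. Your write-up is more explicit than the paper's about ruling out cancellation at the endpoints via the minimality/maximality of the extreme indices, which the paper leaves implicit.
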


\begin{proof}
    The poles of $\shift(a)$ are $\shift^*(p)=p-s$ for all poles $p$ of $a$, so $\pdisp(f)-\pdisp(a)\leq 1$. If $p$ and $p+ns$ are poles of $a$ where $n=\pdisp(a)$, then $p-s$ and $p+ns$ are poles of $\shift(a)-a$, so $\pdisp(f)-\pdisp(a)\geq 1$. The statement for weak polar dispersion is shown analogously.
\end{proof}

\begin{example}
\label{example:wpNotSummable}
    Lemma~\ref{lem:incrementDispersion} immediately implies that particular functions are not summable. If $a\in\mathbb{C}$ then $\shift(a)-a=0$, and if $a\in\fieldC$ is nonconstant then $\shift(a)-a$ has more poles than $a$. As a consequence, no elliptic function with fewer than two \emph{distinct} poles in $\mathbb{C}/\Lambda$ is summable. Thus nonzero constant functions are not summable, and neither are the Weierstrass elliptic function $\wp(z)$ or its derivatives.
\end{example}

\subsection{Analytic Pinnings}
\label{ss:analyticReduction}

The mechanics of the rational case as in Chapter~\ref{c:basicResidues} are extremely accessible because the (function) field of interest is the field of meromorphic functions on the projective line, which has genus zero. To contrast, the elliptic functions $\fieldC$ are meromorphic functions over the torus $\mathbb{C}/\Lambda$, which is a curve of genus $1$. This introduces such algebraic-geometric computations as Corollary~\ref{cor:2poles}: any nonconstant elliptic function has at least two poles, counting multiplicity. This will complicate the later parts of the reduction process where we work to cancel poles of order $1$.

In Subsection~\ref{ss:reductionSearch} we will compute a reduction of an elliptic function $f$: we construct $a\in\fieldC$ such that the polar dispersion of
\[f-[\shift(a)-a]\]
is minimized. After this is done, summability will be equivalent to this reduction vanishing. The reduction will be performed with respect to the following choice of points on $\mathbb{C}/\Lambda$:

\begin{definition}
An \defemph{analytic pinning} is a collection $\mathcal{R}$ of the following choices: for each $(\Lambda\oplus\mathbb{Z}\shiftnumber)$-orbit $\omega\in\allOmegas$, a representative $q_{\omega}\in\omega$, with the condition that $0\in\mathcal{R}$. We denote the orbit containing $0$ by $\widehat{\omega}$.
\end{definition}

Analytic pinnings will be used throughout to express summations over $\shifttorus$-orbits so they are anchored at a specific location, that is, a summation will be taken over all $\hat{q}+n\shiftnumber$ where $\hat{q}\in\mathcal{R}$ and $n\in\mathbb{Z}$.

The choice of an analytic pinning can in fact be encoded within a $\zeta$-expansion of a elliptic function. Let $f\in\fieldC$ be elliptic, and let $\mathcal{R}$ be an analytic pinning. By virtue of a $\zeta$-expansion of $f$ being guaranteed to exist by Proposition~\ref{prop:zetaExp}, the analytic pinning $\mathcal{R}$ lets us organize such a $\zeta$-expansion by orbits of $\shifttorus$.

\begin{corollary}\label{cor:pinnedZetaExp}
    Given a choice of analytic pinning $\mathcal{R}$ of $f$, there exist constants $c_j(f,\hat{q}+ns)$ for $n\in\mathbb{Z}$ and $j\in\mathbb{N}$, and a constant $C_{\mathcal{R}}(f)\in\mathbb{C}$ depending only on $\mathcal{R}$, such that $f(z)$ has global expansion
    \begin{equation}
        f(z) = C_{\mathcal{R}}(f)+\sum\limits_{\hat{q}\in\mathcal{R}} \sum\limits_{n\in\mathbb{Z}}\sum\limits_{j\in\mathbb{N}} \frac{c_j(f,\hat{q}+n\shiftnumber)(-1)^{j-1}}{(j-1)!}\cdot\frac{d^{j-1}}{dz^{j-1}} \zeta(z-\hat{q}-n\shiftnumber).\label{eq:pinnedZetaExp}
    \end{equation}
\end{corollary}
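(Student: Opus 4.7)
The plan is to derive this corollary directly from Proposition~\ref{prop:zetaExp} by reorganizing the $\zeta$-expansion according to $\shifttorus$-orbits rather than arbitrary pole representatives, using the quasi-periodicity of $\zeta$ to absorb the discrepancy into a single constant depending on $\mathcal{R}$.

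First I would invoke Proposition~\ref{prop:zetaExp} to obtain a $\zeta$-expansion $f(z) = c_0(f) + \sum_{k=1}^{n}\sum_{j\geq 1}\tfrac{(-1)^{j-1}c_j(f,\alpha_k)}{(j-1)!}\zeta^{(j-1)}(z-\alpha_k)$ for some set $\{\alpha_1,\dots,\alpha_n\}$ of pole representatives modulo $\Lambda$. Since $\shiftnumber$ is non-torsion, each $\alpha_k$ lies in a unique $(\Lambda\oplus\mathbb{Z}\shiftnumber)$-orbit $\omega_k$, and there is a unique integer $n_k\in\mathbb{Z}$ and a unique $\lambda_k\in\Lambda$ with $\alpha_k = \hat{q}_{\omega_k} + n_k\shiftnumber + \lambda_k$, where $\hat{q}_{\omega_k}$ is the representative of $\omega_k$ prescribed by $\mathcal{R}$.

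Next I would rewrite each summand using this decomposition. For the derivatives $j\geq 2$, the function $\zeta^{(j-1)}$ is already elliptic (by Proposition~\ref{prop:zetaProps}(a) applied iteratively, each derivative of $\wp$ is $\Lambda$-periodic), so $\zeta^{(j-1)}(z-\alpha_k) = \zeta^{(j-1)}(z-\hat{q}_{\omega_k}-n_k\shiftnumber)$ without modification. For the $j=1$ term, Proposition~\ref{prop:zetaProps}(c) gives
\begin{equation*}
\zeta(z-\alpha_k) = \zeta\bigl((z-\hat{q}_{\omega_k}-n_k\shiftnumber) - \lambda_k\bigr) = \zeta(z-\hat{q}_{\omega_k}-n_k\shiftnumber) - \eta(\lambda_k).
\end{equation*}
I now define $c_j(f,\hat{q}+n\shiftnumber) := c_j(f,\alpha_k)$ whenever $(\hat{q},n) = (\hat{q}_{\omega_k},n_k)$ for some $k$, and set $c_j(f,\hat{q}+n\shiftnumber):=0$ for all other indices; since $f$ has finitely many poles modulo $\Lambda$, only finitely many $c_j(f,\hat{q}+n\shiftnumber)$ are nonzero, so the triple sum in \eqref{eq:pinnedZetaExp} is effectively finite and converges. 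Finally I absorb the constant shifts into
\begin{equation*}
C_{\mathcal{R}}(f) := c_0(f) - \sum_{k=1}^{n} c_1(f,\alpha_k)\,\eta(\lambda_k),
\end{equation*}
which depends only on $f$ and $\mathcal{R}$ because $\hat{q}_{\omega_k}$, $n_k$, and $\lambda_k$ are uniquely determined by $\mathcal{R}$ and the poles of $f$. Substituting everything back recovers the identity \eqref{eq:pinnedZetaExp}.

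For uniqueness of the $c_j(f,\hat{q}+n\shiftnumber)$, I would note that these coefficients coincide with the coefficients of the principal part of $f$ at the pole $\hat{q}+n\shiftnumber$ (equivalently, at its unique representative $\alpha_k$ in the original $\zeta$-expansion), and Proposition~\ref{prop:zetaExp} already asserts uniqueness of the principal-part coefficients; once the $c_j$'s are fixed, the constant $C_{\mathcal{R}}(f)$ is forced by evaluating \eqref{eq:pinnedZetaExp} at any non-pole. The main bookkeeping concern, and the only place the argument is not fully mechanical, is confirming that $\lambda_k$ and $n_k$ are well-defined: this rests on the hypothesis $\shiftnumber\notin\mathbb{Q}\Lambda$, which ensures the direct sum $\Lambda\oplus\mathbb{Z}\shiftnumber$ is internal and hence the pair $(n_k,\lambda_k)$ is unique given $\hat{q}_{\omega_k}$.
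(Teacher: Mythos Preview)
Your argument is correct and follows the same approach the paper intends: the paper treats this corollary as an immediate reorganization of the $\zeta$-expansion from Proposition~\ref{prop:zetaExp} by $\shifttorus$-orbits, only remarking that the sum has finite support, while you spell out explicitly how the lattice translates $\lambda_k$ are absorbed via quasi-periodicity into the constant $C_{\mathcal{R}}(f)$. Your more detailed bookkeeping (especially the formula $C_{\mathcal{R}}(f)=c_0(f)-\sum_k c_1(f,\alpha_k)\eta(\lambda_k)$) is exactly what underlies the paper's later Proposition~\ref{prop:changingAnalyticPinnings}, so nothing here diverges from the paper's line of reasoning.
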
 \medskip

Indeed, this summation has finitely many terms because we are summing over orbits of the shift $\shifttorus$, namely
\[\{q_{\omega}+ns\mid n\in\mathbb{Z}\}\in \mathbb{C}/(\mathbb{Z}\shiftnumber)\]
for each $\omega\in\allOmegas$. Effectively the summation is over a selection of representatives of the orbits in $\mathbb{C}/\Lambda$, so it will have finite support because the elliptic function $f(z)$ has finitely many poles in $\mathbb{C}/\Lambda$.

\subsection{Orbital Residues}

Analogous to the work of \cite{Dreyfus2018,Hardouin2021}, we define analytic orbital residues and demonstrate how they form a partial obstruction to summability.

\begin{definition}
    Let $f\in \fieldC$ be elliptic. For an orbit $\omega\in\allOmegas$, choose a representative $q_{\omega}$. The \defemph{analytic orbital residue} of $f$ at the orbit $\omega$ of order $j\in\mathbb{N}$ is
    \[\ores(f,\omega,j):=\sum_{n\in\mathbb{Z}}c_j(f,q_{\omega}+n\shiftnumber).\]
\end{definition}

With this definition the orbital residues of $f$ in equation~\eqref{eq:pinnedZetaExp} are visible from the $\zeta$-expansion of $f$.

\begin{remark}
    Recall that the values of $c_j(f,\alpha)$ are determined from the principal parts of $f$ at its local expansion near $z=\alpha$, so the definition of orbital residues is not tied to a particular $\zeta$-expansion. Furthermore, they are independent of the choice of representative of $\omega$: the principal parts of $f$ are not affected when moving by lattice periods, and the orbital residues are defined as sums over all shifts by integer multiples of $\shiftnumber$.
\end{remark}\medskip \medskip

Each orbital residue is invariant under the action of $\shift$, implying the following criterion for summability:
\begin{lemma}
\label{lem:analyticOrbitalResidues}
    Let $f\in\fieldC$. If there exists $a\in\fieldC$ such that $f=\shift(a)-a$, then the orbital residues of $f$ all vanish.
\end{lemma}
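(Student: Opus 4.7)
The plan is to show that each orbital residue of $f = \shift(a) - a$ is a telescoping sum with only finitely many nonzero terms, and thus vanishes. The central observation is that the shift automorphism $\shift$ moves poles in exactly the way the orbital structure is designed to track: by the lemma preceding the definition of $\pdisp$, a point $p \in \mathbb{C}$ is a pole of $a$ of order $k$ if and only if $\shifttorus(p) = p - \shiftnumber$ is a pole of $\shift(a)$ of order $k$. Since $\shift$ and differentiation commute, this relationship lifts to the principal parts: the coefficient of $(z-(p-\shiftnumber))^{-j}$ in the local expansion of $\shift(a)$ near $p - \shiftnumber$ coincides with the coefficient of $(z-p)^{-j}$ in the local expansion of $a$ near $p$. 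In the notation of the paper, this reads
\[
c_j\bigl(\shift(a),\, q\bigr) \;=\; c_j\bigl(a,\, q + \shiftnumber\bigr)
\]
for every $q \in \mathbb{C}$ and every $j \in \mathbb{N}$, with both sides being zero if the corresponding point is not a pole.

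Next I would use $\mathbb{C}$-linearity of $c_j(\,\cdot\,, q)$, which follows from its interpretation as a Laurent coefficient, to conclude
\[
c_j(f, q) \;=\; c_j\bigl(\shift(a), q\bigr) - c_j(a, q) \;=\; c_j(a,\, q + \shiftnumber) - c_j(a, q).
\]
Fix an orbit $\omega \in \allOmegas$ with chosen representative $q_\omega$. Substituting $q = q_\omega + n\shiftnumber$ and summing over $n \in \mathbb{Z}$ gives
\[
\ores(f, \omega, j) \;=\; \sum_{n \in \mathbb{Z}} \Bigl[ c_j\bigl(a, q_\omega + (n+1)\shiftnumber\bigr) - c_j\bigl(a, q_\omega + n\shiftnumber\bigr) \Bigr],
\]
which is a telescoping sum. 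The sum is finitely supported because $a \in \fieldC$ has only finitely many poles in $\mathbb{C}/\Lambda$, and the points $\{q_\omega + n\shiftnumber : n \in \mathbb{Z}\}$ are pairwise distinct modulo $\Lambda$ (since $\shiftnumber \notin \mathbb{Q}\Lambda$). Consequently only finitely many summands are nonzero, and the telescoping collapses to $0$, proving that $\ores(f, \omega, j) = 0$ for every orbit $\omega$ and every order $j$.

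The only step that requires some care is the pole-tracking identity $c_j(\shift(a), q) = c_j(a, q+\shiftnumber)$; it is essentially the chain rule applied to the Taylor/Laurent expansion of $a(z+\shiftnumber)$ about $q$, but one must be sure that the sign and normalization conventions in the $\zeta$-expansion of Proposition~\ref{prop:zetaExp} agree with those used in the definition of orbital residues. Once this bookkeeping is confirmed, the remainder of the argument is the formal telescoping described above, and no analytic input beyond Proposition~\ref{prop:zetaExp} and the finiteness of the pole set is needed.
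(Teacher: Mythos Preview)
Your proposal is correct and follows essentially the same approach as the paper: both arguments rest on the shift identity $c_j(\shift(a),q)=c_j(a,q+\shiftnumber)$ and then observe that summing over the orbit reindexes to give $\ores(\shift(a),\omega,j)=\ores(a,\omega,j)$, which is exactly your telescoping statement phrased as $\shift$-invariance of orbital residues. Your explicit justification of finite support (via $\shiftnumber\notin\mathbb{Q}\Lambda$) is a welcome detail that the paper leaves implicit.
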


\begin{proof}
    Let $a$ have $\zeta$-expansion with coefficients $c_j(a,q_{\omega}+n\shiftnumber)$ for all $\omega\in\allOmegas$ and $n\in\mathbb{Z}$. Then $\shift(a)$ has a $\zeta$-expansion with coefficients
    \[c_j(\shift(a),q_{\omega}+n\shiftnumber) = c_j(a,q_{\omega}+(n-1)\shiftnumber).\]
    Consequently the orbital residues of $\shift(a)$ are
    \[\ores(\shift(a),\omega,j) = \sum\limits_{n\in\mathbb{Z}} c_j(\shift(a),q_{\omega}+n\shiftnumber) = \sum\limits_{n\in\mathbb{Z}} c_j(a,q_{\omega}+(n-1)\shiftnumber) = \ores(a,\omega,j).\]
    Orbital residues are additive, so $\ores(f,\omega,j)=0$ for all $\omega\in\allOmegas$ and $j\geq 1$.
\end{proof}

This result mirrors \cite[Proposition~2.5]{Chen2012}, and in fact an algebraic analogue exists as \cite[Proposition~B.8]{Dreyfus2018}. However, this result is only unidirectional. The converse is not true as the following lemma demonstrates. The lemma concerns a small example, but it is by no means a toy--it becomes an important edge case in the summability of elliptic functions!

\begin{lemma}
\label{lem:analyticLPSummable}
    Consider the elliptic function
    \[f(z)=\alpha_1+\alpha_2[\zeta(z+\shiftnumber)-\zeta(z)]\]
    where $\alpha_1,\alpha_2\in\mathbb{C}$ are not both zero.
    \begin{enumerate}[label=(\roman*)]
    \item All the orbital residues of $f$ vanish, and yet $f$ is not summable.
    \item $f'(z)$ is summable.
    \end{enumerate}
\end{lemma}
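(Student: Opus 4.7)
The plan is to read the $\zeta$-expansion of $f$ straight off its definition, compute all orbital residues at a glance, and then attack non-summability by analyzing the forced $\zeta$-expansion of any hypothetical witness $a$ to $f = \shift(a) - a$.

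First, by Corollary~\ref{cor:zetaDifferenceIsElliptic}, $f \in \fieldC$, and its $\zeta$-expansion is already given by the defining formula: $c_0(f) = \alpha_1$, $c_1(f, 0) = -\alpha_2$, $c_1(f, -\shiftnumber) = \alpha_2$, and all other coefficients zero. Since $\shifttorus(0) = -\shiftnumber$, both poles of $f$ lie in the single $\shifttorus$-orbit $\widehat{\omega}$ through $0$, and the only potentially nontrivial orbital residue is $\ores(f, \widehat{\omega}, 1) = -\alpha_2 + \alpha_2 = 0$. All other orbital residues vanish trivially, establishing the first half of (i).

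For non-summability, I would suppose for contradiction that $f = \shift(a) - a$ with $a \in \fieldC$. Tracking how $\shift$ acts on a $\zeta$-expansion (the action commutes with differentiation and shifts poles by $-\shiftnumber$) yields the coefficient recursion $c_j(a, q + \shiftnumber) - c_j(a, q) = c_j(f, q)$. Analyzed orbit by orbit in $\mathbb{C}/\Lambda$, the right-hand side vanishes for every $j \geq 2$ and for $j = 1$ off $\widehat{\omega}$, forcing $c_j(a, \cdot)$ to be constant along each infinite $\shifttorus$-orbit; finiteness of the pole set of $a$ then collapses that constant to $0$. On $\widehat{\omega}$ at $j = 1$, the two prescribed jumps at $q = 0, -\shiftnumber$ combined with the same finiteness pin $c_1(a, 0) = \alpha_2$ and $c_1(a, n\shiftnumber) = 0$ for all $n \neq 0$. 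Thus $a$ would be an elliptic function with a single simple pole at $0$ of residue $\alpha_2$, which by Theorem~\ref{thm:balancingGame}(a) forces $\alpha_2 = 0$. The residual case $\alpha_2 = 0$, $\alpha_1 \neq 0$ asks whether the nonzero constant $\alpha_1$ is summable; the same telescoping forces $a$ to be constant, whence $\shift(a) - a = 0 \neq \alpha_1$, a contradiction (alternatively invoke Example~\ref{example:wpNotSummable}). The main obstacle is the orbit-by-orbit bookkeeping of this recursion, together with the finite-support argument used to collapse each infinite sequence of coefficients to zero.

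Finally, (ii) is an explicit computation. Using $\zeta' = -\wp$ from Proposition~\ref{prop:zetaProps}(a), differentiating gives $f'(z) = \alpha_2\bigl[\wp(z) - \wp(z + \shiftnumber)\bigr] = \shift(-\alpha_2\wp) - (-\alpha_2\wp)$, so $a := -\alpha_2 \wp \in \fieldC$ witnesses the summability of $f'$.
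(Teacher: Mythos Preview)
Your proof is correct. Part~(ii) and the vanishing of orbital residues in part~(i) match the paper exactly.

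For non-summability in part~(i), you and the paper reach the same contradiction (a hypothetical witness $a$ would be elliptic with a single simple pole), but by different routes. The paper invokes Lemma~\ref{lem:incrementDispersion} directly: since $f$ has only simple poles and $\pdisp(f)\leq 1$, any $a$ with $\shift(a)-a=f$ must have only simple poles and $\pdisp(a)=0$, hence at most one simple pole, contradicting Corollary~\ref{cor:2poles}. You instead unwind the coefficient recursion $c_j(a,q+\shiftnumber)-c_j(a,q)=c_j(f,q)$ orbit by orbit and use finite support to collapse everything. Your argument is more elementary in that it avoids the dispersion machinery, at the cost of some bookkeeping; the paper's argument is shorter because that bookkeeping has already been packaged into Lemma~\ref{lem:incrementDispersion}. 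Both are fine, and your explicit recursion is essentially what underlies the dispersion lemma anyway.
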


\begin{proof}
    \begin{enumerate}[label=(\roman*)]
    \item First note that the function is indeed elliptic by Corollary~\ref{cor:zetaDifferenceIsElliptic}. all the orbital residues vanish by construction. Now suppose there did exist $a\in\fieldC$ such that $f = \shift(a)-a$. Then $a$ must be a nonconstant elliptic function, because $f$ is nonzero. Because $f$ has at most simple poles, Lemma~\ref{lem:incrementDispersion} implies that $a$ has at most simple poles. It also implies that $\pdisp(a)=0$. Therefore the elliptic function $a$ has a single simple pole, contradicting Corollary~\ref{cor:2poles}.
    \item The derivative of $f$ is
    \[f'(z) = -\alpha_2\wp(z+\shiftnumber) + \alpha_2\wp(z) = (\shift-\Id)\big(-\alpha_2\wp(z)\big),\]
    which is summable.
    \end{enumerate}
\end{proof}

This shows that orbital residues are not enough to determine summability. We need additional measurements associated with elliptic functions that can somehow detect functions as in Lemma~\ref{lem:analyticLPSummable}.

\subsection{Searching for a Reduction}
\label{ss:reductionSearch}

We now show the complete reduction process for an elliptic function to minimize polar dispersion. This reduction shall be done in three steps:

\begin{enumerate}[label=\arabic*.]
    \item Canceling poles of order higher than $1$,
    \item Canceling order $1$ poles away $\widehat{\omega}$,
    \item Canceling order $1$ poles on $\widehat{\omega}$.
\end{enumerate}

The first step mimics the rational case so closely that the orbital residues fall out. The fact that we are working with meromorphic functions over a genus $1$ curve means that the second and third steps have complications where interesting additional terms appear in the orbit containing $0$.

Let $f\in\fieldC$ be elliptic and let $\mathcal{R}$ be an analytic pinning. Recall the $\zeta$-expansion for $f$ that is guaranteed from Corollary~\ref{cor:pinnedZetaExp}:

    \begin{equation}
        f(z) = C_{\mathcal{R}}(f)+\sum\limits_{\hat{q}\in\mathcal{R}} \sum\limits_{n\in\mathbb{Z}}\sum\limits_{j\in\mathbb{N}} \frac{c_j(f,\hat{q}+n\shiftnumber)(-1)^{j-1}}{(j-1)!}\cdot\frac{d^{j-1}}{dz^{j-1}} \zeta(z-\hat{q}-n\shiftnumber).\label{eq:analyticReductionStart}
    \end{equation}

    \noindent For $j>1$ the function $\dfrac{d^{j-1}}{dz^{j-1}} \zeta(z-\hat{q}-n\shiftnumber)$ is elliptic: indeed it is $-\dfrac{d^{j-2}}{dz^{j-2}} \wp(z-\hat{q}-n\shiftnumber)$. Now consider
    
    \begin{equation}
        \frac{d^{j-1}}{dz^{j-1}} \zeta(z-\hat{q}) - \frac{d^{j-1}}{dz^{j-1}} \zeta(z-\hat{q}-n\shiftnumber) = \shift^n\left[\frac{d^{j-1}}{dz^{j-1}} \zeta(z-\hat{q}-n\shiftnumber)\right] - \frac{d^{j-1}}{dz^{j-1}} \zeta(z-\hat{q}-n\shiftnumber).\label{eq:analyticReductionTool1}
    \end{equation}
    
    \noindent This is summable by Example~\ref{ex:shiftn}. Thus all the higher-order poles can be grouped at the elements of the analytic pinning $\mathcal{R}$ by adding a summable elliptic function to $f$ as follows:
    
    \begin{align}
        & f(z) + \sum\limits_{\hat{q}\in\mathcal{R}} \sum\limits_{n\in\mathbb{Z}}\sum\limits_{j\geq 2} \frac{c_j(f,\hat{q}+n\shiftnumber)}{(j-1)!}\left[\frac{d^{j-1}}{dz^{j-1}} \zeta(z-\hat{q}) - \frac{d^{j-1}}{dz^{j-1}} \zeta(z-\hat{q}-n\shiftnumber)\right] \nonumber\\
        = \ &  C_{\mathcal{R}}(f)+\sum\limits_{\hat{q}\in\mathcal{R}} \sum\limits_{n\in\mathbb{Z}} c_1(f,\hat{q}+n\shiftnumber)\zeta(z-\hat{q}-n\shiftnumber) \nonumber\\
        &+ \sum\limits_{\hat{q}\in\mathcal{R}} \sum\limits_{n\in\mathbb{Z}}\sum\limits_{j\geq 2} \frac{c_j(f,\hat{q}+n\shiftnumber)(-1)^{j-1}}{(j-1)!}\cdot\frac{d^{j-1}}{dz^{j-1}} \zeta(z-\hat{q})\nonumber\\[0.3em]
        = \ & C_{\mathcal{R}}(f)+\sum\limits_{\hat{q}\in\mathcal{R}} \sum\limits_{n\in\mathbb{Z}} c_1(f,\hat{q}+n\shiftnumber)\zeta(z-\hat{q}-n\shiftnumber) \nonumber\\
        &+ \sum\limits_{\omega\in\allOmegas} \sum\limits_{j\geq 2} \frac{\ores(f,\omega,j)(-1)^{j-1}}{(j-1)!}\cdot\frac{d^{j-1}}{dz^{j-1}} \zeta(z-q_{\omega}).\label{eq:analyticReduction1}
    \end{align}
    
    \noindent This new function has at most one pole of higher order on each orbit $\omega\in\allOmegas$, and therefore by logic similar to Example~\ref{example:wpNotSummable} this function can only be summable if these orbital residues at higher orders all vanish. This completes the first step of the reduction. \\

    For the second step of the reduction assume now that $f(z)$ only has simple poles. Thus it has a $\zeta$-expansion of the form

    \[f(z) = C_{\mathcal{R}}(f)+\sum\limits_{\hat{q}\in\mathcal{R}} \sum\limits_{n\in\mathbb{Z}} c_1(f,\hat{q}+n\shiftnumber)\zeta(z-\hat{q}-n\shiftnumber).\]

    \noindent We show a reduction of $f(z)$ so it has at most one pole on each orbit besides $\widehat{\omega}$. Consider the elliptic function
    \[\zeta(z-\hat{q}-n\shiftnumber) - \zeta(z-n\shiftnumber).\]
    Applying $\shift^n-\Id$ yields the summable elliptic function
    
    \begin{equation}
        \zeta(z-\hat{q}) - \zeta(z) - \zeta(z-\hat{q}-n\shiftnumber) + \zeta(z-n\shiftnumber).\label{eq:analyticReductionTool2}
    \end{equation}
    
    We cancel order $1$ poles away from the orbit $\widehat{\omega}\in\allOmegas$ by adding a summable elliptic function to $f(z)$ as follows:
    
        \begin{align}
        & f(z) + \sum\limits_{\substack{\hat{q}\in\mathcal{R} \\ \hat{q}\neq 0}} \sum\limits_{n\in\mathbb{Z}}
        c_1(f,\hat{q}+n\shiftnumber)\big[\zeta(z-\hat{q}) - \zeta(z) - \zeta(z-\hat{q}-n\shiftnumber) + \zeta(z-n\shiftnumber)\big] \nonumber\\
        = \ &  C_{\mathcal{R}}(f)+ \sum\limits_{n\in\mathbb{Z}} c_1(f,n\shiftnumber)\zeta(z-n\shiftnumber) + \sum\limits_{\substack{\hat{q}\in\mathcal{R} \nonumber\\ \hat{q}\neq 0}} \sum\limits_{n\in\mathbb{Z}} c_1(f,\hat{q}+n\shiftnumber)\zeta(z-\hat{q}) \nonumber\\
        & - \sum\limits_{\substack{\hat{q}\in\mathcal{R} \\ \hat{q}\neq 0}} \sum\limits_{n\in\mathbb{Z}} c_1(f,\hat{q}+n\shiftnumber)\zeta(z) + \sum\limits_{\substack{\hat{q}\in\mathcal{R} \\ \hat{q}\neq 0}} \sum\limits_{n\in\mathbb{Z}} c_1(f,\hat{q}+n\shiftnumber)\zeta(z-n\shiftnumber)  \nonumber
        \end{align}

        \begin{align}
        = \ &  C_{\mathcal{R}}(f)+ \sum\limits_{n\in\mathbb{Z}} c_1(f,n\shiftnumber)\zeta(z-n\shiftnumber) + \sum\limits_{\substack{\omega\in\allOmegas \\ \omega\neq\widehat{\omega}}} \ores(f,\omega,1)\zeta(z-q_{\omega}) \nonumber\\
        & - \sum\limits_{\substack{\allOmegas \\ \omega\neq\widehat{\omega}}} \ores(f,\omega,1)\zeta(z) + \sum\limits_{\substack{\hat{q}\in\mathcal{R} \\ \hat{q}\neq 0}} \sum\limits_{n\in\mathbb{Z}} c_1(f,\hat{q}+n\shiftnumber)\zeta(z-n\shiftnumber)  \nonumber
        \end{align}
        
        \begin{align}
        = \ &  C_{\mathcal{R}}(f) + \sum\limits_{\hat{q}\in\mathcal{R}} \sum\limits_{n\in\mathbb{Z}} c_1(f,\hat{q}+n\shiftnumber)\zeta(z-n\shiftnumber) \nonumber\\
        & + \sum\limits_{\substack{\omega\in\allOmegas \\ \omega\neq\widehat{\omega}}} \ores(f,\omega,1)\zeta(z-q_{\omega}) - \sum\limits_{\substack{\omega\in\allOmegas \\ \omega\neq\widehat{\omega}}} \ores(f,\omega,1)\zeta(z) \label{eq:analyticReduction2}
    \end{align}

    \noindent In this second reduction step we once again have orbital residues congregating at representatives in $\mathcal{R}$, but these orbital residues also congregate at $0$ by virtue of being coefficients of $\zeta(z-0)$. Furthermore, we witness the coefficients from other orbits falling onto the elements of the set $\mathbb{Z}\shiftnumber$.

    We move to the third and final reduction step. Here we assume that $f$ only has simple poles and they are all in $\mathbb{Z}\shiftnumber$. We suppose that $f(z)$ has $\zeta$-expansion
    \[f(z) = C_{\mathcal{R}}(f) + \sum\limits_{\hat{q}\in\mathcal{R}} \sum\limits_{n\in\mathbb{Z}} c_1(f,\hat{q}+n\shiftnumber)\zeta(z-n\shiftnumber),\]
    to continue our reduction from Equation~\eqref{eq:analyticReduction2}. Once again we must handle differences of $\zeta$ functions like in the previous step, but now we must adhere to all poles being in $\mathbb{Z}\shiftnumber$ to not undo previous work.

    The goal for this third step is not to congregate poles at one particular representative in $\mathcal{R}$. Instead, we congregate poles at $0$ and at $\shiftnumber$. There are two analogous methods that we must use at each potential pole $n\shiftnumber$, depending on whether $n\geq 2$ or $n\leq -1$. For $n\geq 2$ the following function is elliptic and summable:
    \[\zeta(z-(n-1)\shiftnumber) - \zeta(z-n\shiftnumber) - \zeta(z) + \zeta(z-\shiftnumber) = (\tau-\Id)[\zeta(z-n\shiftnumber) - \zeta(z-\shiftnumber)]\]

    This gives rise to the following telescoping series for all $n\geq 2$:
    \begin{multline}
        \sum\limits_{k=2}^{n} \left[\zeta(z-(k-1)\shiftnumber) - \zeta(z-k\shiftnumber) - \zeta(z) + \zeta(z-\shiftnumber)\right] \\ = -(n-1)\zeta(z) + n \zeta(z-\shiftnumber) - \zeta(z-n\shiftnumber)\label{eq:analyticReductionTool3}
    \end{multline}

    Therefore we have the following partial reduction:
    \begin{align}
        & f(z) + \sum\limits_{\hat{q}\in\mathcal{R}} \sum\limits_{n\geq 2} c_1(f,\hat{q}+n\shiftnumber)\big[-(n-1)\zeta(z) + n \zeta(z-\shiftnumber) - \zeta(z-n\shiftnumber)\big] \nonumber\\
        = \ & C_{\mathcal{R}}(f) + \sum\limits_{\hat{q}\in\mathcal{R}} \bigg\{  \sum\limits_{n\leq 1} c_1(f,\hat{q}+n\shiftnumber)\zeta(z-n\shiftnumber) - \sum\limits_{n\geq 2} n\cdot c_1(f,\hat{q}+n\shiftnumber) \zeta(z)\nonumber\\
        &  + \sum\limits_{n\geq 2} c_1(f,\hat{q}+n\shiftnumber) \zeta(z) + \sum\limits_{n\geq 2} n\cdot c_1(f,\hat{q}+n\shiftnumber) \zeta(z-\shiftnumber)\bigg\} \label{eq:analyticReduction3.1}
    \end{align}

    Now for $n\leq -1$ the following function is elliptic and summable:    
    \begin{multline}
        \zeta(z-(n+1)\shiftnumber) - \zeta(z-n\shiftnumber) + \zeta(z) - \zeta(z-\shiftnumber) \\ = (\tau-\Id)[\zeta(z-\shiftnumber) - \zeta(z-(n+1)\shiftnumber)]\quad\text{ for }n\leq -1.
    \end{multline}
    The following series telescopes for $n\leq -1$ (be wary of signs):
    \begin{multline}
    \sum\limits_{k=n}^{-1}[\zeta(z-(k+1)\shiftnumber) - \zeta(z-k\shiftnumber) + \zeta(z) - \zeta(z-\shiftnumber)]  \\ = -\zeta(z-n\shiftnumber) + (-n+1)\zeta(z) + n\zeta(z-\shiftnumber)\label{eq:analyticReductionTool4}
    \end{multline}

    We now complete the reduction, importing the previous expressions from Equation~\eqref{eq:analyticReduction3.1}.

    \begin{align}
        & f(z) + \sum\limits_{\hat{q}\in\mathcal{R}} \sum\limits_{n\geq 2} c_1(f,\hat{q}+n\shiftnumber)\big[-(n-1)\zeta(z) + n \zeta(z-\shiftnumber) - \zeta(z-n\shiftnumber)\big] \nonumber\\
        &+ \sum\limits_{\hat{q}\in\mathcal{R}} \sum\limits_{n\leq -1} c_1(f,\hat{q}+n\shiftnumber)\big[-\zeta(z-n\shiftnumber) + (-n+1)\zeta(z) + n\zeta(z-\shiftnumber)\big] \nonumber\\
        = \ & C_{\mathcal{R}}(f) + \sum\limits_{\hat{q}\in\mathcal{R}} \bigg\{  \sum\limits_{n\leq 1} c_1(f,\hat{q}+n\shiftnumber)\zeta(z-n\shiftnumber) - \sum\limits_{n\geq 2} n\cdot c_1(f,\hat{q}+n\shiftnumber) \zeta(z)\nonumber\\
        &  + \sum\limits_{n\geq 2} c_1(f,\hat{q}+n\shiftnumber) \zeta(z) + \sum\limits_{n\geq 2} n\cdot c_1(f,\hat{q}+n\shiftnumber) \zeta(z-\shiftnumber)\bigg\} \nonumber\\
        &+ \sum\limits_{\hat{q}\in\mathcal{R}} \bigg\{ -\sum\limits_{n\leq -1} c_1(f,\hat{q}+n\shiftnumber)\zeta(z-ns) - \sum\limits_{n\leq -1} n\cdot c_1(f,\hat{q}+n\shiftnumber)\zeta(z) \nonumber\\
        &+ \sum\limits_{n\leq -1} c_1(f,\hat{q}+n\shiftnumber)\zeta(z) + \sum\limits_{n\leq -1} n\cdot c_1(f,\hat{q}+n\shiftnumber)\zeta(z-\shiftnumber)\bigg\} \nonumber\\
        = \ & C_{\mathcal{R}}(f) + \sum\limits_{\hat{q}\in\mathcal{R}} \bigg\{  \sum\limits_{n=0}^{1} c_1(f,\hat{q}+n\shiftnumber)\zeta(z-n\shiftnumber) - \sum\limits_{\substack{n\in\mathbb{Z} \\ n\not\in\{0,1\}}} n\cdot c_1(f,\hat{q}+n\shiftnumber) \zeta(z)\nonumber\\
        &  + \sum\limits_{\substack{n\in\mathbb{Z} \\ n\not\in\{0,1\}}} c_1(f,\hat{q}+n\shiftnumber) \zeta(z) + \sum\limits_{\substack{n\in\mathbb{Z} \\ n\not\in\{0,1\}}} n\cdot c_1(f,\hat{q}+n\shiftnumber) \zeta(z-\shiftnumber)\bigg\} \nonumber
        \end{align}
        \begin{align}
        = \ & C_{\mathcal{R}}(f) + \sum\limits_{\hat{q}\in\mathcal{R}} \bigg\{ - \sum\limits_{n\in\mathbb{Z}} n\cdot c_1(f,\hat{q}+n\shiftnumber) \zeta(z)  + \sum\limits_{n\in\mathbb{Z}} c_1(f,\hat{q}+n\shiftnumber) \zeta(z) \nonumber\\
        & + \sum\limits_{n\in\mathbb{Z}} n\cdot c_1(f,\hat{q}+n\shiftnumber) \zeta(z-\shiftnumber)\bigg\} \nonumber\\
        = \ & C_{\mathcal{R}}(f) + \sum\limits_{\omega\in \allOmegas}\ores(f,\omega,1)\zeta(z) \nonumber\\
        &+ \sum\limits_{\hat{q}\in\mathcal{R}} \sum\limits_{n\in\mathbb{Z}} n\cdot c_1(f,\hat{q}+n\shiftnumber)(\zeta(z-\shiftnumber)-\zeta(z)).
        \label{eq:analyticReduction3.2}
    \end{align}

We are at the end of our reduction of $f(z)$. Now recall Lemma~\ref{lem:analyticLPSummable}: any nonzero function of the form $\alpha_1+\alpha_2[\zeta(z-\shiftnumber)-\zeta(z)]$ for $\alpha_1,\alpha_2\in\mathbb{C}$ is not summable. Thus if the orbital residues of $f$ all vanish, then $f$ is summable if and only if the remaining expression
\[C_{\mathcal{R}}(f) + \sum\limits_{\hat{q}\in\mathcal{R}} \sum\limits_{n\in\mathbb{Z}} n\cdot c_1(f,\hat{q}+n\shiftnumber)(\zeta(z-\shiftnumber)-\zeta(z))\]
vanishes. These coefficients are precisely what we must extract to complete our obstruction.

\begin{definition}
\label{def:analyticPanorbital}
    The \defemph{analytic panorbital residues} of $f\in \fieldC$ relative to $\mathcal{R}$ of orders $0$ and $1$ are, respectively, \vspace{-.1in}\begin{gather*}\vphantom{\frac{a}{b}}\pano_\mathcal{R}(f,0):=C_{\mathcal{R}}(f) \qquad\text{and}\\ \pano_\mathcal{R}(f,1):=\smash{\sum_{\hat{q}\,\in\,\mathcal{R}}\ \sum_{n\,\in\,\mathbb{Z}} \ n\cdot c_1(f,\hat{q}+n\shiftnumber)}\end{gather*}
    \phantom{.}
\end{definition}

\noindent Combining the reductions \eqref{eq:analyticReduction1}, \eqref{eq:analyticReduction2}, and \eqref{eq:analyticReduction3.2} of equation~\eqref{eq:analyticReductionStart} gives us the following complete reduction of $f(z)$.

\begin{theorem}
\label{thm:analyticReduction}
    Let $f\in\fieldC$ be elliptic and let $\mathcal{R}$ be an analytic pinning, and let the following be the $\zeta$-expansion for $f$ guaranteed from Corollary~\ref{cor:pinnedZetaExp}:
    \begin{equation}
        f(z) = C_{\mathcal{R}}(f)+\sum\limits_{\hat{q}\in\mathcal{R}} \sum\limits_{n\in\mathbb{Z}}\sum\limits_{j\in\mathbb{N}} \frac{c_j(f,\hat{q}+n\shiftnumber)(-1)^{j-1}}{(j-1)!}\cdot\frac{d^{j-1}}{dz^{j-1}} \zeta(z-\hat{q}-n\shiftnumber).
    \end{equation}

    Then there exists $g\in\fieldC$ such that
    \begin{align*}
        f - [\shift(g)-g] = \ & \pano_\mathcal{R}(f,0) + \pano_\mathcal{R}(f,1)(\zeta(z-\shiftnumber)-\zeta(z))\\
        &+ \sum\limits_{\omega\in\allOmegas} \sum\limits_{j\geq 1} \frac{\ores(f,\omega,j)(-1)^{j-1}}{(j-1)!}\cdot\frac{d^{j-1}}{dz^{j-1}} \zeta(z-q_{\omega})
    \end{align*}
\end{theorem}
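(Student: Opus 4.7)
The plan is to construct the reducing function $g$ in three successive stages, corresponding exactly to the three strata of pole behavior: poles of order $\geq 2$, simple poles off the distinguished orbit $\widehat{\omega}$, and simple poles on $\widehat{\omega}$. At each stage I use the fact that for any elliptic $h \in \fieldC$ and any $n \in \mathbb{Z}$, the expression $\shift^n(h) - h$ is summable by the telescoping identity of Example~\ref{ex:shiftn}, so it suffices to exhibit elliptic building blocks whose iterated shifts annihilate the relevant poles.

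First I handle the terms of the $\zeta$-expansion with $j \geq 2$. For such $j$, $\tfrac{d^{j-1}}{dz^{j-1}} \zeta(z - \hat{q} - n\shiftnumber)$ equals $(-1)^{j-1}$ times a derivative of $\wp$ and is therefore genuinely elliptic. Applying $\shift^n - \Id$ to it produces a summable function that shifts the order-$j$ principal part from $\hat{q} + n\shiftnumber$ to $\hat{q}$. Summing these corrections with the coefficients $\tfrac{c_j(f, \hat{q} + n\shiftnumber)}{(j-1)!}$ collapses all order-$\geq 2$ poles on each orbit onto its representative $q_\omega$, with resulting coefficient $\ores(f, \omega, j)$ by the very definition of orbital residue. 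This reproduces the higher-order terms of the claimed right-hand side.

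Second, assuming $f$ now has only simple poles, I exploit Corollary~\ref{cor:zetaDifferenceIsElliptic}: the function $\zeta(z - \hat{q} - n\shiftnumber) - \zeta(z - n\shiftnumber)$ is elliptic, so $(\shift^n - \Id)$ applied to it is summable and has four simple poles at $0$, $\hat{q}$, $n\shiftnumber$, and $\hat{q} + n\shiftnumber$. Summing the appropriate weighted combinations over $\hat{q} \neq 0$ and $n \in \mathbb{Z}$ transfers every simple pole on an off-orbit to its representative $q_\omega$, at the cost of depositing a $-\sum_{\omega \neq \widehat{\omega}} \ores(f, \omega, 1) \zeta(z)$ term and of redistributing all off-orbit simple-pole coefficients onto the arithmetic progression $\mathbb{Z}\shiftnumber$ within $\widehat{\omega}$.

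The third and hardest step cleans up the simple poles now stacked on $\mathbb{Z}\shiftnumber$. Here one cannot cancel them outright: Theorem~\ref{thm:balancingGame}(a) forces sums of residues to vanish, so any surviving simple-pole piece must involve at least two points. The strategy is to telescope all remaining poles down to the pair $\{0, \shiftnumber\}$ using $(\shift - \Id)[\zeta(z - n\shiftnumber) - \zeta(z - \shiftnumber)]$ for $n \geq 2$ together with a mirrored identity for $n \leq -1$. The main obstacle is the bookkeeping: one must track how each coefficient $n \cdot c_1(f, \hat{q} + n\shiftnumber)$ accumulates on $\zeta(z)$ versus $\zeta(z - \shiftnumber)$ after telescoping, and how the residual $\zeta(z)$-contributions combine with the $-\sum_{\omega \neq \widehat{\omega}} \ores(f, \omega, 1) \zeta(z)$ inherited from step two. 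When the dust settles, the coefficient of $\zeta(z - \shiftnumber) - \zeta(z)$ is exactly $\pano_\mathcal{R}(f, 1)$, the remaining constant is $\pano_\mathcal{R}(f, 0) = C_\mathcal{R}(f)$, and the net $\zeta(z)$-contribution reduces to $\ores(f, \widehat{\omega}, 1) \zeta(z - q_{\widehat{\omega}})$ (using $q_{\widehat{\omega}} = 0$), which combines with the surviving off-orbit terms from step two to form $\sum_\omega \ores(f, \omega, 1) \zeta(z - q_\omega)$. Taking $g$ to be the sum of the three correcting functions built along the way yields the claimed identity.
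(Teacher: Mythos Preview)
Your proposal is correct and follows essentially the same three-stage reduction as the paper's own derivation in Subsection~\ref{ss:reductionSearch}: collapsing higher-order poles via $\shift^n-\Id$ applied to derivatives of $\zeta$, then moving simple poles off $\widehat{\omega}$ to the representatives using the elliptic functions $\zeta(z-\hat{q}-n\shiftnumber)-\zeta(z-n\shiftnumber)$, and finally telescoping the remaining simple poles on $\mathbb{Z}\shiftnumber$ down to $\{0,\shiftnumber\}$ with separate identities for $n\geq 2$ and $n\leq -1$. The bookkeeping you describe for how $\pano_{\mathcal{R}}(f,1)$ emerges as the coefficient of $\zeta(z-\shiftnumber)-\zeta(z)$ and how the $\zeta(z)$-contributions recombine into $\ores(f,\widehat{\omega},1)\zeta(z-q_{\widehat{\omega}})$ matches the paper's computations in equations~\eqref{eq:analyticReduction1}, \eqref{eq:analyticReduction2}, and~\eqref{eq:analyticReduction3.2}.
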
 \medskip \medskip

\noindent We are now able to state and prove the full analytic analogue of \cite[Proposition~B.8]{Dreyfus2018}.

\begin{corollary}
\label{cor:analyticDHRS}
    Let $f\in\fieldC$ be elliptic, and let $\mathcal{R}$ be an analytic pinning. The following are equivalent:
    \begin{enumerate}[label=(\roman*)]
    \item There exists elliptic $g\in\fieldC$ and constants $a,b\in\mathbb{C}$ such that
    \[f - (\shift(g)-g) = a + b(\zeta(z+\shiftnumber) - \zeta(z)).\]
    \item For all orbits $\omega\in\allOmegas$ and all $j\geq 1$,
    \[\ores(f,\omega,j)=0.\]
    \end{enumerate}
\end{corollary}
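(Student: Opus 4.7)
The plan is to derive this corollary as a clean consequence of the reduction formula in Theorem~\ref{thm:analyticReduction}, with a small additional computation to reconcile the sign difference between $\zeta(z-\shiftnumber)-\zeta(z)$, which appears in the theorem, and $\zeta(z+\shiftnumber)-\zeta(z)$, which appears in the corollary statement.

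For the direction $(i) \Rightarrow (ii)$, I would exploit additivity of orbital residues together with Lemma~\ref{lem:analyticOrbitalResidues}, which already disposes of the $\shift(g)-g$ summand, and observe that a constant contributes no poles. So the work reduces to computing the orbital residues of $h(z) := \zeta(z+\shiftnumber)-\zeta(z)$ directly. This function is elliptic by Corollary~\ref{cor:zetaDifferenceIsElliptic}, and it has exactly two simple poles in $\mathbb{C}/\Lambda$: at $z=0$ (coming from $-\zeta(z)$, with coefficient $-1$) and at $z=-\shiftnumber$ (coming from $\zeta(z+\shiftnumber)$, with coefficient $+1$). Both poles lie in $\widehat{\omega}$, and the two principal-part coefficients cancel, so $\ores(h,\widehat{\omega},1)=0$, while every other orbital residue of $h$ is trivially zero.

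For the direction $(ii) \Rightarrow (i)$, I would apply Theorem~\ref{thm:analyticReduction} to $f$: the hypothesis on orbital residues kills every term in the final sum, leaving
\[f - \bigl(\shift(g_0)-g_0\bigr) = \pano_\mathcal{R}(f,0) + \pano_\mathcal{R}(f,1)\bigl(\zeta(z-\shiftnumber)-\zeta(z)\bigr)\]
for some elliptic $g_0\in\fieldC$. The remaining task is to rewrite $\tilde{h}(z):=\zeta(z-\shiftnumber)-\zeta(z)$ in terms of $h$ modulo a summable elliptic function. Note that $\tilde{h}$ is itself elliptic, and computing $\shift(\tilde{h}) = \zeta(z)-\zeta(z+\shiftnumber) = -h$ gives $\shift(-\tilde{h})-(-\tilde{h}) = h+\tilde{h}$. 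Thus $\tilde{h} \equiv -h$ modulo the summable subspace, witnessed by $g_1 := -\tilde{h} \in \fieldC$; substituting and setting $g := g_0 + \pano_\mathcal{R}(f,1)\, g_1$ produces the expression in (i) with $a = \pano_\mathcal{R}(f,0)$ and $b = -\pano_\mathcal{R}(f,1)$.

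The step I expect to be the subtle one is precisely this last rewriting. Although verifying $h+\tilde{h}$ is summable is a short calculation once one spots the witness $g_1=-\tilde{h}$, it is easy to slip a sign or to confuse summability \emph{by an elliptic $g$} with the weaker fact that $\shift(-\zeta)-(-\zeta)=-h$ for the non-elliptic function $-\zeta$. Being explicit that $g_1$ lives in $\fieldC$ rather than in the ambient field of meromorphic functions on $\mathbb{C}$ is what makes the conversion legitimate and correctly closes the loop back to the elliptic summability notion in $(i)$.
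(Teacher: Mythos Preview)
Your proof is correct and follows the same route as the paper: both directions come out of Theorem~\ref{thm:analyticReduction}. The paper's own proof is a single sentence asserting that $a$ and $b$ are the panorbital residues, which tacitly handles only $(ii)\Rightarrow(i)$ and does not address the $\zeta(z-\shiftnumber)$ versus $\zeta(z+\shiftnumber)$ discrepancy at all. Your treatment is strictly more careful: you supply the $(i)\Rightarrow(ii)$ direction explicitly via Lemma~\ref{lem:analyticOrbitalResidues} and a direct residue count for $\zeta(z+\shiftnumber)-\zeta(z)$, and your summability witness $g_1=-\tilde{h}\in\fieldC$ cleanly converts $\zeta(z-\shiftnumber)-\zeta(z)$ into $-\bigl(\zeta(z+\shiftnumber)-\zeta(z)\bigr)$ modulo a summable term, yielding $b=-\pano_{\mathcal{R}}(f,1)$ rather than $+\pano_{\mathcal{R}}(f,1)$. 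That sign correction is exactly what the paper's terse proof elides.
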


\begin{proof}
    In fact $a$ and $b$ are respectively the $0$th and $1$st panorbital residues with respect to the analytic pinning $\mathcal{R}$ by which $f$ is reduced.
\end{proof}

\subsection{Statement of the Criterion Via Panorbital Residues}
\label{ss:analyticStatement}

Theorem~\ref{thm:analyticReduction}, in combination with Lemma~\ref{lem:analyticOrbitalResidues} and Lemma~\ref{lem:analyticLPSummable}, show the following obstruction for summability.

\begin{corollary}
\label{cor:analyticObstruction}
    Let $f\in\fieldC$ be elliptic and let $\mathcal{R}$ be an analytic pinning. Then $f$ is summable if and only if all the analytic orbital residues of $f$ and both the analytic panorbital residues of $f$ relative to $\mathcal{R}$ vanish.
\end{corollary}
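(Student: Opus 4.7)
The plan is to combine the three previously established results: Theorem~\ref{thm:analyticReduction} (the reduction formula), Lemma~\ref{lem:analyticOrbitalResidues} (orbital residues are invariant under $\shift-\Id$), and Lemma~\ref{lem:analyticLPSummable} (nonzero combinations of $1$ and $\zeta(z+\shiftnumber)-\zeta(z)$ are not summable). The equivalence is essentially the content of reading Theorem~\ref{thm:analyticReduction} through the lens of summability, so the corollary is a straightforward packaging step; my job is to verify that both implications fall out cleanly.

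For the forward direction, I would assume $f$ is summable. First I would apply Lemma~\ref{lem:analyticOrbitalResidues} directly to conclude that every orbital residue $\ores(f,\omega,j)$ vanishes. Next, I would feed this information into the reduction formula from Theorem~\ref{thm:analyticReduction}: the entire sum indexed by $\omega\in\allOmegas$ and $j\geq 1$ collapses to zero, leaving
\[f - [\shift(g)-g] = \pano_\mathcal{R}(f,0) + \pano_\mathcal{R}(f,1)\bigl(\zeta(z-\shiftnumber)-\zeta(z)\bigr)\]
for some $g\in\fieldC$. Because the left-hand side is summable (as a difference of a summable function with a summable function), the right-hand side is a summable elliptic function of the exact form treated in Lemma~\ref{lem:analyticLPSummable}. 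That lemma then forces both panorbital residues $\pano_\mathcal{R}(f,0)$ and $\pano_\mathcal{R}(f,1)$ to vanish.

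For the backward direction, I would assume that all orbital residues and both panorbital residues of $f$ relative to $\mathcal{R}$ are zero. Then Theorem~\ref{thm:analyticReduction} immediately yields $f - [\shift(g)-g] = 0$ for some $g\in\fieldC$, i.e., $f = \shift(g)-g$, so $f$ is summable by definition.

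I do not anticipate a hard step: the real work was done in producing the reduction in Theorem~\ref{thm:analyticReduction} and in isolating the obstruction in Lemma~\ref{lem:analyticLPSummable}. The only subtlety worth flagging is the minor sign/shift distinction between $\zeta(z-\shiftnumber)-\zeta(z)$ appearing in the reduction and $\zeta(z+\shiftnumber)-\zeta(z)$ appearing in Lemma~\ref{lem:analyticLPSummable}; these differ only by an application of $\shift$ (which preserves summability) and an overall sign (which preserves the nonzero condition), so the nonsummability conclusion transfers without incident. Once this is noted, both implications are immediate.
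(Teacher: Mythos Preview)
Your proposal is correct and follows exactly the approach the paper takes: the paper simply states that the corollary follows from Theorem~\ref{thm:analyticReduction} in combination with Lemma~\ref{lem:analyticOrbitalResidues} and Lemma~\ref{lem:analyticLPSummable}, without spelling out the two implications. Your writeup is a faithful (and more detailed) unpacking of precisely that combination, and your remark about the harmless $\zeta(z-\shiftnumber)$ versus $\zeta(z+\shiftnumber)$ discrepancy is apt.
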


\section{Discussion of the Results}

We have found a complete obstruction to summability of elliptic functions. Our methods required the use of an analytic pinning relative to which the obstruction is calculated. An immediate question is how choices of the analytic pinning affect the obstruction of Corollary~\ref{cor:analyticObstruction} and the reduction in Theorem~\ref{thm:analyticReduction}. We discuss this in more detail in Subsection~\ref{ss:choiceOfAnalyticPinning}.

Knowing the obstruction a priori creates the possibility for proofs that circumvent the need for deriving the reduction. We provide such a proof in Subsection~\ref{ss:analyticNiceProof}.

\subsection{Effects of the Choice of Analytic Pinning}
\label{ss:choiceOfAnalyticPinning}

While the orbital residues of an elliptic function are not associated with an analytic pinning $\mathcal{R}$, the panorbital residues require such a choice of $\mathcal{R}$ to even be defined. The effects of this choice can be made concrete by analyzing when two analytic pinnings differ on a single orbit $\omega\in\allOmegas$. This suffices. Even if two analytic pinnings differ on infinitely many orbits, the panorbital residues of any given elliptic function are computed as sums of finite support.

\begin{proposition}
\label{prop:changingAnalyticPinnings}
    Let $\mathcal{R}_1$ and $\mathcal{R}_2$ be two analytic pinnings with the same representatives, except for exactly one orbit $\omega\in\allOmegas$ for which $q_{\omega}\in\mathcal{R}_1$ has been replaced with $q_{\omega}+k\shiftnumber+\lambda\in\mathcal{R}_2$ where $k\in\mathbb{Z}$ and $\lambda\in\Lambda$. Let $f\in\fieldC$ be elliptic. Then:
    \begin{enumerate}[label=\arabic*.]
    \item $\pano_{\mathcal{R}_2}(f,0) = \pano_{\mathcal{R}_1}(f,0) + \eta(\lambda)\cdot\ores(f,\omega,1)$,
    \item $\pano_{\mathcal{R}_2}(f,1) = \pano_{\mathcal{R}_1}(f,1) -k\cdot\ores(f,\omega,1)$.
    \end{enumerate}
\end{proposition}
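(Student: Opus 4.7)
The plan is to prove both items in parallel by applying Theorem~\ref{thm:analyticReduction} with respect to each of the two pinnings and comparing the resulting normal forms, exploiting the quasi-periodicity of $\zeta$ to pick up the correct $\eta(\lambda)$ and the shift-index $k$. I expect item 2 is also independently verifiable by a direct reindexing, which I would use as a sanity check.

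First, I would dispatch item 2 by direct computation from Definition~\ref{def:analyticPanorbital}. Since the principal-part coefficients $c_1(f,\cdot)$ are $\Lambda$-invariant (as $f$ itself is $\Lambda$-periodic), only the contribution from the orbit $\omega$ differs between the two pinnings. Writing $\hat{q}' = q_\omega + k\shiftnumber + \lambda$ and substituting $m = n+k$ in the inner sum converts
$\sum_n n\cdot c_1(f,\hat{q}'+n\shiftnumber)$ into $\sum_m (m-k)\cdot c_1(f,q_\omega + m\shiftnumber)$, whose second piece is $-k\cdot\ores(f,\omega,1)$.

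For item 1, I would apply Theorem~\ref{thm:analyticReduction} with respect to both $\mathcal{R}_1$ and $\mathcal{R}_2$, obtaining $g_1,g_2\in\fieldC$ so that the two reduced forms of $f$ agree except in: (i) their constant terms $\pano_{\mathcal{R}_i}(f,0)$, (ii) the coefficients $\pano_{\mathcal{R}_i}(f,1)$ in front of $\zeta(z-\shiftnumber)-\zeta(z)$, and (iii) the $j$-sum at the orbit $\omega$, where the base point changes from $q_\omega$ to $q_\omega+k\shiftnumber+\lambda$. Subtracting the two reductions, the left-hand side $\tau(g_2-g_1)-(g_2-g_1)$ is summable by construction. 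For every $j\geq 2$ the difference
\[
\zeta^{(j-1)}(z-q_\omega) - \zeta^{(j-1)}(z-q_\omega-k\shiftnumber-\lambda)
\]
is summable, because $\zeta^{(j-1)}$ is $\Lambda$-periodic for $j\geq 2$ (it is a derivative of $-\wp$), so the $\lambda$-shift disappears and equation~\eqref{eq:analyticReductionTool1} applies. The substantive calculation is the $j=1$ case.

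Here the quasi-period relation $\zeta(z+\lambda)=\zeta(z)+\eta(\lambda)$ together with identity~\eqref{eq:analyticReductionTool2} (to trade $\zeta(z-q_\omega)-\zeta(z-q_\omega-k\shiftnumber)$ for $\zeta(z)-\zeta(z-k\shiftnumber)$ modulo summables) and the telescoping identities~\eqref{eq:analyticReductionTool3} and~\eqref{eq:analyticReductionTool4} (to rewrite $\zeta(z-k\shiftnumber)$ as $\zeta(z)+k[\zeta(z-\shiftnumber)-\zeta(z)]$ modulo summables, uniformly in the sign of $k$) yield
\[
\zeta(z-q_\omega)-\zeta(z-q_\omega-k\shiftnumber-\lambda) = -k\bigl[\zeta(z-\shiftnumber)-\zeta(z)\bigr] + \eta(\lambda) + \text{(summable)}.
\]
Substituting this back, the summability of the full difference of reductions becomes
\[
\bigl[\pano_{\mathcal{R}_1}(f,0)-\pano_{\mathcal{R}_2}(f,0)+\eta(\lambda)\ores(f,\omega,1)\bigr] + \bigl[\pano_{\mathcal{R}_1}(f,1)-\pano_{\mathcal{R}_2}(f,1)-k\,\ores(f,\omega,1)\bigr]\bigl(\zeta(z-\shiftnumber)-\zeta(z)\bigr)
\]
being summable modulo something summable. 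By Lemma~\ref{lem:analyticLPSummable}, $1$ and $\zeta(z-\shiftnumber)-\zeta(z)$ are linearly independent modulo summable elements, so each bracketed scalar must vanish; these are precisely items 1 and 2.

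The main obstacle I anticipate is the bookkeeping in the $j=1$ step: assembling~\eqref{eq:analyticReductionTool2}, \eqref{eq:analyticReductionTool3}, and~\eqref{eq:analyticReductionTool4} to show uniformly (for all $k\in\mathbb{Z}$, with attention to the $k\geq 2$ vs. $k\leq -1$ case split and the trivial $k\in\{-1,0,1\}$ cases) that $\zeta(z-q_\omega)-\zeta(z-q_\omega-k\shiftnumber)$ equals $-k[\zeta(z-\shiftnumber)-\zeta(z)]$ modulo summables. Once this is in place, extracting the $\eta(\lambda)$ correction from the quasi-period law is immediate and the linear-independence argument closes the proof.
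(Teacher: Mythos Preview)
Your proposal is correct, and for item~2 it coincides with the paper's argument (direct reindexing using $\Lambda$-periodicity of the principal-part coefficients). For item~1, however, your route is genuinely different from the paper's.

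The paper never invokes Theorem~\ref{thm:analyticReduction}. Instead it works directly with the two $\zeta$-expansions of $f$ guaranteed by Corollary~\ref{cor:pinnedZetaExp}: since both expansions equal $f$, their difference is identically zero, and after cancelling all $j\geq 2$ terms (because $\zeta^{(j-1)}$ is elliptic for $j\geq 2$) and reindexing the $j=1$ terms, one is left with the scalar identity $C_{\mathcal{R}_2}(f)-C_{\mathcal{R}_1}(f)=\eta(\lambda)\cdot\ores(f,\omega,1)$ coming straight from the quasi-period relation. This is more elementary: it uses only the definition of $\pano_{\mathcal{R}}(f,0)=C_{\mathcal{R}}(f)$ and the quasi-periodicity of $\zeta$, with no reduction, no telescoping identities~\eqref{eq:analyticReductionTool2}--\eqref{eq:analyticReductionTool4}, and no appeal to Lemma~\ref{lem:analyticLPSummable}. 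Your approach, by contrast, compares the two reduced normal forms of $f$ modulo summables and extracts both items at once from the linear independence of $1$ and $\zeta(z-\shiftnumber)-\zeta(z)$; this is heavier but has the conceptual payoff of showing that the proposition is really a statement about how the normal form depends on the pinning, and it yields items~1 and~2 in a single stroke rather than by separate computations.
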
 \medskip

\begin{proof}
    Take the difference of the two $\zeta$-expansions of $f$ relative to $\mathcal{R}_1$ and $\mathcal{R}_2$ that are guaranteed from Corollary~\ref{cor:pinnedZetaExp}.
        \begin{align*}
        f(z) - f(z)  = \ & C_{\mathcal{R}_2}(f)+\sum\limits_{\hat{q}\in\mathcal{R}_2} \sum\limits_{n\in\mathbb{Z}}\sum\limits_{j\in\mathbb{N}} \frac{c_j(f,\hat{q}+n\shiftnumber)(-1)^{j-1}}{(j-1)!}\cdot\frac{d^{j-1}}{dz^{j-1}} \zeta(z-\hat{q}-n\shiftnumber) \\
        & - C_{\mathcal{R}_1}(f)+\sum\limits_{\hat{q}\in\mathcal{R}_1} \sum\limits_{n\in\mathbb{Z}}\sum\limits_{j\in\mathbb{N}} \frac{c_j(f,\hat{q}+n\shiftnumber)(-1)^{j-1}}{(j-1)!}\cdot\frac{d^{j-1}}{dz^{j-1}} \zeta(z-\hat{q}-n\shiftnumber) \\
        = \ & C_{\mathcal{R}_2}(f) - C_{\mathcal{R}_1}(f) \\
        &+ \sum\limits_{n\in\mathbb{Z}}\sum\limits_{j\in\mathbb{N}} \frac{c_j(f,q_{\omega}+k\shiftnumber+\lambda+n\shiftnumber)(-1)^{j-1}}{(j-1)!}\cdot\frac{d^{j-1}}{dz^{j-1}} \zeta(z-q_{\omega}-k\shiftnumber-\lambda-n\shiftnumber) \\
        & - \sum\limits_{n\in\mathbb{Z}}\sum\limits_{j\in\mathbb{N}} \frac{c_j(f,q_{\omega}+n\shiftnumber)(-1)^{j-1}}{(j-1)!}\cdot\frac{d^{j-1}}{dz^{j-1}} \zeta(z-q_{\omega}-n\shiftnumber).
    \end{align*}
    Because $f$ is elliptic, the coefficients $c_j(f,\alpha)$ of its principal parts are invariant under translates of $\alpha$ by elements of $\Lambda$. Furthermore, the counter variable $n$ in the first summation can be translated by $-k$ to obtain
        \begin{align*}
        0  = \ & C_{\mathcal{R}_2}(f) - C_{\mathcal{R}_1}(f) \\
        &+ \sum\limits_{n\in\mathbb{Z}}\sum\limits_{j\in\mathbb{N}} \frac{c_j(f,q_{\omega}+n\shiftnumber)(-1)^{j-1}}{(j-1)!}\cdot\frac{d^{j-1}}{dz^{j-1}} \zeta(z-q_{\omega}-\lambda-n\shiftnumber) \\
        & - \sum\limits_{n\in\mathbb{Z}}\sum\limits_{j\in\mathbb{N}} \frac{c_j(f,q_{\omega}+n\shiftnumber)(-1)^{j-1}}{(j-1)!}\cdot\frac{d^{j-1}}{dz^{j-1}} \zeta(z-q_{\omega}-n\shiftnumber).
    \end{align*}
    Recall that $\dfrac{d^{j-1}}{dz^{j-1}}\zeta(z)$ is elliptic for all $j\geq 2$. Therefore the two summations cancel for $j\geq 2$ and we are left with
    \[ 0  = \  C_{\mathcal{R}_2}(f) - C_{\mathcal{R}_1}(f) 
        + \sum\limits_{n\in\mathbb{Z}}c_1(f,q_{\omega}+n\shiftnumber)\cdot\zeta(z-q_{\omega}-\lambda-n\shiftnumber) 
         - \sum\limits_{n\in\mathbb{Z}}c_1(f,q_{\omega}+n\shiftnumber)\cdot \zeta(z-q_{\omega}-n\shiftnumber).\]
    The $\zeta$ expressions are related precisely by the quasi-period map $\eta$, implying that
    \[C_{\mathcal{R}_2}(f) - C_{\mathcal{R}_1}(f) = \eta(\lambda)\cdot \sum\limits_{n\in\mathbb{Z}} c_1(f,q_{\omega}+n\shiftnumber) = \eta(\lambda)\cdot\ores(f,\omega,1).\]
    This shows item 1. Item 2 can be shown directly:
    \begin{align*}
        &\pano_{\mathcal{R}_2}(f,1) - \pano_{\mathcal{R}_1}(f,1) \\
        = \ &\sum_{\hat{q}\,\in\,\mathcal{R}_2}\ \sum_{n\,\in\,\mathbb{Z}} \ n\cdot c_1(f,\hat{q}+n\shiftnumber) - \sum_{\hat{q}\,\in\,\mathcal{R}_1}\ \sum_{n\,\in\,\mathbb{Z}} \ n\cdot c_1(f,\hat{q}+n\shiftnumber)\\
        = \ & \sum_{n\,\in\,\mathbb{Z}} \ n\cdot c_1(f,q_{\omega}+k\shiftnumber+\lambda+n\shiftnumber) - \sum_{n\,\in\,\mathbb{Z}} \ n\cdot c_1(f,q_{\omega}+n\shiftnumber)\\
        = \ & \sum_{n\,\in\,\mathbb{Z}} \ (n-k)\cdot c_1(f,q_{\omega}+n\shiftnumber) - \sum_{n\,\in\,\mathbb{Z}} \ n\cdot c_1(f,q_{\omega}+n\shiftnumber)\\
        = \ & -k\sum\limits_{n\in\mathbb{Z}} c_1(f,q_{\omega}+n\shiftnumber)\\
        = \ & -k\cdot\ores(f,\omega,1).
    \end{align*}
\end{proof}
This gives us a precise statement of how the panorbital residues are affected by choice of analytic pinning. An immediate corollary gives us information on when they are \emph{independent} of the analytic pinning.

\begin{corollary}
    Let $f\in\fieldC$ be an elliptic function such that all the orbital residues of $f$ vanish. Then the panorbital residues $\pano_{\mathcal{R}}(f,0)$ and $\pano_{\mathcal{R}}(f,1)$ are unaffected by the choice of $\mathcal{R}$.
\end{corollary}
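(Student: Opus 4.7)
The plan is to deduce this corollary directly from Proposition~\ref{prop:changingAnalyticPinnings}, which already quantifies exactly how the panorbital residues transform under a single-orbit modification of the analytic pinning. The key observation is that if $\ores(f,\omega,1)=0$ for every orbit $\omega\in\allOmegas$, then both right-hand sides in the two formulas of Proposition~\ref{prop:changingAnalyticPinnings} collapse to $\pano_{\mathcal{R}_1}(f,\cdot)$, so a single-orbit change of pinning leaves both panorbital residues invariant.

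To upgrade this to arbitrary pairs of analytic pinnings $\mathcal{R}_1,\mathcal{R}_2$, I would argue as follows. The panorbital residues of $f$ depend only on the coefficients $c_j(f,\hat{q}+n\shiftnumber)$ appearing in any $\zeta$-expansion of $f$, and these vanish outside the finitely many poles of $f$ in $\mathbb{C}/\Lambda$. Consequently only the representatives chosen on the (finitely many) orbits $\omega\in\allOmegas$ at which $f$ actually has poles can influence $\pano_{\mathcal{R}}(f,0)$ or $\pano_{\mathcal{R}}(f,1)$. Without loss of generality we may therefore assume $\mathcal{R}_1$ and $\mathcal{R}_2$ differ at only finitely many orbits.

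I would then pass from $\mathcal{R}_1$ to $\mathcal{R}_2$ through a finite chain of intermediate analytic pinnings, each differing from its predecessor at exactly one orbit (taking care at each step to preserve the pinning condition $0\in\mathcal{R}$, which is automatic since the orbit $\widehat{\omega}$ does not need to be altered if we choose the chain wisely). Proposition~\ref{prop:changingAnalyticPinnings} applies at each step, and the hypothesis that every orbital residue $\ores(f,\omega,1)$ vanishes forces the differences $\pano_{\mathcal{R}_{i+1}}(f,k)-\pano_{\mathcal{R}_i}(f,k)$ to be zero for $k=0,1$. Telescoping along the chain gives $\pano_{\mathcal{R}_1}(f,k)=\pano_{\mathcal{R}_2}(f,k)$, as desired.

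There is no real obstacle here since the previous proposition has already done all the technical work; the only subtlety worth highlighting is the justification that an arbitrary change of analytic pinning reduces to finitely many single-orbit changes, which rests on the finite support of the coefficient family $\{c_j(f,\cdot)\}$ in $\mathbb{C}/\Lambda$.
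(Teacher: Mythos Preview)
Your proposal is correct and follows precisely the approach the paper intends: the corollary is stated as an immediate consequence of Proposition~\ref{prop:changingAnalyticPinnings}, and your argument (single-orbit changes leave the panorbital residues fixed when $\ores(f,\omega,1)=0$, then chain finitely many such changes) is exactly the right way to make that word ``immediate'' rigorous. If anything, you have supplied more detail than the paper does, including the observation that only finitely many orbits matter because $f$ has finitely many poles in $\mathbb{C}/\Lambda$.
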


\subsection{A Tidier Proof of Theorem~\ref{thm:analyticReduction}}
\label{ss:analyticNiceProof}

Now that we have found the required definition of analytic panorbital residues, we can work to show Theorem~\ref{thm:analyticReduction} with their knowledge a priori. We restate the reduction here for convenience before any simplifications: given elliptic $f\in\fieldC$ and analytic pinning $\mathcal{R}$, there exists $a\in\fieldC$ such that
    \begin{align}
        f - [\shift(a)-a] = \ & \pano_\mathcal{R}(f,0) + \pano_\mathcal{R}(f,1)(\zeta(z-\shiftnumber)-\zeta(z))\nonumber\\
        &+ \ores(f,\widehat{\omega},1)\zeta(z) + \sum\limits_{\substack{\omega\in\allOmegas \\\omega\neq\widehat{\omega}}} \ores(f,\omega,1)\zeta(z-q_{\omega}) \nonumber\\
        &+ \sum\limits_{\omega\in\allOmegas} \sum\limits_{j\geq 2} \frac{\ores(f,\omega,j)(-1)^{j-1}}{(j-1)!}\cdot\frac{d^{j-1}}{dz^{j-1}} \zeta(z-q_{\omega})\label{eq:finalReduction}
    \end{align}

    The proof method will involve analyzing the reduction in the following way:
    \begin{enumerate}[label=\arabic*.]
    \item Show that all poles of the above reduction are contained in $\mathcal{R}\cup \{\shiftnumber\}$,
    \item Use the fact that the orbital \emph{and} panorbital residues are invariant under the reduction process to conclude that the above reduction \emph{must} be correct.
    \end{enumerate}

    \begin{proof}
    First, note that under the reduction of Subsection~\ref{ss:analyticReduction} the constant term of the $\zeta$-expansion is never changed, so $\pano_{\mathcal{R}}(f,0)$ is indeed the correct constant coefficient. Furthermore, the reduction is done by adding summable functions, which have vanishing orbital residues from Lemma~\ref{lem:analyticOrbitalResidues}, so the orbital residues remain invariant through every step of the process.
    
    \begin{enumerate}[label=\arabic*.]
        \item In the first step of the reduction, we cancelled poles of order $j\geq 2$ by using the summable elliptic functions
        \[\frac{d^{j-1}}{dz^{j-1}} \zeta(z-q_{\omega}) - \frac{d^{j-1}}{dz^{j-1}} \zeta(z-q_{\omega}-n\shiftnumber)\]
        from Equation~\eqref{eq:analyticReductionTool1}. The orbital residues of these functions are $0$ due to summability and the panorbital residues are $0$ because these functions have $\zeta$-expansions with no terms with lower-order poles. Thus the orbital and panorbital residues of $f$ are not changed in this reduction. Once all higher-order poles of $f$ are accumulated at $f$ the resulting part of the $\zeta$-expansion must indeed be
        \[\sum\limits_{\omega\in\allOmegas} \sum\limits_{j\geq 2} \frac{\ores(f,\omega,j)(-1)^{j-1}}{(j-1)!}\cdot\frac{d^{j-1}}{dz^{j-1}} \zeta(z-q_{\omega}),\]
        which does indeed match the higher order terms of Equation~\eqref{eq:finalReduction}.

        In the second step of the reduction, we cancelled poles of order $1$ on orbits $\omega\in\allOmegas$ excluding $\widehat{\omega}$ by using
        \[\zeta(z-\hat{q}) - \zeta(z) - \zeta(z-\hat{q}-n\shiftnumber) + \zeta(z-n\shiftnumber)\]
        from Equation~\eqref{eq:analyticReductionTool2}. The orbital residues of this summable function vanish, and the panorbital residue is
        \[0\cdot 1 + 0\cdot (-1) + n\cdot(-1) + n\cdot 1 = 0,\]
        so once again the orbital and panorbital residues of $f$ remain invariant under reduction by these terms. Once the poles on $\omega$ are congregated at $q_{\omega}$, the resulting part of the $\zeta$-expansion must be
        \[\sum\limits_{\substack{\omega\in\allOmegas \\ \omega\neq\widehat{\omega}}} \ores(f,\omega,1)\zeta(z-q_{\omega}),\]
        which does indeed match the reduction in Equation~\eqref{eq:finalReduction}.

        The third step cancels poles on $\widehat{\omega}$ so all the remaining poles are at $0$ and $\shiftnumber$. It remains to be shown that the coefficients of $\zeta(z)$ and $\zeta(z-\shiftnumber)$ are correct. The functions used in the reduction are
        \begin{align*}
            -(n-1)\zeta(z) + n \zeta(z-\shiftnumber) - \zeta(z-n\shiftnumber) & \qquad \text{ for }n\geq 2,\\
            -\zeta(z-n\shiftnumber) + (-n+1)\zeta(z) + n\zeta(z-\shiftnumber) & \qquad \text{ for }n\leq -1,
        \end{align*}
        from Equations~\eqref{eq:analyticReductionTool3} and~\eqref{eq:analyticReductionTool4}. These have vanishing orbital residues because they are summable, and their panorbital residues are respectively
        \begin{align*}
            0\cdot (-n+1) + 1\cdot n + n\cdot (-1) &= 0,\\
            n\cdot (-1) + 0\cdot (-n+1) + 1\cdot n&= 0.
        \end{align*}
        Therefore the orbital and panorbital residues of $f$ are invariant through the entire reduction process. At the end of the reduction, the only possible pole away from $\mathcal{R}$ is contributed by $\zeta(z-\shiftnumber)$, so the only contribution to the panorbital residue of the reduction is by the coefficient of this term. As a consequence, the coefficient of $\zeta(z-\shiftnumber)$ must be $\pano_{\mathcal{R}}(f,1)$. Because the orbital residues of the reduction must also agree with those of $f$, the coefficient of $\zeta(z)$ must be $\ores(f,\widehat{\omega},1)-\pano_{\mathcal{R}}(f,1)$. Both of these coefficients appear in Equation~\eqref{eq:finalReduction}. This proves that the reduction in Equation~\eqref{eq:finalReduction} must be correct. 
        \end{enumerate}
    \end{proof}

\section{Example Application: Logarithmic Derivatives}
\label{s:sigmaExample}

We now discuss an application of the results to the \defemph{logarithmic derivative} $\frac{a'}{a}$ of an elliptic function $a\in\fieldC$. Analyzing the $\zeta$-expansion of $\frac{a'}{a}$ using the data of the zeros and poles of $a$ is difficult with no external knowledge. The way we resolve this is by using an expression of $a$ in terms of another function related to the Weierstrass $\wp$ and $\zeta$-functions, namely the \defemph{Weierstrass $\sigma$-function}. The properties of this function, some of which we mention below, are outlined in \cite[Section~1.5]{silvermanAdvancedTopicsArithmetic1994}.

\begin{definition}
    The \defemph{Weierstrass $\sigma$-function} corresponding to a lattice $\Lambda$ is defined by the infinite product
    \[\sigma(z) = z\prod\limits_{\substack{\lambda\in\Lambda \\ \lambda\neq 0}} \left(1-\frac{z}{\lambda}\right)e^{z/\lambda + (1/2)(z/\lambda)^2}.\]
\end{definition}

One can confirm that $\sigma(z)$ is an entire function on $\mathbb{C}$ with simple zeros on $\Lambda$. What is more, it is \emph{directly} linked to the Weierstrass $\zeta$-function in the following manner.

\begin{lemma}
    The logarithmic derivative of $\sigma(z)$ is
    \[\frac{\sigma'(z)}{\sigma(z)} = \zeta(z).\]
\end{lemma}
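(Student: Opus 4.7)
The plan is to compute the logarithmic derivative of the infinite product directly, term-by-term. For each elementary factor, the logarithmic derivative is easy: the prefactor $z$ contributes $1/z$; the factor $(1-z/\lambda)$ contributes
\[
\frac{d}{dz}\log\!\left(1-\tfrac{z}{\lambda}\right) = \frac{-1/\lambda}{1-z/\lambda} = \frac{1}{z-\lambda};
\]
and the exponential factor $\exp(z/\lambda + (z/\lambda)^2/2)$ contributes $\frac{1}{\lambda} + \frac{z}{\lambda^2}$. Adding these together across all $\lambda\in\Lambda\setminus\{0\}$ and the $1/z$ from the prefactor yields exactly the series defining $\zeta(z)$.

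The real content of the lemma, and the main obstacle, is justifying that $\sigma'/\sigma$ equals the sum of the logarithmic derivatives of the individual factors. This requires showing that the Weierstrass product converges locally uniformly on $\mathbb{C}\setminus\Lambda$ and that its logarithmic derivative can be computed term-by-term. The strategy is to use the standard estimate on the Weierstrass elementary factor $E_2(w)=(1-w)\exp(w+w^2/2)$, namely $|\log E_2(w)|\leq C|w|^3$ for $|w|\leq 1/2$. Fixing a compact set $K\subset\mathbb{C}$, for all but finitely many $\lambda$ one has $|z/\lambda|\leq 1/2$ uniformly in $z\in K$, so the tail $\sum_{|\lambda|>R} \log E_2(z/\lambda)$ is dominated by $C'\sum_{\lambda\neq 0}|\lambda|^{-3}$, which converges (this is precisely the reason why $E_2$ is the Weierstrass factor of choice for a rank-$2$ lattice).

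Given this local uniform convergence, a classical theorem of Weierstrass permits differentiating the logarithm of the product term-by-term on $\mathbb{C}\setminus\Lambda$. The resulting series is exactly
\[
\frac{1}{z} + \sum_{\substack{\lambda\in\Lambda\\ \lambda\neq 0}}\left(\frac{1}{z-\lambda} + \frac{1}{\lambda} + \frac{z}{\lambda^2}\right),
\]
which converges on $\mathbb{C}\setminus\Lambda$ for exactly the same reason (the correction terms $1/\lambda + z/\lambda^2$ are the first two Taylor coefficients of $1/(z-\lambda)$ about $z=0$, making the summand $O(|\lambda|^{-3})$ on compact sets away from $\Lambda$). This sum is by definition $\zeta(z)$, completing the identification $\sigma'(z)/\sigma(z)=\zeta(z)$ on $\mathbb{C}\setminus\Lambda$; the identity extends by continuity where appropriate. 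No heavy machinery beyond Weierstrass's convergence theorem for products and the estimate on $E_2$ is required.
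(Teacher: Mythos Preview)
Your proof is correct and follows the standard term-by-term differentiation of the Weierstrass product, with appropriate justification of convergence via the $E_2$ estimate. The paper, however, does not supply its own proof of this lemma: it simply states the result and refers the reader to \cite[Section~I.5]{silvermanAdvancedTopicsArithmetic1994} for the properties of $\sigma$. So there is nothing to compare against beyond noting that your argument is precisely the classical one found in such references.
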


Finally, similar to $\zeta$-expansions of elliptic functions, there exist expressions of elliptic functions as products of $\sigma$ functions, as stated in \cite[Proposition~5.5]{silvermanAdvancedTopicsArithmetic1994}.

\begin{proposition}
    Consider an elliptic function $a\in\fieldC$. For each $\alpha$ a representative of a coset of $\mathbb{C}/\Lambda$, if $\alpha$ is a zero of $a$ let $m(\alpha)$ be its order, if $\alpha$ is a pole of $a$ let $-m(\alpha)$ be its order, and let $m(\alpha)=0$ otherwise. Let $\alpha_1$, $\dots$, $\alpha_n$ be a complete set of representatives of zeros and poles of $a$, and let
    \[b = \sum\limits_{i=1}^{n} m(\alpha_i)\cdot \alpha_i\in\Lambda.\]
    Then there exists a constant $c\in\mathbb{C}$ such that
    \[a(z) = c\cdot \frac{\sigma(z)}{\sigma(z-b)}\cdot \prod\limits_{i=1}^{n} \sigma(z-\alpha_i)^{m(\alpha_i)}.\]
\end{proposition}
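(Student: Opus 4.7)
The plan is to build an explicit candidate function
\[
F(z) = \frac{\sigma(z)}{\sigma(z-b)}\cdot\prod_{i=1}^{n}\sigma(z-\alpha_i)^{m(\alpha_i)},
\]
verify that $F$ is elliptic and has exactly the same zeros and poles as $a$ with the same multiplicities, and then conclude that $a/F$ is constant by Corollary~\ref{cor:2poles}. The constant $c$ of the statement will then just be the value of $a/F$ at any regular point.

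First I would check that $F$ is well-defined and has the correct divisor on $\mathbb{C}/\Lambda$. Since $\sigma$ has simple zeros precisely at $\Lambda$, the factor $\sigma(z-\alpha_i)^{m(\alpha_i)}$ contributes a zero (or pole) of order $m(\alpha_i)$ at the coset $\alpha_i\bmod\Lambda$. The correction factor $\sigma(z)/\sigma(z-b)$ requires a closer look: because $b\in\Lambda$ (which is guaranteed by Theorem~\ref{thm:balancingGame}(c) applied to $a$), the zeros of $\sigma(z)$ and $\sigma(z-b)$ coincide as subsets of $\mathbb{C}$, both living on $\Lambda$ and both simple. So the ratio extends to an entire, nowhere-vanishing function on $\mathbb{C}$ and contributes nothing to the divisor of $F$. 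Thus the divisors of $F$ and $a$ agree on $\mathbb{C}/\Lambda$.

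The main obstacle will be the ellipticity of $F$, which requires the quasi-periodicity of $\sigma$. Differentiating the identity $\zeta = \sigma'/\sigma$ against Proposition~\ref{prop:zetaProps}(c) shows that for each $\lambda\in\Lambda$ there is a sign $\epsilon(\lambda)\in\{\pm 1\}$ with
\[
\sigma(z+\lambda) = \epsilon(\lambda)\,e^{\eta(\lambda)(z+\lambda/2)}\sigma(z).
\]
Applying this inside the product over $i$ and using the balance condition $\sum_i m(\alpha_i)=0$ from Theorem~\ref{thm:balancingGame}(b), the sign factor disappears and the exponential collects to $e^{-\eta(\lambda)\,b}$ with $b=\sum_i m(\alpha_i)\alpha_i$. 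Applying the quasi-periodicity to the numerator and denominator of $\sigma(z)/\sigma(z-b)$ contributes a compensating factor of $e^{\eta(\lambda)\,b}$. The two exponentials and the two remaining signs cancel exactly, giving $F(z+\lambda)=F(z)$ for all $\lambda\in\Lambda$. Here is precisely where the choice $b=\sum_i m(\alpha_i)\alpha_i$ is forced: any other choice would leave a residual exponential factor and destroy periodicity.

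With $F$ elliptic and sharing the divisor of $a$, the quotient $a/F$ is an elliptic function with no zeros and no poles on $\mathbb{C}/\Lambda$, hence entire as a function on $\mathbb{C}$. Corollary~\ref{cor:2poles} (or Liouville's theorem directly) then forces $a/F$ to be a constant $c\in\mathbb{C}^\times$, yielding the claimed product expression.
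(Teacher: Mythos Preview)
Your argument is correct and is the standard proof of this classical result. The paper does not actually prove this proposition; it merely states it with a citation to \cite[Proposition~I.5.5]{silvermanAdvancedTopicsArithmetic1994}, where the argument proceeds exactly as you outline: build the candidate $\sigma$-product, use the quasi-periodicity formula $\sigma(z+\lambda)=\epsilon(\lambda)e^{\eta(\lambda)(z+\lambda/2)}\sigma(z)$ together with $\sum_i m(\alpha_i)=0$ and $b=\sum_i m(\alpha_i)\alpha_i\in\Lambda$ to verify ellipticity, and then invoke Liouville to conclude that the ratio $a/F$ is constant. One cosmetic remark: your phrase ``the two remaining signs cancel exactly'' is slightly loose, since the product over $i$ contributes $\epsilon(\lambda)^{\sum_i m(\alpha_i)}=\epsilon(\lambda)^0=1$ outright rather than a sign to be cancelled later; the only genuine sign cancellation happens between the numerator and denominator of $\sigma(z)/\sigma(z-b)$.
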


Now choose an analytic pinning $\mathcal{R} = \{q_{\omega}\}$ indexed by $\omega\in\allOmegas$, and express $a\in\fieldC$ as
\[a(z) = c\cdot \frac{\sigma(z)}{\sigma(z-b)}\cdot \prod\limits_{\omega\in\allOmegas}\prod\limits_{n\in\mathbb{Z}} \sigma(z-q_{\omega}-n\shiftnumber)^{m(q_{\omega}+n\shiftnumber)},\]
where $c\in\mathbb{C}$ and
\[b = \sum\limits_{\omega\in\allOmegas}\sum\limits_{n\in\mathbb{Z}} (q_{\omega}+n\shiftnumber)\cdot m(q_{\omega}+n\shiftnumber)\in\Lambda.\]
It follows from the additivity of the logarithmic derivative that the $\sigma$ expression for $a$ above gives rise to a $\zeta$-expansion for $\frac{a'}{a}$:
\begin{align*}\frac{a'}{a} &= \frac{\frac{d}{dz}[c]}{c} + \zeta(z) - \zeta(z-b) + \sum\limits_{\omega\in\allOmegas}\sum\limits_{n\in\mathbb{Z}} m(q_{\omega}+n\shiftnumber)\zeta(z-q_{\omega}-n\shiftnumber)\\
& = \eta(b) + \sum\limits_{\omega\in\allOmegas}\sum\limits_{n\in\mathbb{Z}} m(q_{\omega}+n\shiftnumber)\zeta(z-q_{\omega}-n\shiftnumber),
\end{align*}
where $\eta$ is the quasi-period map of $\Lambda$ as defined in Proposition~\ref{prop:zetaProps}. We immediately see $\pano_{\mathcal{R}}\left(\frac{a'}{a},0\right)=\eta(b)$. The first-order orbital residues of $\frac{a'}{a}$ are
\[\ores\left(\frac{a'}{a},\omega,1\right) = \sum\limits_{n\in\mathbb{Z}} m(q_{\omega}+n\shiftnumber),\]
and there are no higher-order orbital residues. Moreover, the orbital and panorbital residues of $\frac{a'}{a}$ relative to $\mathcal{R}$ are encoded in the value of $b$:
\begin{align*}
b &= \sum\limits_{\omega\in\allOmegas}\sum\limits_{n\in\mathbb{Z}} m(q_{\omega}+n\shiftnumber)\cdot (q_{\omega}+n\shiftnumber) \\
&=  \sum\limits_{\omega\in\allOmegas}\sum\limits_{n\in\mathbb{Z}}ns\cdot  m(q_{\omega}+n\shiftnumber) + \sum\limits_{\omega\in\allOmegas}\sum\limits_{n\in\mathbb{Z}}q_{\omega}\cdot m(q_{\omega}+n\shiftnumber)\\
&=  s\cdot\pano_{\mathcal{R}}\left(\frac{a'}{a},1\right) + \sum\limits_{\omega\in\allOmegas}q_{\omega}\cdot \ores\left(\frac{a'}{a},\omega,1\right).
\end{align*}
In short, the summability of $\frac{a'}{a}$ can be determined completely from the information about the zeros and poles of $a$.

\chapter{Summability in the Algebraic Setting}
\label{c:algebraic}

There is an alternative view of elliptic function fields from a purely algebraic perspective. The construction of these fields can come from multiple directions. A common approach via algebraic geometry is to define an elliptic curve to be a nonsingular projective curve of genus one, along with a choice of base point. This perspective is developed in \cite[Chapter~III]{Silverman2009}. An alternative is to work with discrete valuation rings in an algebraic function field over a base field, and then to formulate purely algebraic analogues of geometric concepts including the genus of a function field and the Riemann-Roch Theorem, as is done in \cite{Stichtenoth2009} and \cite{Goldschmidt2003}. We will not take this second approach: we follow that of \cite{Silverman2009}.

\section{Elliptic Curves as Projective Curves of Genus 1} \label{s:algebraicIntro}

As in Section~\ref{s:analysisReview}, unless otherwise stated, all results in this section come from \cite[Chapter~III]{Silverman2009}. The algebraic theory of elliptic curves has been generalized much farther than those over the field $\mathbb{C}$. Because it is possible to extend the theory at little expense, we concern ourselves here with elliptic curves over a field $K$ of characteristic\footnote{Silverman actually discusses \emph{perfect} fields $K$ in \cite[Chapter~III]{Silverman2009}. By definition fields of characteristic $0$ are perfect: see \cite[Definition~4.16]{aluffiAlgebraChapter02021}.} $0$.

\begin{definition}
\label{def:ellipticCurve}
    An \defemph{elliptic curve} is a pair $(E,O)$ of a nonsingular curve $E$ of genus $1$ and a specified \defemph{base point}  $O\in E$. If $E$ is the variety of a polynomial with coefficients in $K$ then $E$ is said to be \defemph{defined over $K$}.
\end{definition}

Famously, elliptic curves have a commutative group structure on its points known as the \defemph{group law}. We denote this group law by $\oplus$ and $\ominus$, so in our notation the group axioms become:
\begin{itemize}
    \item For every $P,Q\in E$ there exists a unique $P\oplus Q\in E$,
    \item There exists a unique $O\in E$, namely the base point from Definition~\ref{def:ellipticCurve}, that satisfies $P\oplus O = O\oplus P=P$ for all $P\in E$,
    \item For every $P\in E$ there exists a unique $\ominus P\in E$ such that $P\oplus (\ominus P) = \ominus P \oplus P = O$.\footnote{We denote $P\oplus (\ominus P)$ by $P\ominus P$.}
    \item For every $P,Q,R\in E$,
    \[ P\oplus (Q\oplus R) = (P\oplus Q) \oplus R.\]
\end{itemize}
Our shift automorphism will be defined in terms of this group law.

\subsection{Elliptic Functions}

The description of $E$ being a curve of genus $1$ seems loose, but it can be shown that every elliptic curve $E$ defined over $K$ can be viewed as arising from a nonhomogeneous \defemph{Weierstrass equation}
\begin{equation}
Y^2 = 4X^3-g_2X-g_3,\label{eq:weierEqAlg}
\end{equation}
a polynomial equation in variables $X$ and $Y$ with coefficients in $K$ that satisfies the nonsingularity condition $g_2^3\neq 27g_3^2$. In precise terms, there exists a map from $E$ to two-dimensional projective space $\mathbb{P}^2$ over $K$ defined as $\phi=[x:y:1]$ on $E\, \backslash\, \{O\}$ and $\phi(O)=[0:1:0]$, and the map $\phi$ defines an isomorphism between $E$ and the curve defined by the homogenization of Equation~\eqref{eq:weierEqAlg}. The functions $x,y:E\to \mathbb{P}^1$ are meromorphic functions\footnote{A meromorphic function $f:E\to \mathbb{P}^1$ is defined to send $P$ to $[f(P):1]$ if $f$ is regular at $P$, and to $[1:0]$ if $f$ has a pole at $P$, cf. \cite[Example~II.2.2]{Silverman2009}.} on $E$ and are called the \defemph{Weierstrass coordinates} of the elliptic curve. These functions in fact are sufficient to construct the field $\fieldAlg$ of meromorphic functions on $E$.\footnote{We denote the field by $\fieldAlg$ to suggest that we are working with projective curves of genus $1$, a feature that is more prominent in the algebraic setting than the analytic one.}

\begin{proposition}
    The equality of fields $\fieldAlg = K(x,y)$ holds.
\end{proposition}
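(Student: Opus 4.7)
The plan is to exploit the isomorphism $\phi \colon E \to V$ that was already introduced just above the statement, where $V \subset \mathbb{P}^2$ denotes the projective curve cut out by the homogenization of the Weierstrass equation $Y^2 = 4X^3 - g_2 X - g_3$. Because $\phi$ is declared to be an isomorphism of varieties, it induces an isomorphism $\phi^* \colon K(V) \xrightarrow{\sim} \fieldAlg$ of function fields, sending the affine coordinate functions $X$ and $Y$ on $V$ to the Weierstrass coordinates $x$ and $y$ on $E$ respectively. Thus it suffices to observe that $K(V)$ is generated over $K$ by the two coordinate functions. For this last step one writes $K(V)$ as the field of fractions of the integral domain $K[X,Y]/(Y^2 - 4X^3 + g_2 X + g_3)$, which is manifestly generated over $K$ by the images of $X$ and $Y$. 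Transporting this presentation across $\phi^*$ yields $\fieldAlg = K(x,y)$.

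As a cross-check using only the properties of $x$ and $y$ as meromorphic functions on $E$, I would offer a degree-theoretic argument. From $\phi(O) = [0 : 1 : 0]$ and $\phi = [x:y:1]$ off $O$, one reads off that $x$ has a single pole, of order $2$, at $O$, while $y$ has a single pole, of order $3$, at $O$. Consequently $x \colon E \to \mathbb{P}^1$ is a degree-$2$ map, so $[\fieldAlg : K(x)] = 2$. On the other hand, every element of $K(x)$ has pole order at $O$ divisible by $2$, because it is a rational function in $x$ which has pole of even order there; since $y$ has pole of odd order $3$ at $O$, we conclude $y \notin K(x)$. Hence $K(x) \subsetneq K(x,y) \subseteq \fieldAlg$, and by the tower law equality must hold at the right.

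The only genuinely nontrivial ingredient in either approach is the assertion, imported from the surrounding narrative, that $\phi$ is an isomorphism of projective varieties (equivalently, that $x$ really does have degree $2$ and the Weierstrass cubic is really nonsingular under the hypothesis $g_2^3 \neq 27 g_3^2$). This is where the assumption that $E$ has genus $1$ and a distinguished base point is being consumed, and it is the one place I would cite the preceding background rather than reprove anything. Everything downstream is formal: either a transport of structure along $\phi^*$, or a two-line tower-of-fields computation.
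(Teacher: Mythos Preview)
Your argument is correct; both the transport-of-structure approach via $\phi^*$ and the degree-count via $[\fieldAlg:K(x)]=2$ are standard and valid. The paper does not supply its own proof of this proposition: it is stated as background, with the blanket remark earlier in the section that uncited results can be found in \cite[Chapter~III]{Silverman2009} (in particular, this is essentially \cite[Proposition~III.3.1(c)]{Silverman2009}). So there is nothing to compare against beyond noting that your second argument is exactly the one Silverman gives.
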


\noindent As before, we call the elements of $\fieldAlg$ \defemph{elliptic functions} on $E$.

In the case where $K=\mathbb{C}$, the parallels between Equation~\eqref{eq:weierEqAlg} and Proposition~\ref{prop:wpCubic} suggest a correspondence between the analytic and algebraic formulations of elliptic curves over $\mathbb{C}$. This is indeed the case, formulated as the Uniformization Theorem.

\begin{theorem}[Uniformization Theorem]
\label{thm:uniformization}
Let $\mathbb{C}(x,y)$ be an elliptic function field with associated Weierstrass equation $Y^2 = 4X^3  - g_2X-g_3$. Then there exists a lattice $\Lambda\subset \mathbb{C}$, unique up to homothety, and a field isomorphism
\[\varphi:\fieldC\to \mathbb{C}(x,y),\qquad \wp\mapsto x, \ \ \wp'\mapsto y\]
where $\wp$ is the Weierstrass elliptic function with respect to $\Lambda$. \end{theorem}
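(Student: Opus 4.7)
The plan is to prove the theorem in three phases: (i) find a lattice $\Lambda \subset \mathbb{C}$ with prescribed Eisenstein invariants $g_2(\Lambda) = g_2$ and $g_3(\Lambda) = g_3$; (ii) build a complex-analytic isomorphism $\Phi\colon \elltorus \to E(\mathbb{C})$ via the Weierstrass parametrization; and (iii) pull $\Phi$ back to the claimed isomorphism of function fields.

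For phase (i), I would invoke the $j$-invariant. Set $j := 1728\, g_2^3/(g_2^3 - 27 g_3^2)$, which is finite by the nonsingularity hypothesis $g_2^3\neq 27g_3^2$. The classical analytic $j$-function on the upper half-plane $\mathcal{H}$ is surjective onto $\mathbb{C}$: this is a standard result from modular forms, proved via the fundamental domain for $\mathrm{SL}_2(\mathbb{Z})$ acting on $\mathcal{H}$ together with the observation that the $q$-expansion of $j$ has a simple pole at the cusp. Pick $\tau \in \mathcal{H}$ with $j(\tau) = j$ and consider $\Lambda_\tau = \mathbb{Z} + \mathbb{Z}\tau$. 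Then $\Lambda_\tau$ already has the correct $j$-invariant, so after rescaling $\Lambda := c\Lambda_\tau$ I can use the homogeneity $g_k(c\Lambda_\tau) = c^{-2k} g_k(\Lambda_\tau)$ for $k = 2, 3$ to solve for a single scalar $c \in \mathbb{C}^\times$ matching both invariants at once; matching $j$-values is precisely the compatibility needed for one scalar to work simultaneously. Uniqueness of $\Lambda$ up to homothety then follows from the injectivity of $j$ on the modular curve $\mathrm{SL}_2(\mathbb{Z})\backslash\mathcal{H}$.

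For phase (ii), define $\Phi\colon \elltorus \to \mathbb{P}^2(\mathbb{C})$ by $z \mapsto [\wp(z) : \wp'(z) : 1]$ for $z \notin \Lambda$ and $0 \mapsto [0:1:0]$. By Proposition~\ref{prop:wpCubic} applied with the matched coefficients $g_2, g_3$, the image lies inside $E$. The map $\Phi$ is holomorphic on $\elltorus \setminus \{0\}$ and extends holomorphically at $0$ after switching to local coordinates near $[0:1:0]$. Bijectivity is essentially classical: for any $w \in \mathbb{C}$, Theorem~\ref{thm:balancingGame}(b) gives that $\wp(z) - w$ has exactly two zeros (counted with multiplicity) in the fundamental parallelogram, necessarily of the form $\pm z_0$ by evenness of $\wp$, and the odd function $\wp'$ distinguishes these, so $\Phi$ is injective; surjectivity then follows because every affine point $(x_0, y_0) \in E$ has $y_0^2 = 4x_0^3 - g_2 x_0 - g_3 = \wp'(z_0)^2$ for the $z_0$ with $\wp(z_0) = x_0$, and we select the sign of $z_0$ to match $y_0$.

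For phase (iii), the pullback $\Phi^*\colon \mathbb{C}(x,y) \to \fieldC$ is a field homomorphism sending $x \mapsto \wp$ and $y \mapsto \wp'$. Surjectivity follows from the structural decomposition $\fieldC = \mathbb{C}(\wp)[\wp']$ stated in Section~\ref{s:analysisReview}, and injectivity is automatic for any nonzero field homomorphism. Setting $\varphi := (\Phi^*)^{-1}$ yields the desired isomorphism. The principal obstacle in the whole argument is the surjectivity of $j\colon \mathcal{H} \to \mathbb{C}$, which is a genuinely substantive classical input; everything else is careful matching between the analytic identities (series for $\wp$, $\wp'$) and the algebraic structure of the Weierstrass cubic.
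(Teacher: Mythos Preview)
Your proposal is correct and follows the standard route: the paper itself does not give a proof but simply cites \cite[Corollary~VI.5.1.1]{Silverman2009}, and your three-phase argument (surjectivity of the $j$-function to produce $\Lambda$, Weierstrass parametrization $\Phi$, pullback to function fields) is precisely the content of that reference. One small sharpening: once $g_2(\Lambda)=g_2$ and $g_3(\Lambda)=g_3$ are both matched, the lattice is in fact unique on the nose (not merely up to homothety), since $c^{-4}=c^{-6}=1$ forces $c^2=1$ and $-\Lambda=\Lambda$; the paper's phrasing ``unique up to homothety'' is already weaker than what your argument delivers.
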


\begin{proof} See \cite[Corollary~VI.5.1.1]{Silverman2009}. \end{proof}

\noindent Conversely, every lattice $\Lambda$ gives rise to an elliptic function field.

\begin{theorem} Let $\Lambda\subset\mathbb{C}$ be a lattice with corresponding Eisenstein series $g_2(\Lambda)$ and $g_3(\Lambda)$. Let $\fieldAlg=\mathbb{C}(x,y)$ be the elliptic function field arising from the Weierstrass equation $Y^2 = 4X^3  - g_2(\Lambda)X-g_3(\Lambda)$, where $x$ and $y$ are the corresponding Weierstrass coordinate functions. Then the map
\[\varphi:\fieldC\to \mathbb{C}(x,y),\qquad \wp\mapsto x, \ \ \wp'\mapsto y\]
is an isomorphism of fields.
\end{theorem}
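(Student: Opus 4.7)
The plan is to define $\varphi$ directly using the unique representation $\fieldC=\mathbb{C}(\wp)\oplus \wp'\cdot\mathbb{C}(\wp)$ stated in Section~\ref{s:analysisReview} together with the cubic relation of Proposition~\ref{prop:wpCubic}. Given $f\in\fieldC$, write $f=A(\wp)+\wp'\,B(\wp)$ uniquely with $A,B\in\mathbb{C}(t)$, and set
\[\varphi(f):=A(x)+y\cdot B(x)\in \mathbb{C}(x,y).\]
For the right-hand side to be unambiguous, I need a matching decomposition $\mathbb{C}(x,y)=\mathbb{C}(x)\oplus y\cdot\mathbb{C}(x)$ on the target side; this holds because the Weierstrass polynomial $Y^2-(4X^3-g_2X-g_3)$ is irreducible over $\mathbb{C}(X)$, since the cubic $4X^3-g_2X-g_3$ is not a square in $\mathbb{C}(X)$ on degree grounds, so $\mathbb{C}(x,y)$ is a genuine degree-$2$ extension of $\mathbb{C}(x)$.

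Next I would verify that $\varphi$ is a ring homomorphism. Additivity is immediate from additivity of the canonical decomposition. For multiplicativity, expanding $\bigl(A_1(\wp)+\wp'B_1(\wp)\bigr)\bigl(A_2(\wp)+\wp'B_2(\wp)\bigr)$ and substituting $(\wp')^2=4\wp^3-g_2\wp-g_3$ puts $f_1 f_2$ into the canonical form
\[\bigl[A_1A_2+(4\wp^3-g_2\wp-g_3)B_1B_2\bigr](\wp)+\wp'\bigl[A_1B_2+A_2B_1\bigr](\wp),\]
so $\varphi(f_1f_2)$ becomes the same expression with $\wp,\wp'$ replaced by $x,y$. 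Symmetrically, expanding $\varphi(f_1)\varphi(f_2)=(A_1(x)+yB_1(x))(A_2(x)+yB_2(x))$ and using $y^2=4x^3-g_2x-g_3$ yields precisely this expression, so $\varphi(f_1f_2)=\varphi(f_1)\varphi(f_2)$. Injectivity is automatic for a nonzero field homomorphism, and surjectivity follows because the image of $\varphi$ is a subfield containing $\varphi(\wp)=x$ and $\varphi(\wp')=y$, hence equals all of $\mathbb{C}(x,y)$.

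The main substantive input is the representation theorem in Section~\ref{s:analysisReview} providing the unique decomposition on the source side; given this, the proof is essentially a term-by-term matching of two algebras built from the same cubic polynomial. An alternative route would be to cite the Uniformization Theorem~\ref{thm:uniformization} to produce a lattice $\Lambda'$ (unique up to homothety) realizing the Weierstrass equation $Y^2=4X^3-g_2(\Lambda)X-g_3(\Lambda)$, and then use the functional dependence of the Eisenstein series on the lattice to deduce $\Lambda'=\Lambda$; I would prefer the direct construction above, since it avoids reconciling the homothety ambiguity and makes the isomorphism explicit in terms of the canonical decomposition.
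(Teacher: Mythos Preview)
Your argument is correct. The paper, however, does not give a proof at all: it simply cites \cite[Proposition~VI.3.6(b)]{Silverman2009} and moves on. So there is no ``same approach'' to match; you have supplied the content that the paper outsources.

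Your route is the natural elementary one: invoke the unique decomposition $\fieldC=\mathbb{C}(\wp)\oplus\wp'\,\mathbb{C}(\wp)$ already recorded in Section~\ref{s:analysisReview}, observe that $\mathbb{C}(x,y)=\mathbb{C}(x)\oplus y\,\mathbb{C}(x)$ because the odd-degree cubic $4X^3-g_2X-g_3$ cannot be a square in $\mathbb{C}(X)$, and then check that the two multiplication tables agree via the common relation $(\wp')^2=4\wp^3-g_2\wp-g_3$ and $y^2=4x^3-g_2x-g_3$. This is exactly the sort of verification Silverman carries out, so in substance you are reproducing the cited proof rather than inventing a new one. What you gain over the bare citation is self-containedness and an explicit formula for $\varphi$ on arbitrary elements; what the citation buys is brevity and an authoritative home for the small lemmas (irreducibility, uniqueness of the decomposition) that you are taking for granted. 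Your closing remark about the alternative via Theorem~\ref{thm:uniformization} is well judged: that route would be circular in spirit here, since the present theorem is the converse input to uniformization.
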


\begin{proof} See \cite[Proposition~VI.3.6(b)]{Silverman2009}. \end{proof}

This gives us two views of elliptic curves defined over $\mathbb{C}$. Consider a lattice $\Lambda\subset\mathbb{C}$ and its corresponding Weierstrass elliptic function $\wp(z)$. From Proposition \ref{prop:wpCubic}, $\wp$ and $\wp'$ satisfy the polynomial relationship
\[\wp'(z)^2 = 4\wp(z)^3 - g_2(\Lambda)\wp(z)-g_3(\Lambda).\]
The equation $Y^2 = 4X^3  - g_2(\Lambda)X-g_3(\Lambda)$ defines an elliptic function field $\mathbb{C}(x,y)\cong \mathbb{C}(\wp,\wp')$. The Uniformization Theorem constructs such an isomorphism that maps $x\mapsto\wp$ and $y\mapsto \wp'$, so $\fieldAlg = \mathbb{C}(x,y)\cong \fieldC$.

The Uniformization Theorem lets us glean lots of intuition from Chapter~\ref{c:analytic}. Previously we utilized an analytic construct $\zeta_{\Lambda}(z)$ on the universal covering space $\mathbb{C}$ of the torus $\mathbb{C}/\Lambda$ to compute reductions and find criteria for summability. The Uniformization Theorem says that one should in principle be able to see this entirely from the algebraic perspective, and this is our goal for this chapter.

Of course, the Uniformiztion Theorem only applies for elliptic curves defined over $\mathbb{C}$. The field $\mathbb{C}$ is a highly analytic construct, and the case of general fields $K$ has no associated analytic objects a priori. This complicates the use of explicit constructions like the Weierstrass $\zeta$-function. In the following subsection we use strategies to invoke the existence of functions to use in the algebraic analogue of the reduction from Subsection~\ref{ss:reductionSearch}.

\subsection{Divisors and the Riemann--Roch Theorem}

In order to talk about a field $\fieldAlg$ of meromorphic functions on an elliptic curve, we must import language from algebraic geometry in order to express the data of elements of $\fieldAlg$. The strategy of Subsection~\ref{ss:analyticReduction} was to use elliptic functions with very particular poles to reduce $f\in\fieldC$ to have poles with minimal dispersion with respect to the shift automorphism. In the algebraic setting we use divisors of elliptic functions to express the data of their poles. But to show that elliptic functions with particular divisors even exist, we must employ the Riemann--Roch theorem. We review these technologies here. Definitions and results can mostly be found in \cite[Chapter~II]{Silverman2009}. Results outside this reference are cited explicitly.

The data of a set of poles of a function is simply an unordered list with multiplicities, which can be modeled with formal sums.

\begin{definition}
Given a projective curve $C$, its \defemph{divisor group} $\Div(C)$ is the free abelian group generated by the points on $C$. It consists of formal sums
\[\sum\limits_{P\in C} n_P[P]\]
of finite support, where $n_P\in\mathbb{Z}$ for all $P\in C$. The \defemph{degree} of a \defemph{divisor} $D\in\Div(C)$ with the above expression is defined to be
\[\deg(D) = \sum\limits_{P\in C} n_P.\]
The subgroup of divisors of degree $0$ is denoted by $\Div^0(C)$.
\end{definition}

\noindent Divisors are a useful general construct for storing the data of poles \emph{and} zeros of a function on $C$.

\begin{definition}
Suppose a curve $C$ is defined over the field $K$, and $f$ is a nonzero meromorphic function on $C$. Then we define the divisor associated to $f$ by
\[\div(f) = \sum\limits_{P\in C} n_P[P],\]
where $n_P=m$ if $P$ is a zero of $f$ of multiplicity $m$, $n_P=-m$ if $P$ is a pole of $f$ of multiplicity $m$, and $n_P=0$ otherwise.
\end{definition}

What is more, if $C$ is smooth then the zeros and poles of a meromorphic function on $f$ need to play a balancing game similar to Theorem~\ref{thm:balancingGame}, even if it is less powerful in the case of genus different from $1$.

\begin{proposition}
    \label{prop:algDivisorsOfFunctionsVanish}
    Let $C$ be a smooth curve, and let $f$ be a nonzero meromorphic function on $C$. Then $\deg(\div(f))=0$. The divisor $\div(f)$ is identically $0$ if and only if $f$ is constant.
\end{proposition}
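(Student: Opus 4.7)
The plan is to reduce both statements to the fact that a nonconstant meromorphic function on a smooth projective curve $C$ defines a finite morphism $f: C \to \mathbb{P}^1$, and to exploit the behavior of this morphism at $0$ and $\infty$. The key input will be that for a finite morphism between smooth projective curves, the pullback of any point as a divisor has degree equal to the degree of the morphism; this is the algebraic incarnation of ``a proper map has constant fiber degree.''

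First, I would dispose of the trivial case: if $f$ is constant, then it has no zeros or poles, so $\div(f) = 0$ and hence $\deg(\div(f)) = 0$. Next, assuming $f$ is nonconstant, I would regard $f$ as a surjective morphism $f: C \to \mathbb{P}^1$ and let $d := \deg(f) = [K(C) : f^*K(\mathbb{P}^1)]$. The points $[0:1], [1:0] \in \mathbb{P}^1$ are each a single divisor of degree $1$, and their pullbacks $f^*[0:1]$ and $f^*[1:0]$ are, by definition, the divisors
\[
f^*[0:1] = \sum_{P \in f^{-1}(0)} e_P(f)\,[P], \qquad f^*[1:0] = \sum_{P \in f^{-1}(\infty)} e_P(f)\,[P],
\]
where $e_P(f)$ denotes the ramification index of $f$ at $P$. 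Here the zeros of $f$ are precisely the points of $f^{-1}(0)$ with multiplicity $e_P(f)$, and likewise its poles are the points of $f^{-1}(\infty)$ with multiplicity $e_P(f)$, so
\[
\div(f) = f^*[0:1] - f^*[1:0].
\]
Since both pullbacks have degree $d$, the difference has degree $0$, which gives the first claim.

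For the second claim, if $\div(f) = 0$ then $f$ has no poles anywhere on $C$, so $f$ is a regular morphism $C \to \mathbb{A}^1 \subset \mathbb{P}^1$. Because $C$ is a smooth projective curve and hence complete, the image $f(C)$ must be a closed subset of $\mathbb{P}^1$ contained in the affine line $\mathbb{A}^1$; the only such subsets are finite collections of points, and since $C$ is irreducible the image is a single point, forcing $f$ to be constant. The converse is immediate from the definition of $\div(f)$.

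The main obstacle is invoking the correct machinery at the right level of generality: the proof rests crucially on the equidegree property of pullback divisors under a finite morphism of smooth projective curves, and on the completeness of $C$ used to rule out nonconstant regular maps to $\mathbb{A}^1$. Both facts are standard (see, e.g., the references to \cite{Silverman2009} already in the text), so in practice I would simply cite the relevant propositions from Chapter~II of \emph{loc. cit.} rather than reprove them here.
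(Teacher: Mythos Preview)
Your argument is correct and is essentially the standard proof found in \cite[Chapter~II]{Silverman2009}. Note, however, that the paper does not actually prove this proposition: it is stated as background material with an implicit citation to Silverman, so there is no ``paper's own proof'' to compare against beyond the reference you already anticipated citing.
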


In Chapter~\ref{c:analytic} we were able to invoke explicit constructions of elliptic functions with particular poles, but in the algebraic setting we no longer have such recipes for functions in $\fieldAlg$. To circumvent this we observe that if we were to limit the set of ``permissible'' poles of a function to a finite set, then this limits the choice of functions to a $K$-vector space. Let us provide notation for this concept.

First, we say that a divisor $D$ is \defemph{effective} if it has an expression
\[D=\sum\limits_{P\in C} n_P[P]\]
where the coefficients $n_P$ are in fact all nonnegative. This creates a partial ordering where $D_1\geq D_2$ if and only if $D_1-D_2$ is effective.

\begin{definition}
    Let $C$ be a projective curve, and let $K(C)$ be its field of meromorphic functions. Let $D\in\Div(C)$. We associate with $D$ the \defemph{Riemann--Roch space}
    \[\mathcal{L}(D) = \{f\in K(C)\mid \div(f)\geq -D\}.\]
    It can be shown that this set is a vector space over $K$: denote its dimension by $\ell(D)$.\footnote{This definition is formulated in \cite[Section~II.6]{hartshorneAlgebraicGeometry1977} in terms of invertible sheaves. The sheaf-theoretic view is unnecessary for our purposes.}
\end{definition} \medskip

\begin{example}
    For points $P,Q\in C$, the vector space $\mathcal{L}([P]+[Q])$ is that of functions only allowed to have poles of order at most $1$ and $P$ and $Q$. To contrast, $\mathcal{L}([P]-[Q])$ is the vector space of functions only allowed to have a pole of order $1$ at $P$, but \emph{required} to have a zero of order at least $1$ at $Q$.
\end{example}

The dimension $\ell(D)$ of a Riemann--Roch space $\mathcal{L}(D)$, at least on a \emph{smooth} curve $C$, is precisely the concern of the Riemann--Roch Theorem. The utility of such a result is that if we know the dimension of such a vector space, we can infer the existence of functions on a smooth curve with a prescribed set of poles. The exact statement of the Riemann--Roch Theorem requires more technology than we have already developed: for example see \cite[Theorem~II.5.4]{Silverman2009} or \cite[Theorem~6.6.37]{Kirwan1992}. In the interest of time and space we formulate a version here that that removes the differential terminology while being suitable for our purposes.

\begin{theorem}[Riemann--Roch Theorem, genus $1$]
\label{thm:riemannroch} Let $E$ be a nonsingular curve of genus $1$. Then $\ell(D) = \deg(D)$ for all nonzero effective divisors $D\in\Div(E)$.
\end{theorem}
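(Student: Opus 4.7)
The plan is to deduce this statement from the full Riemann--Roch Theorem as it appears in \cite[Theorem~II.5.4]{Silverman2009}, which states that for a nonsingular curve $C$ of genus $g$ with canonical divisor $K_C$ and any divisor $D\in\Div(C)$,
\[\ell(D)-\ell(K_C-D) = \deg(D)-g+1.\]
Since $E$ has genus $g=1$, one has $\deg(K_E) = 2g-2 = 0$, and the formula collapses to $\ell(D)-\ell(K_E-D) = \deg(D)$. Hence to establish the theorem it would suffice to show that $\ell(K_E-D)=0$ whenever $D$ is a nonzero effective divisor on $E$.

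The key observation for this vanishing is the following: for any divisor $D'\in\Div(E)$ with $\deg(D')<0$, the Riemann--Roch space $\mathcal{L}(D')$ is trivial. Indeed, if $f\in\mathcal{L}(D')$ were nonzero, then $\div(f)+D'\geq 0$ would be an effective divisor, so $\deg(\div(f))+\deg(D')\geq 0$. By Proposition~\ref{prop:algDivisorsOfFunctionsVanish} one has $\deg(\div(f))=0$, which would force $\deg(D')\geq 0$, a contradiction. Applied to $D' = K_E - D$, whose degree is $-\deg(D)\leq -1$ under the hypothesis that $D$ is nonzero and effective, this yields $\ell(K_E-D)=0$ as required.

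Combining these two ingredients, one would conclude
\[\ell(D) = \deg(D) + \ell(K_E-D) = \deg(D) + 0 = \deg(D),\]
completing the proof. I would expect the bulk of the writing to simply be the bookkeeping above; there is no genuine obstacle once the full Riemann--Roch Theorem is taken as a black box, since the argument reduces to the two routine facts that $\deg(K_E)=0$ in genus one and that Riemann--Roch spaces of negative-degree divisors are trivial. The only subtlety worth remarking on in the exposition is why the hypothesis ``$D$ nonzero effective'' is needed: without it, $D=0$ would give $\ell(D)=1\neq 0=\deg(D)$, so the statement genuinely fails for the zero divisor and the hypothesis is essential precisely to guarantee $\deg(K_E-D)<0$.
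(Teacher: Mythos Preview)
Your argument is correct and is exactly the standard derivation one finds in \cite[Example~II.5.7]{Silverman2009}, which is all the paper does: it simply cites that example. So your proposal and the paper's ``proof'' are the same in spirit; you have merely unpacked what the reference contains.
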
 \medskip

\begin{proof}
    See \cite[Example~II.5.7]{Silverman2009}.
\end{proof}

The Riemann--Roch Theorem is an incredibly powerful tool for invoking the existence of functions with particular divisors without needing to have a construction on hand. Some constructions are immediate.

\begin{example}
\label{ex:RRapplications}
    There are several basic examples of applications of Theorem~\ref{thm:riemannroch} that are instructive and inform our later approaches to determing summability in the algebraic setting.
    \begin{itemize}
        \item Suppose $D$ is an effective divisor with degree $1$: then $D=[P]$ for some $P\in E$. From Theorem~\ref{thm:riemannroch}, $\ell(D)=1$. Because the constant functions are all elements of $\mathcal{L}(D)$ it follows that they comprise $\mathcal{L}(D)$. An immediate corollary is that all nonconstant elliptic functions have at least two poles counting multiplicity, mirroring Corollary~\ref{cor:2poles}.
        \item If $D=[P]+[Q]$ for some $P,Q\in E$ not necessarily distinct, then $\ell(D)=2$. Again $K\subset\mathcal{L}(D)$ realized as constant functions on $E$, so there must exist some $f\in \mathcal{L}(D)$ with $\div(f) = D$.
        \item If $D=n[P]$ with $n\geq 2$ then one can inductively show that $\mathcal{L}(D)$ has a basis $\{1,f_2,f_3,\dots,f_n\}$ where $1\in K$ and $\div(f_k)=k[P]$ for $2\leq k\leq n$.
        \item If $D=n[P]+[Q]$ with $n\geq 1$ then one can inductively show that $\mathcal{L}(D)$ has a basis $\{1,f_1,f_2,\dots,f_n\}$ where $1\in K$ and $\div(f_k)=k[P]+[Q]$ for $1\leq k\leq n$.
    \end{itemize}
\end{example} \medskip\medskip

With our newfound technology of divisors we can see how the reduction steps of Subsection~\ref{ss:analyticReduction} can be expressed using this language.

\begin{example}
    In the reduction of Subsection~\ref{ss:analyticReduction}, in terms of divisors on $\mathbb{C}/\Lambda$,
    \begin{itemize}
        \item Step $1$ used functions of the form
        \[f_1(z) = \frac{d^{j-1}}{dz^{j-1}} \zeta(z-\hat{q}-n\shiftnumber),\]
        each of which has divisor $j\Big[\overline{\hat{q}+n\shiftnumber}\Big]$ where the overline denotes taking modulo $\Lambda$.
        \item Step $2$ used functions of the form
        \[f_2(z) = \zeta(z-\hat{q}-n\shiftnumber) - \zeta(z-n\shiftnumber),\]
        each of which has divisor $\Big[\overline{\hat{q}+n\shiftnumber}\Big] + \big[\overline{n\shiftnumber}\big]$. This step is notable because there is exactly one function with this divisor up to a constant multiple and additional constant term, as can be seen from Example~\ref{ex:RRapplications}. Indeed,
        \[\mathcal{L}\Big(\Big[\overline{\hat{q}+n\shiftnumber}\Big] + \big[\overline{n\shiftnumber}\big]\Big) = \Span_{\mathbb{C}}\left\{ 1,\zeta(z-\hat{q}-n\shiftnumber) - \zeta(z-n\shiftnumber)\right\}.\]
        Similar structures can be seen in step $3$ of the reduction.        
    \end{itemize}
\end{example}

\section{Previous Results on Summability}

We now have a sufficient collection of algebraic geometric technology to handle the question of summability. We are also now able to precisely state the previous results of \cite{Dreyfus2018} on the matter. As we will see, these results will mirror our previous work in the analytic setting of Chapter~\ref{c:analytic}.

An additional difficulty in the algebraic setting is that we no longer have access to a partial fraction decomposition, or even a global expression that simulates such a decomposition. Recall that such a decomposition is an amalgamation of all local expansions of an elliptic function. One approach to circumvent finding a global expression is to simply keep track of all local expansions separately. This is precisely the implementation that the authors of \cite{Dreyfus2018} use to define orbital residues in the algebraic setting.

After defining the difference field structure on $\fieldAlg$, we summarize the previous results on summability. We also tie these results back to results in Chapter~\ref{c:analytic}.

\subsection{The Shift Automorphism}

Given an elliptic curve $E$, let $\fieldAlg$ be the field of elliptic functions on $E$. We will define shift automorphism $\tau$ on $\fieldAlg$ in terms of the group law on $E$. Let $\shiftalg$ be a non-torsion point of $E$, that is, let $\shiftalg\in E$ be such that $n\shiftalg\neq O$ for all $n\neq 0$. Given $f(Q)\in\fieldAlg$ with variable $Q$, we define the shift automorphism $\tau$ on $\fieldAlg$ by
\[\tau:f(Q)\mapsto f(Q\oplus \shiftalg).\]
We denote the corresponding shift on the points of $E$ by
\begin{align*}
    \shifttorus&:E\to E\\
    \shifttorus&:Q\mapsto Q\ominus \shiftalg,
\end{align*}
where $\shifttorus$ is a morphism of sets as opposed to groups. Thus $\shift^{-1}(f(Q))=f(\shifttorus(Q))$ for all $f\in\fieldAlg$. So just as before, a point $Q\in E$ is a pole of order $k$ of $f\in\fieldAlg$ if and only if $\shifttorus(Q)$ is a pole of order $k$ of $\shift(f)\in\fieldAlg$. The definition of summability is as before: an element $f\in \fieldAlg$ is called \defemph{summable} if there exists $g\in \fieldAlg$ such that $f = \shift(g)-g$. The polar dispersion $\pdisp(f)$ and the weak polar dispersion $\wpdisp(f)$ have analogous definitions and satisfy analogous properties.

With these concepts we can show an algebraic analogue of Lemma~\ref{lem:analyticLPSummable}:

\begin{lemma}
    \label{lem:algebraicLPSummable}
    Let $Q$ be a point on the elliptic curve $E$, and let
    \[f\in \mathcal{L}([Q]+[\shifttorus(Q)])\]
    be nonzero. Then $f$ is not summable.
\end{lemma}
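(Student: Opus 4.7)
The plan is a proof by contradiction: assume $f \in \mathcal{L}([Q] + [\shifttorus(Q)])$ is nonzero and summable, say $f = \tau(g) - g$ with $g \in \fieldAlg$. First I would use Riemann--Roch (Theorem~\ref{thm:riemannroch}) to pin down the pole structure of $f$: since $\ell([Q]+[\shifttorus(Q)]) = 2$ and the constants already span a one-dimensional subspace, $f$ is either a nonzero constant or nonconstant, and in the latter case the constraint $\div(f)\geq -[Q]-[\shifttorus(Q)]$ together with the fact that every nonconstant elliptic function has at least two poles counted with multiplicity (Example~\ref{ex:RRapplications}) forces $f$ to have exactly simple poles at $Q$ and $\shifttorus(Q) = Q\ominus\shiftalg$.

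If $f$ is a nonzero constant, I would show $g$ must also be constant (hence $f = 0$, a contradiction). Indeed, if $g$ had any pole, one could choose one such pole $p$ so that $p \oplus \shiftalg$ is not also a pole of $g$ (possible because $g$ has only finitely many poles in any $\shifttorus$-orbit). Then $\tau(g)$ is regular at $p$ while $-g$ has a pole at $p$, so $\tau(g)-g$ has a pole at $p$, contradicting $f$ being constant. Hence $g$ has no poles on the projective curve $E$, so $g$ is constant and $f = 0$.

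In the main case $f$ is nonconstant with simple poles at $Q$ and $Q \ominus \shiftalg$, so $\pdisp(f) = 1$. The algebraic analogue of Lemma~\ref{lem:incrementDispersion} then gives $\pdisp(g) = 0$, and the corresponding $\wpdisp$ statement forces $g$ to have only simple poles (otherwise $f$ would inherit a pole of order $\geq 2$). Because no two poles of $g$ share a $\shifttorus$-orbit, the pole divisors of $\tau(g)$ and $-g$ are disjoint, so no cancellation occurs and the pole set of $\tau(g)-g$ is exactly $\{p,\ p\ominus\shiftalg : p \text{ a pole of } g\}$. For this set to lie in $\{Q, Q\ominus\shiftalg\}$, every pole $p$ of $g$ must satisfy both $p \in \{Q,Q\ominus\shiftalg\}$ and $p\ominus\shiftalg\in\{Q,Q\ominus\shiftalg\}$; since $\shiftalg$ is non-torsion, the three points $Q$, $Q\ominus\shiftalg$, $Q\ominus 2\shiftalg$ are pairwise distinct, leaving only $p = Q$. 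Thus $g \in \mathcal{L}([Q]) = K$ (by Example~\ref{ex:RRapplications}), making $g$ constant and contradicting the nonvanishing of $f$.

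The main obstacle I anticipate is managing possible cancellations between the pole divisors of $\tau(g)$ and $g$ in the sum $\tau(g) - g$; the polar dispersion lemma dispatches this concern cleanly by forbidding two poles of $g$ to lie in a common $\shifttorus$-orbit, after which the one-dimensionality of $\mathcal{L}([Q])$ delivers the contradiction with essentially no further computation.
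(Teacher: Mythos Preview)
Your proposal is correct and follows essentially the same route as the paper, which sketches the argument only briefly: assume $f=\tau(g)-g$, use the polar dispersion lemma together with the fact that $f$ has at most two simple poles on a single $\tau^*$-orbit to force $g$ to have at most one simple pole, and then invoke Riemann--Roch (Example~\ref{ex:RRapplications}) to conclude $g$ is constant. Your treatment is simply more explicit---you split off the nonzero-constant case by hand and trace exactly which pole $g$ can have---but the underlying mechanism is identical.
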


The proof is analogous to the analytic case: analyzing the polar dispersion of a hypothetical $g\in \fieldAlg$ satisfying $\shift(g)-g=f$ shows that $g$ has at most one simple pole, which is impossible from the discussion of Example~\ref{ex:RRapplications}. We will relate this result to orbital residues in the following discussion.

\subsection{Orbital Residues}

Orbital residues of elliptic functions were first developed in \cite[Appendix~B]{Dreyfus2018} in precisely this algebraic setting. The authors circumvented the issue of no global decompositions by simply writing the orbital residues in terms of the local expansions. These expansions are in terms of \defemph{local uniformizers} $u_Q$ at each point $Q\in E$, where $u_Q$ is an elliptic function with a zero at $Q$ of multiplicity $1$ (cf. \cite[p.~137]{hartshorneAlgebraicGeometry1977}). But in order to aggregate the data along the orbitals of the shift action $\shifttorus$, one must ensure some compatability of the local uniformizers along this orbit. Dreyfus et al. accomplish this with \cite[Definition~B.6]{Dreyfus2018}:

\begin{definition}
    Let $\mathcal{U}=\{u_Q\mid Q\in E\}$ be a set of local uniformizers at the points of $E$. We say $\mathcal{U}$ is a \defemph{coherent set of local uniformizers} if for any $Q\in E$,
    \[u_{\shifttorus(Q)} = \shift(u_Q).\]
\end{definition}

This is \emph{not} to say that $u_{Q_1\ominus Q_2}(Q_3) = u_{Q_1}(Q_3\oplus Q_2)$ for all $Q_1,Q_2,Q_3\in E$: the uniformizers are not required to respect general shifts. The uniformizers chosen on different orbits of $\shifttorus$ do not interact with each other, so coherent sets of local uniformizers can be created from one another by altering those uniformizers on a single orbit of $\shifttorus$.

\begin{definition}
    Let $\mathcal{U}$ be a set of local uniformizers at the points of $E$. If $f\in\fieldAlg$ has a pole at $Q$ of order $n$, we may write
    \[f = \frac{c_{Q,n}}{u_Q^n} + \cdots +\frac{c_{Q,2}}{u_Q^2} + \frac{c_{Q,1}}{u_Q} + g\]
    where $g$ is elliptic and holomorphic at $Q$, and $c_{Q,j}\in K$ for all $j\in\mathbb{N}$. We shall refer to $c_{Q,j}$ as the \defemph{algebraic residue} of $f$ of order $j$ at $Q$, and denote it by $\res_{\mathcal{U}}(f,Q,j)$.
\end{definition}

\begin{remark}
    To reiterate previous remarks, the terminology overloads the term ``residue'', which are attributes of differentials. The terminology here is chosen to remain congruous with the previous work in \cite{Chen2012} and \cite{Dreyfus2018}.
\end{remark}

The aggregation of these algebraic residues via the orbits of $\shifttorus$ forms the notion of orbital residues, as in \cite[Definition~B.7]{Dreyfus2018}.

\begin{definition}\label{def:ores}
    Let $f$ be an elliptic function and $\mathcal{U}=\{u_Q\mid Q\in E\}$ be a coherent set of local uniformizers. Denote the bidirectional orbit of $Q$ through $\shifttorus$ by
    \[ \omega_Q := \{ Q\oplus i\shiftalg\mid i\in\mathbb{Z}\} = \{ (\shifttorus)^{i}(Q)\mid i\in\mathbb{Z}\}.\]
    For each $j\in \mathbb{N}_{>0}$ we define the \defemph{algebraic orbital residue} of $f$ of order $j$ at the orbit $\omega$ to be
    \[\ores_{\mathcal{U}}(f,\omega,j) = \sum\limits_{Q\in \omega} \res_{\mathcal{U}}(f,Q,j).\]
\end{definition}\medskip\medskip

In contrast to Chapter~\ref{c:analytic} we retain the subscript $\mathcal{U}$ in the notation to indicate that the orbital residue is highly dependent on the choice of the set $\mathcal{U}$ of local uniformizers.

Notice that $\ores_{\mathcal{U}}(f,\omega_Q,r)$ vanishes if there do not exist poles in the orbit $\omega_Q$ that have order at least $r$. These residues can then be used to formulate, up to a term with two or fewer simple poles, a criterion for elliptic functions, shown in \cite[Proposition~B.8]{Dreyfus2018}.

\begin{proposition}\label{prop:algebraicAlmostSummable}
    Let $f\in \fieldAlg$ and $\mathcal{U}=\{u_Q\mid Q\in E\}$ be a coherent set of local uniformizers. The following are equivalent.
    \begin{enumerate}[label=(\arabic*)]
        \item There exist $Q\in E$, $g\in \fieldAlg$, and $e\in\mathcal{L}([Q]+[\shifttorus(Q)])$ such that $f=\shift(g)-g+e$.
        \item For any $Q\in E$ and $j\in \mathbb{N}_{>0}$ we have $\ores_{\mathcal{U}}(f,\omega_Q,j)=0$.
    \end{enumerate}
\end{proposition}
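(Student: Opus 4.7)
The plan is to prove each implication separately, transcribing the analytic reduction of Subsection~\ref{ss:analyticReduction} into the algebraic setting by replacing explicit manipulations of the Weierstrass $\zeta$-function with abstract existence results from Riemann--Roch (Theorem~\ref{thm:riemannroch}).

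For $(1) \Rightarrow (2)$, the goal is to verify directly that each summand in $f = \shift(g) - g + e$ has vanishing orbital residues. For $\shift(g) - g$: the coherence relation $u_{\shifttorus(P)} = \shift(u_P)$ immediately yields $\res_{\mathcal{U}}(\shift(g), \shifttorus(P), j) = \res_{\mathcal{U}}(g, P, j)$ for all $P$ and $j$, so the orbital sum telescopes and $\ores_{\mathcal{U}}(\shift(g) - g, \omega, j) = 0$ for every orbit $\omega$ and every $j$. For $e \in \mathcal{L}([Q] + [\shifttorus(Q)])$: orders $j \geq 2$ contribute nothing because $e$ has at most simple poles, and orbits distinct from $\omega_Q$ have no poles at all. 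The only nontrivial case is order $1$ on $\omega_Q$, where the plan is to show $\res_{\mathcal{U}}(e, Q, 1) + \res_{\mathcal{U}}(e, \shifttorus(Q), 1) = 0$ by invoking the classical residue theorem applied to the differential $e \cdot \omega_0$ on $E$, where $\omega_0$ is a translation-invariant nonzero holomorphic differential; $\shift$-invariance of $\omega_0$ together with coherence should force the conversion factors between differential-theoretic and algebraic residues at $Q$ and $\shifttorus(Q)$ to agree, so that the residue-theorem cancellation descends to the desired cancellation of algebraic residues.

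For $(2) \Rightarrow (1)$, the strategy is a three-step reduction mimicking Subsection~\ref{ss:analyticReduction}, fixing once and for all a point $Q \in E$ as the analogue of the origin. Throughout, Riemann--Roch supplies functions with prescribed divisors: for any point $P_0$ and any $j \geq 2$, Example~\ref{ex:RRapplications} guarantees an elliptic function with a single pole of order $j$ at $P_0$, and the telescoping identity of Example~\ref{ex:shiftn} then promotes any $\shift^n(\phi) - \phi$ to a summable function. Step~1 cancels higher-order poles orbit-by-orbit by subtracting such summable terms to translate each higher-order pole on a given orbit onto a chosen representative; the hypothesis that orbital residues vanish at orders $j \geq 2$ then forces the leftover coefficients at the representatives to be zero. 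Step~2 cancels simple poles on orbits $\omega \neq \omega_Q$ by subtracting summable functions built from elements of $\mathcal{L}([P_0] + [Q])$, collapsing each such orbit's data onto a single coefficient equal to its order-$1$ orbital residue, which vanishes by hypothesis. Step~3 concentrates the remaining simple poles on $\omega_Q$ down to $Q$ and $\shifttorus(Q)$ by subtracting summable functions analogous to the telescoping sums~\eqref{eq:analyticReductionTool3} and~\eqref{eq:analyticReductionTool4}. What remains has pole divisor dominated by $[Q] + [\shifttorus(Q)]$ and is the sought-after $e$.

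The main obstacle will be establishing the vanishing of the order-$1$ orbital residue for $e \in \mathcal{L}([Q] + [\shifttorus(Q)])$ in the forward direction: the higher-order and off-orbit cases are formal, but at order $1$ on $\omega_Q$ nothing in the definition of the algebraic residue immediately forces the two nonzero local coefficients to cancel. The whole weight of that step rests on a clean compatibility between the coherent uniformizers $\mathcal{U}$ and the global invariant differential $\omega_0$, which must be verified with some care. A secondary, more mechanical, obstacle in the reverse direction is the bookkeeping required to track the extraneous poles introduced on $\omega_Q$ during Step~2, since each cancellation on a different orbit creates residual contributions on $\omega_Q$ that must be accounted for in Step~3 to ensure the final remainder truly lies in $\mathcal{L}([Q] + [\shifttorus(Q)])$.
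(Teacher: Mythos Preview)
Your proposal is correct and aligns with the approach the paper describes. Note, however, that the paper does not give its own proof of this proposition: it cites \cite[Proposition~B.8]{Dreyfus2018} for the result, and separately cites \cite[Lemma~B.15]{Dreyfus2018} for the vanishing of $\ores_{\mathcal{U}}(e,\omega_Q,1)$ when $e\in\mathcal{L}([Q]+[\shifttorus(Q)])$, remarking only that the latter ``requires the fact that the residues of a differential form on a compact Riemann surface must sum to zero.'' Your plan to use a $\shift$-invariant holomorphic differential $\omega_0$ together with coherence of $\mathcal{U}$ to convert the classical residue theorem into the needed cancellation is exactly this argument. For the reverse implication, the paper explicitly says that its analytic reduction in Subsection~\ref{ss:reductionSearch} ``mimics the proof of \cite[Proposition~B.8]{Dreyfus2018},'' and later in Section~\ref{c:algebraic} carries out essentially your three-step reduction (with the auxiliary functions $\varphi_{\omega,j}$ and $\psi_j$ playing the role of your Riemann--Roch witnesses). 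So your proposal matches both the cited source and the paper's own parallel construction.
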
 \medskip

That is to say, for any elliptic function $f$ for which its orbital residues vanish, there exists a reduction by a summable function $\shift(g)-g$ to a function $e$ with at most two simple poles related to each other by $\shifttorus$. We can see this result when we look closely at the reduction in the analytic case in Theorem~\ref{thm:analyticReduction}: when the analytic orbital residues vanish, all terms in the reduction disappear besides those including the analytic panorbital residues, namely a term in $\mathcal{L}([0]+[\shiftnumber])$. In fact, our derivation of the terms Theorem~\ref{thm:analyticReduction} that \emph{don't} involve the panorbital residues mimics the proof of \cite[Proposition~B.8]{Dreyfus2018} while giving an explicit reduction process.

This result shows that the problem of summability reduces to determining where $g\in\mathcal{L}([Q]+[\shifttorus(Q)])$ is summable. From Lemma~\ref{lem:algebraicLPSummable} this occurs precisely when $g$ vanishes. It would be computationally convenient of the orbital residue could detect this case. Unfortunately, the worst-case scenario occurs:

\begin{lemma}
    If $g\in\mathcal{L}([Q]+[\shifttorus(Q)])$ then $\ores_{\mathcal{U}}(g,\omega_Q,r)=0$ for all $r\geq 1$.
\end{lemma}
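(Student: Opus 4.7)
The plan is to split into two cases according to the order $r$. For $r \geq 2$ the result is immediate: since $g \in \mathcal{L}([Q]+[\shifttorus(Q)])$ has only simple poles, we have $\res_{\mathcal{U}}(g,P,r)=0$ at every $P \in E$, so $\ores_{\mathcal{U}}(g,\omega_Q,r)=0$ trivially. All the substance lies in the case $r=1$.

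For $r=1$ I would fix a regular nowhere-vanishing translation-invariant differential $\Omega$ on $E$ (which exists since $E$ has genus $1$; for instance $\Omega = dX/Y$ in a Weierstrass model). Applying the classical residue theorem to $g\cdot\Omega$, the sum of its residues over $E$ vanishes, and this sum is supported on at most the two points $Q$ and $\shifttorus(Q)$ where $g$ may have poles. Writing $\Omega = \phi_P\,du_P$ locally at each $P$, with $\phi_P$ regular and nonzero at $P$, a routine Laurent-expansion computation gives $\res_P(g\cdot\Omega) = \phi_P(P)\cdot \res_{\mathcal{U}}(g,P,1)$ for a simple pole. Summing over the two poles and invoking the residue theorem yields
\[\phi_Q(Q)\cdot \res_{\mathcal{U}}(g,Q,1) \ + \ \phi_{\shifttorus(Q)}(\shifttorus(Q))\cdot \res_{\mathcal{U}}(g,\shifttorus(Q),1) \ = \ 0.\]

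The crucial step is to establish $\phi_Q(Q) = \phi_{\shifttorus(Q)}(\shifttorus(Q))$, from which the desired identity $\res_{\mathcal{U}}(g,Q,1) + \res_{\mathcal{U}}(g,\shifttorus(Q),1)=0$ follows at once, both multipliers being nonzero. This is where the \emph{coherent} nature of $\mathcal{U}$ enters. The identity $u_{\shifttorus(Q)} = \shift(u_Q)$ means that the translation $T : P \mapsto P \oplus \shiftalg$ satisfies $u_{\shifttorus(Q)}\circ T = u_Q$ near $Q$, so pulling back $\Omega = \phi_{\shifttorus(Q)}\,du_{\shifttorus(Q)}$ by $T$ and using translation-invariance $T^*\Omega = \Omega$ yields $\phi_{\shifttorus(Q)}\circ T = \phi_Q$ near $Q$. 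Evaluating at $Q$ gives exactly the required equality.

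The main obstacle is the bookkeeping of how the coherent uniformizers interact with the pullback of $\Omega$ under the translation $T$; once this local identification of $\phi_Q(Q)$ with $\phi_{\shifttorus(Q)}(\shifttorus(Q))$ is in place, the rest is an immediate consequence of the residue theorem. This argument is essentially the algebraic counterpart of the use of Theorem~\ref{thm:balancingGame}(a) in Chapter~\ref{c:analytic}, which asserted that the classical residues of an elliptic function sum to zero in a fundamental parallelogram.
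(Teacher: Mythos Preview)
Your approach is correct and matches exactly what the paper describes: the paper does not give a self-contained proof but states that for $r>1$ the result is clear (since $g$ has only simple poles), and for $r=1$ cites \cite[Lemma~B.15]{Dreyfus2018}, explicitly noting that the argument there rests on the fact that residues of a differential form on a compact Riemann surface sum to zero. You have supplied precisely that argument in full, using a translation-invariant holomorphic differential together with the coherence of $\mathcal{U}$.

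One small bookkeeping correction: with $T(P)=P\oplus\shiftalg$, the coherence relation $u_{\shifttorus(Q)}=\shift(u_Q)$ reads $u_{\shifttorus(Q)}=u_Q\circ T$, not $u_{\shifttorus(Q)}\circ T=u_Q$ as you wrote. The cleanest fix is to pull back $\Omega=\phi_Q\,du_Q$ by $T$ instead: then $T^*\Omega=(\phi_Q\circ T)\,d(u_Q\circ T)=(\phi_Q\circ T)\,du_{\shifttorus(Q)}$, and comparing with $\Omega=\phi_{\shifttorus(Q)}\,du_{\shifttorus(Q)}$ near $\shifttorus(Q)$ gives $\phi_{\shifttorus(Q)}=\phi_Q\circ T$; evaluating at $\shifttorus(Q)$ yields $\phi_{\shifttorus(Q)}(\shifttorus(Q))=\phi_Q(Q)$ as required. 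This is a trivial direction slip and does not affect the validity of your strategy.
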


Because $g$ has at most simple poles the result is clear for $r>1$. The result when $r=1$ is a special case of \cite[Lemma~B.15]{Dreyfus2018} in which the function has two simple poles in the $\shifttorus$-orbit of $Q$. It is a technical lemma that requires the fact that the residues of a differential form on a compact Riemann surface must sum to zero. In the interest of space, we merely cite the proof and do not replicate it here.

Proposition~\ref{prop:algebraicAlmostSummable} and Lemma~\ref{lem:algebraicLPSummable} show that algebraic orbital residues fail as an oracle of determining summability, just as in the analytic case. Our goal is the same in the algebraic setting: define panorbital residues that let us define a reduction of elliptic functions similar to Theorem~\ref{thm:analyticReduction} that lets us see summability from knowing the values of a set of constants.

\section{Reduction of Elliptic Functions}

Just as in Chapter~\ref{c:analytic}, a reduction of an elliptic function $f\in\fieldAlg$ will be highly nonunique. In the analytic setting we used an analytic pinning to make a choice of to where the poles of $f$ will be gathered in the reduction. Here we must make even more choices. 

\begin{definition}
An \defemph{algebraic pinning} $\Xi$ is a collection of the following choices:
\begin{itemize}
    \item A coherent set $\mathcal{U}_{\Xi}$ of local uniformizers $u_Q$ for each point $Q$ on the curve,
    \item A set $\mathcal{R}_{\Xi}$ of representatives of $\shifttorus$ orbits of $E$.
    \item An anchor $\widehat{Q}\in\mathcal{R}_{\Xi}$.
\end{itemize}
\end{definition}

In all following discussions where $\Xi$ is considered to be fixed, we make the convention of dropping the subscript $\Xi$ from $\mathcal{U}_{\Xi}$ and $\mathcal{R}_{\Xi}$.

We wish to reduce an elliptic function $f$, by way of subtracting summable elements, to an elliptic function $\overline{f}$ with poles only at $\widehat{Q}$, $\widehat{Q}\oplus \shiftalg$, and the orbit representatives in $\mathcal{R}$. Depending on the choices made above the clerical work involved will vary in difficulty. The easiest case is when the poles of $f$ are all to one side of all the elements of $\mathcal{R}$. Denoting the orbit of $\widehat{Q}$ by $\widehat{\omega}=\{\widehat{Q}\oplus n\shiftalg\mid n\in\mathbb{Z}\}$, we also wish to specify that $\widehat{\omega}$ is free of any poles of $f$: this is to ensure that any poles that ``rain down'' from other orbits will not be interacting with any preexisting poles on $\widehat{\omega}$. With this in mind we make the following definition.

\begin{definition}
For an elliptic function $f$, let $\sing(f)$ denote the set of poles of $f$. We say an algebraic pinning $\Xi$ is \defemph{admissible} if the following are true:
\begin{align*}
    \sing(f) \cap \widehat{\omega} &= \emptyset,\\
    \sing(f)\cap \{Q\ominus n\shiftalg \mid n\in\mathbb{Z}_{\geq 0}\} &= \emptyset \ \text{ for all }Q\in\mathcal{R}_{\Xi}.
\end{align*}
\end{definition}

We must now define the functions with which we reduce. These functions will be defined in terms of their divisors and principal parts that we may utilize in the reduction. First, fix an admissible pinning $\Xi$. For each $\shifttorus$-orbit $\omega$, denote by $Q_{\omega}$ the unique point in the intersection $\omega\cap \mathcal{R}$.

\begin{lemma}
For each $\widehat{\omega}\neq \omega\in \Omega$ and $j\in \mathbb{N}$, there exist unique \[\varphi_{\omega,j}\in\mathcal{L}\left(j[Q_\omega]+\Bigl[\widehat{Q}\Bigr]\right)\qquad\text{and}\qquad d_j(\omega)\in K\qquad \text{such that}\]
\begin{itemize} 
\item $\varphi_{\omega,j}-u_{Q_\omega}^{-j}$ is nonsingular at $Q_\omega$; and \item $\varphi_{\omega,j}-d_j(\omega)\cdot u_{\widehat{Q}}^{-1}$ has a zero at $\widehat{Q}$.
\end{itemize}
\end{lemma}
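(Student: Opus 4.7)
The plan is to treat this as a linear-algebraic problem on the Riemann--Roch space $V := \mathcal{L}\bigl(j[Q_\omega]+\bigl[\widehat{Q}\bigr]\bigr)$. By Theorem~\ref{thm:riemannroch} applied to the effective divisor $j[Q_\omega]+[\widehat{Q}]$ of degree $j+1$, we have $\dim_K V = j+1$. I will show that the conditions demanded of $\varphi_{\omega,j}$ cut out exactly $j+1$ independent linear constraints on $V$, so that the desired function exists and is unique, and then $d_j(\omega)$ is determined after the fact.

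First I would translate the two bullet-point conditions into conditions on Laurent coefficients. Any $f \in V$ has principal part supported only at $Q_\omega$ (to order at most $j$) and at $\widehat{Q}$ (to order at most $1$), so I can write
\[
f \;=\; \sum_{k=1}^{j} c_{-k}(f)\,u_{Q_\omega}^{-k} + O(1) \text{ near } Q_\omega,
\qquad
f \;=\; b_{-1}(f)\,u_{\widehat{Q}}^{-1} + b_0(f) + O\bigl(u_{\widehat{Q}}\bigr) \text{ near } \widehat{Q}.
\]
The first bullet is equivalent to $c_{-j}(\varphi_{\omega,j})=1$ and $c_{-k}(\varphi_{\omega,j})=0$ for $1\le k\le j-1$, giving $j$ linear conditions. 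For the second bullet, regularity of $\varphi_{\omega,j} - d_j(\omega)\,u_{\widehat{Q}}^{-1}$ at $\widehat{Q}$ forces $d_j(\omega) = b_{-1}(\varphi_{\omega,j})$, so this constant is determined once $\varphi_{\omega,j}$ is known, and the vanishing-at-$\widehat{Q}$ requirement then reduces to the single condition $b_0(\varphi_{\omega,j})=0$.

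Next I would package these into a $K$-linear evaluation map
\[
L : V \longrightarrow K^{j+1}, \qquad f \longmapsto \bigl(c_{-j}(f),\,c_{-(j-1)}(f),\,\ldots,\,c_{-1}(f),\,b_0(f)\bigr),
\]
so that the lemma becomes the claim that the preimage $L^{-1}(1,0,\ldots,0,0)$ consists of exactly one element. Since the source and target both have dimension $j+1$, it suffices to show $L$ is injective. If $f\in \ker L$, then all of $f$'s principal-part coefficients at $Q_\omega$ vanish, so $f$ is regular at $Q_\omega$ and hence lies in $\mathcal{L}\bigl([\widehat{Q}]\bigr)$; by Example~\ref{ex:RRapplications} this space consists entirely of constants, and the remaining condition $b_0(f)=0$ then forces $f=0$.

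The main obstacle is really just the bookkeeping: matching the $j$ conditions that pin down the principal part at $Q_\omega$ together with the single normalization $b_0=0$ at $\widehat{Q}$ against the Riemann--Roch count $\ell(j[Q_\omega]+[\widehat{Q}])=j+1$, and ensuring that the injectivity of $L$ reduces cleanly to the fundamental fact that the only elliptic functions with at most one allowable simple pole are the constants. Once the unique $\varphi_{\omega,j} = L^{-1}(1,0,\ldots,0,0)$ has been produced, setting $d_j(\omega):=b_{-1}(\varphi_{\omega,j})$ yields both existence and uniqueness simultaneously.
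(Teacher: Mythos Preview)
Your proof is correct and takes a somewhat different route from the paper's. The paper argues by strong induction on $j$: for $j=1$ it picks a nonconstant element of $\mathcal{L}([Q_\omega]+[\widehat{Q}])$, rescales, and adds a constant; for the inductive step it picks $f\in\mathcal{L}(j[Q_\omega]+[\widehat{Q}])$ with an honest order-$j$ pole, normalizes the leading coefficient, and then subtracts off the previously built $\varphi_{\omega,k}$ for $k<j$ to clear the lower-order principal-part terms, finally correcting by a constant to force the zero at $\widehat{Q}$.

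Your approach sidesteps the induction entirely by packaging all $j+1$ conditions into a single linear map $L:V\to K^{j+1}$ and appealing to the dimension count $\ell(j[Q_\omega]+[\widehat{Q}])=j+1$; injectivity of $L$ follows from the same basic fact (only constants lie in $\mathcal{L}([\widehat{Q}])$) that the paper also leans on. This buys you uniqueness automatically and at the same moment as existence, whereas the paper's inductive construction establishes existence explicitly but leaves uniqueness implicit. The paper's version is slightly more hands-on in that it produces $\varphi_{\omega,j}$ as a concrete correction of an arbitrary starting $f$, which fits the constructive/reduction spirit of the surrounding section; yours is cleaner as a pure existence-and-uniqueness statement.
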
 \medskip

\begin{proof}
    We proceed by strong induction on $j$. Consider any choice of $\varphi_{\omega,1}\in\mathcal{L}\left([Q_\omega]+[\widehat{Q}]\right)$ and scale by a constant in $K$ so that $\varphi_{\omega,1}-u_{Q_{\omega}}^{-1}$ is nonsingular at $Q_{\omega}$. There clearly exists $d_1(\omega)$ for which $\varphi_{\omega,1}-d_1(\omega)\cdot u_{\widehat{Q}}^{-1}$ is nonsingular at $\widehat{Q}$. Add the proper constant in $K$ so it in fact has a zero at $\widehat{Q}$. This proves the base case $j=1$.

    For the inductive step assume that for all $j<M$ the result holds, for some $M\in\mathbb{Z}_{\geq 2}$. A quick inductive proof shows the existence of an element $f$ in the Riemann--Roch space with divisor $j[Q_{\omega}]+[\widehat{Q}]$ from Theorem~\ref{thm:riemannroch}. Suppose the local expansion of $f$ at $Q_{\omega}$ is
    \[f = \frac{c_j}{u_{Q_{\omega}}^j} + \cdots + \frac{c_1}{u_{Q_{\omega}}} + g,\]
    where $g$ is holomorphic at $Q_{\omega}$ and $c_1,\cdots,c_j\in K$ and $c_j\neq 0$. Then
    \[\frac{1}{c_j}f - \sum\limits_{k=1}^{j-1} \frac{c_k}{c_j}\varphi_{\omega,k}\]
    is nonsingular at $Q_{\omega}$ from the inductive hypothesis. Denote this function by $\varphi_{\omega,j}$. Then there exists $d_j(\omega)$ such that $\varphi_{\omega,j}-d_j(\omega)\cdot u_{\widehat{Q}}^{-1}$ is nonsingular at $\widehat{Q}$. Again correct by adding a constant in $K$ to ensure that it in fact has a zero at $\widehat{Q}$. This proves the inductive step, which completes the proof of the lemma.
\end{proof}

\begin{lemma}
    For each $j\in\mathbb{Z}_{\geq 2}$ there exists a unique
\[\psi_j\in\mathcal{L}\left( \Bigl[\widehat{Q}\oplus (j-1)\shiftalg\Bigr]+\Bigl[\widehat{Q}\Bigr]    \right)\]
such that $\psi_j+u_{\widehat{Q}}^{-1}$ has a zero at $\widehat{Q}$.
\end{lemma}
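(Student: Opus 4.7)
The plan is to apply the Riemann--Roch Theorem~\ref{thm:riemannroch} to the degree-$2$ effective divisor $D:=[\widehat{Q}\oplus(j-1)\shiftalg]+[\widehat{Q}]$ and then normalize an element of $\mathcal{L}(D)$ to produce $\psi_j$. The preliminary step is to verify that the two points of $D$ are distinct, so that $D$ does not collapse to $2[\widehat{Q}]$: this follows immediately from $j\geq 2$ together with the non-torsion hypothesis on $\shiftalg$. Riemann--Roch then gives $\ell(D)=2$, and since the one-dimensional subspace of constants sits inside $\mathcal{L}(D)$, there must exist a nonconstant $\phi\in\mathcal{L}(D)$. By Proposition~\ref{prop:algDivisorsOfFunctionsVanish} and the observations in Example~\ref{ex:RRapplications}, such a $\phi$ must have nonzero principal parts at \emph{both} points of $D$, since no nonconstant elliptic function can have a single simple pole.

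For existence, I will expand such a $\phi$ near $\widehat{Q}$ in the local uniformizer $u_{\widehat{Q}}$, writing $\phi = a\,u_{\widehat{Q}}^{-1} + b + O(u_{\widehat{Q}})$ with $a\in K^{\times}$ and $b\in K$. Then defining $\psi_j:=-\tfrac{1}{a}\phi+\tfrac{b}{a}$ will yield an element of $\mathcal{L}(D)$ whose local expansion at $\widehat{Q}$ begins $-u_{\widehat{Q}}^{-1}+0+O(u_{\widehat{Q}})$, making $\psi_j+u_{\widehat{Q}}^{-1}$ regular and zero at $\widehat{Q}$, as required.

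For uniqueness, if $\psi_j$ and $\psi_j'$ both satisfy the statement then their difference has vanishing principal part at $\widehat{Q}$, so it lies in the smaller Riemann--Roch space $\mathcal{L}\bigl([\widehat{Q}\oplus(j-1)\shiftalg]\bigr)$. By Riemann--Roch this space has dimension $1$, and by the single-pole observation it consists precisely of the constants. The condition that both $\psi_j+u_{\widehat{Q}}^{-1}$ and $\psi_j'+u_{\widehat{Q}}^{-1}$ vanish at $\widehat{Q}$ then forces $\psi_j-\psi_j'$ to be a constant that vanishes at $\widehat{Q}$, hence zero. The only subtle bookkeeping in the whole argument is the preliminary distinctness check that keeps $D$ honestly of degree $2$; the rest is a direct Riemann--Roch dimension count followed by a two-parameter normalization, so I do not anticipate any serious obstacle.
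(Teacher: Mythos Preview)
Your argument is correct and follows essentially the same approach as the paper: pick a nonconstant element of the two-dimensional Riemann--Roch space, scale it to normalize the residue at $\widehat{Q}$, then add a constant to kill the constant term. Your version is in fact more complete than the paper's, since you make explicit both the distinctness of the two points (needed so that $D$ is genuinely supported at two points) and the uniqueness argument via $\mathcal{L}\bigl([\widehat{Q}\oplus(j-1)\shiftalg]\bigr)=K$, neither of which the paper spells out.
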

\begin{proof}
    Consider any choice of nonsingular $\psi_j$ and scale by a constant in $K$ so that $\psi_j+u_{\widehat{Q}}^{-1}$ is nonsingular at $\widehat{Q}$. Add the proper constant in $K$ so it in fact has a zero at $\widehat{Q}$.
\end{proof}

Using these functions we reduce $f$ in two steps.
\begin{enumerate}[label=\arabic*.]
    \item Partially reduce $f$ to $\tilde{f}$ such that for all orbits $\omega\neq\widehat{\omega}$ the only pole of $f$ on $\omega$ is $Q_{\omega}$, creating order $1$ poles of $\tilde{f}$ on $\widehat{\omega}$ in the process.
    \item Reduce $\tilde{f}$ to $\overline{f}$ by collecting all poles on $\widehat{\omega}$ to $\widehat{Q}$ and $\widehat{Q}\oplus \shiftalg$.
\end{enumerate}

Throughout the following computations, we will primarily use the difference operator $\Delta^{(n)} := \Id-\shift^{-n}$ for all $n\in\mathbb{N}$. That is,
\[\Delta^{(n)}(f) = f-\shift^{-n}(f) = \sum\limits_{k=-n}^{1}\shift(\shift^k(f))-\shift^k(f).\]
Because $\shift$ is an automorphism on $\fieldAlg$ it follows that $\Delta^{(n)}(f)$ is summable for all $f\in\fieldAlg$ and $n\in\mathbb{N}$. Notice furthermore that $\Delta^{(n)}$ is \emph{not} defined to be the $n$-fold composition of $\Delta^{(1)}$.

\subsection{The First Reduction}

\begin{proposition}\label{prop:algReduction1}
    Let $\Xi$ be an admissible algebraic pinning of $f$. The following reduction $\tilde{f}$ of $f$ only has order-$1$ poles on $\widehat{\omega}$, and all other poles are elements of $\mathcal{R}$.
    
    \[\tilde{f} = f+\sum\limits_{\omega\neq \widehat{\omega}}\sum\limits_{m\geq 1}\sum\limits_{j\geq 1} \res_{\mathcal{U}}(f,Q_{\omega}\oplus m\shiftalg,j)\cdot \Delta^{(m)}(\varphi_{\omega,j}).\]
\end{proposition}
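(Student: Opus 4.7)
The plan is to verify the proposition pole-by-pole, by analyzing exactly what each summand contributes to the principal part of $\tilde{f}$ at each point of $E$. The three ingredients driving this verification are: the explicit principal-part data of $\varphi_{\omega,j}$ from its defining lemma; the coherence relation $u_{\shifttorus(Q)}=\shift(u_Q)$, iterated to $\shift^{-m}(u_Q)=u_{Q\oplus m\shiftalg}$; and the admissibility hypothesis, which locates all poles of $f$ on an orbit $\omega\neq\widehat{\omega}$ at points of the form $Q_\omega\oplus m\shiftalg$ with $m\geq 1$, and ensures $\widehat{\omega}$ is free of poles of $f$ to begin with.

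First I would record the principal parts of each summand. Since $\varphi_{\omega,j}$ has total principal part $u_{Q_\omega}^{-j}$ at $Q_\omega$ and $d_j(\omega)\cdot u_{\widehat{Q}}^{-1}$ at $\widehat{Q}$ and is regular elsewhere, applying $\shift^{-m}$ and invoking coherence shows that $\shift^{-m}(\varphi_{\omega,j})$ has principal parts $u_{Q_\omega\oplus m\shiftalg}^{-j}$ at $Q_\omega\oplus m\shiftalg$ and $d_j(\omega)\cdot u_{\widehat{Q}\oplus m\shiftalg}^{-1}$ at $\widehat{Q}\oplus m\shiftalg$, and is regular everywhere else. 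Consequently $\Delta^{(m)}(\varphi_{\omega,j})$ has singularities at exactly the four points $Q_\omega$, $Q_\omega\oplus m\shiftalg$, $\widehat{Q}$, $\widehat{Q}\oplus m\shiftalg$, with the expected principal parts.

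Next I would verify cancellation on each orbit $\omega\neq\widehat{\omega}$. At a pole $Q_\omega\oplus m\shiftalg$ of $f$ (where necessarily $m\geq 1$ by admissibility), the only summands contributing a singularity at that point are those indexed by the same pair $(\omega,m)$ as $j$ varies; they contribute combined principal part $-\sum_{j\geq 1}\res_{\mathcal{U}}(f,Q_\omega\oplus m\shiftalg,j)\cdot u_{Q_\omega\oplus m\shiftalg}^{-j}$, which is precisely minus the full principal part of $f$ at that point. Thus $f$'s pole there is annihilated in $\tilde{f}$. The remaining contributions of these same summands to the orbit $\omega$ all pile up at the single point $Q_\omega\in\mathcal{R}$, as the proposition requires, and may be of any order up to $\max_m\{\text{order of }f\text{ at }Q_\omega\oplus m\shiftalg\}$.

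Finally, on $\widehat{\omega}$, admissibility gives $f$ itself no poles, and each summand contributes only simple poles to this orbit: one at $\widehat{Q}$ from $\varphi_{\omega,j}$, and one at $\widehat{Q}\oplus m\shiftalg$ from $-\shift^{-m}(\varphi_{\omega,j})$. Hence the restriction of $\tilde{f}$ to $\widehat{\omega}$ has only order-$1$ singularities. The entire sum is finite-supported since $f$ has only finitely many poles, so convergence is not an issue. The main technical hurdle is establishing the coherence translation of principal parts $\shift^{-m}(u_Q)=u_{Q\oplus m\shiftalg}$ rigorously; once that translation is in hand, the rest of the proof reduces to direct bookkeeping.
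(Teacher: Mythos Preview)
Your proposal is correct and follows essentially the same approach as the paper: compute the principal parts of $\Delta^{(m)}(\varphi_{\omega,j})$ at its four possible poles, verify that the summands indexed by $(\omega,m,j)$ exactly cancel the principal part of $f$ at $Q_\omega\oplus m\shiftalg$, and observe that on $\widehat{\omega}$ only simple poles are introduced. Your explicit invocation of the iterated coherence relation $\shift^{-m}(u_Q)=u_{Q\oplus m\shiftalg}$ to justify the translated principal parts is a bit more detailed than the paper's argument, which simply records the residues of $\Delta^{(m)}(\varphi_{\omega,j})$ directly, but the substance is identical.
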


\begin{proof}
    First confirm that the above is indeed a reduction: $\Delta^{(m)}(\varphi_{\omega,j})$ is summable, so $f-\tilde{f}$ is a linear combination of summable functions and therefore summable.

    Now let us consider the local expansions of $\Delta^{(m)}(\varphi_{\omega,j})$. By definition the nonzero residues of $\varphi_{\omega,j}$ are
    \[\res_{\mathcal{U}}(\varphi_{\omega,j},Q_{\omega},j) = 1,\quad \res_{\mathcal{U}}(\varphi_{\omega,j},\widehat{Q},1) = d_j(\omega).\]
    So the nonzero residues of $\Delta^{(m)}(\varphi_{\omega,j})$ are
    \begin{align*}
        &\res_{\mathcal{U}}(\Delta^{(m)}(\varphi_{\omega,j}),Q_{\omega},j) = 1,\quad \res_{\mathcal{U}}(\Delta^{(m)}(\varphi_{\omega,j}),Q_{\omega}\oplus n\shiftalg,j) = -1,\\
        &\res_{\mathcal{U}}(\Delta^{(m)}(\varphi_{\omega,j}),\widehat{Q},1) = d_j(\omega),\quad \res_{\mathcal{U}}(\Delta^{(m)}(\varphi_{\omega,j}),\widehat{Q}\oplus n\shiftalg,1) = -d_j(\omega).
    \end{align*}
    Therefore the only terms that affect the residue of $\tilde{f}-f$ at $Q_{\omega}\oplus m\shiftalg$ of order $j$ for fixed $\omega\neq\widehat{\omega}$ and fixed $m,j\in\mathbb{N}$ are $f$ itself and the $\Delta^{(m)}(\varphi_{\omega,j})$ term. Therefore, for $\omega\neq\widehat{\omega}$,
    \[\res_{\mathcal{U}}(\tilde{f},Q_{\omega}\oplus m\shiftalg,j) = \res_{\mathcal{U}}(f,Q_{\omega}\oplus m\shiftalg,j) + \res_{\mathcal{U}}(f,Q_{\omega}\oplus m\shiftalg,j)\cdot (-1) = 0.\]
    The terms $\Delta^{(m)}(\varphi_{\omega,j})$ have no poles at $Q_{\omega}\ominus m\shiftalg$ for any $m\in\mathbb{N}$, so all residues of $\tilde{f}$ on orbits $\omega\neq\widehat{\omega}$ vanish except at the elements of $\mathcal{R}$. Because $f$ has no poles on $\widehat{\omega}$ and $\varphi_{\omega,j}$ only has poles of order $1$ on $\widehat{\omega}$, this proves the result.
    \end{proof}

To collect our bearings we now recompute all other residues of $\tilde{f}$ with respect to the algebraic pinning after this first reduction.

\begin{lemma}
    Let $\Xi$ be an admissible algebraic pinning of $f$. For $\tilde{f}$ as defined in Proposition~\ref{prop:algReduction1}, the remaining residues of $\tilde{f}$ are as follows:

    \begin{itemize}
        \item For $\omega\neq\widehat{\omega}$,
        \[\res_{\mathcal{U}}(\tilde{f},Q_{\omega},j) = \ores_{\mathcal{U}}(f,\omega,j).\]
        \item For $n>0$,
        \[\res_{\mathcal{U}}(\tilde{f},\widehat{Q}\oplus n\shiftalg,1) = -\sum\limits_{\omega\neq\widehat{\omega}}\sum\limits_{j\geq 1}\res_{\mathcal{U}} (f,Q_{\omega}\oplus n\shiftalg,j)\cdot d_j(\omega).\]
        \item For $n=0$,
        \[\res_{\mathcal{U}}(\tilde{f},\widehat{Q},1) = \sum\limits_{\omega\neq\widehat{\omega}}\sum\limits_{m\geq 1}\sum\limits_{j\geq 1}\res_{\mathcal{U}} (f,Q_{\omega}\oplus m\shiftalg,j)\cdot d_j(\omega).\]
    \end{itemize}
    All other residues of $\tilde{f}$ with respect to the algebraic pinning are zero.
\end{lemma}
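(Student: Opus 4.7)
The plan is to compute $\res_{\mathcal{U}}(\tilde{f},P,j)$ at every relevant point $P$ and order $j$ by summing the contributions of $f$ itself and of each of the correcting terms $\res_{\mathcal{U}}(f,Q_\omega\oplus m\shiftalg,j')\cdot \Delta^{(m)}(\varphi_{\omega,j'})$ that enter the definition of $\tilde{f}$. The two inputs I would use are (i) the complete residue data of each $\Delta^{(m)}(\varphi_{\omega,j'})$, already tabulated inside the proof of Proposition~\ref{prop:algReduction1}, and (ii) the hypothesis that $\Xi$ is admissible.

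First, I would make explicit the consequences of admissibility. On each orbit $\omega\neq\widehat{\omega}$ all poles of $f$ occur at $Q_\omega\oplus m\shiftalg$ for some $m\geq 1$, so
\[\ores_{\mathcal{U}}(f,\omega,j) \;=\; \sum_{m\geq 1}\res_{\mathcal{U}}(f,Q_\omega\oplus m\shiftalg,j),\]
and $\res_{\mathcal{U}}(f,Q_\omega,j)=0$; moreover $\res_{\mathcal{U}}(f,\widehat{Q}\oplus n\shiftalg,1)=0$ for every $n\in\mathbb{Z}$ since $f$ has no poles on $\widehat{\omega}$.

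Next I would perform three case checks using the residue table for $\Delta^{(m)}(\varphi_{\omega,j'})$, namely a residue of $+1$ of order $j'$ at $Q_\omega$, $-1$ of order $j'$ at $Q_\omega\oplus m\shiftalg$, $+d_{j'}(\omega)$ of order $1$ at $\widehat{Q}$, and $-d_{j'}(\omega)$ of order $1$ at $\widehat{Q}\oplus m\shiftalg$. For $P=Q_\omega$ with $\omega\neq\widehat{\omega}$ only the terms with matching orbit contribute, each supplying a factor of $+1$, so the sum collapses via the identity above to $\ores_{\mathcal{U}}(f,\omega,j)$, confirming the first bullet. For $P=\widehat{Q}\oplus n\shiftalg$ with $n>0$, only the summands with $m=n$ hit $P$, through the $-d_{j'}(\omega)$ residue, which after pairing with the coefficient $\res_{\mathcal{U}}(f,Q_\omega\oplus n\shiftalg,j')$ and summing over $\omega,j'$ produces the stated single sum (with its overall minus sign). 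For $P=\widehat{Q}$ every summand contributes through its $+d_{j'}(\omega)$ residue, and adding them with the coefficients from the reduction recovers the triple sum.

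Finally I would argue vanishing of all unlisted residues: the only points of $E$ at which either $f$ or some $\Delta^{(m)}(\varphi_{\omega,j'})$ has a pole are covered by the three cases above together with the points $Q_\omega\oplus m\shiftalg$ ($\omega\neq\widehat{\omega}$, $m\geq 1$) whose residues were already shown to cancel in Proposition~\ref{prop:algReduction1}. The main obstacle is clerical rather than conceptual: one must be vigilant about which of the four nonzero residues of each correcting term lands at each prescribed location, and about the sign conventions implicit in $\Delta^{(m)}=\Id-\shift^{-m}$, since the index $m$ appearing in the coefficient $\res_{\mathcal{U}}(f,Q_\omega\oplus m\shiftalg,j')$ is also the shift parameter that determines where on $\widehat{\omega}$ the cancelling residue $-d_{j'}(\omega)$ appears.
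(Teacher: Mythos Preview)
Your proposal is correct and follows essentially the same approach as the paper: you invoke the residue table for $\Delta^{(m)}(\varphi_{\omega,j'})$ from the proof of Proposition~\ref{prop:algReduction1}, use admissibility to kill the contribution of $f$ at $Q_\omega$ and on $\widehat{\omega}$, and then check the three cases by bookkeeping which correcting terms hit each point. The paper's proof does exactly this, with the same final observation that for $n<0$ all functions involved are regular at $\widehat{Q}\oplus n\shiftalg$.
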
\medskip

\begin{proof}
    For $\omega\neq\widehat{\omega}$, as stated in the proof of Proposition~\ref{prop:algReduction1},
    \[\res_{\mathcal{U}}(\Delta^{(m)}(\varphi_{\omega,j}),Q_{\omega},j) = 1.\]
    Therefore
    \[\res_{\mathcal{U}}(\tilde{f},Q_{\omega},j) = \res_{\mathcal{U}}(f,Q_{\omega},j) + \sum\limits_{m\geq 1} \res_{\mathcal{U}}(f,Q_{\omega}\oplus n\shiftalg,j)\cdot 1 .\]
    Because we chose an admissible algebraic pinning, $\res_{\mathcal{U}}(f,Q_{\omega},j)=0$. Similarly, the sum is precisely the orbital residue $\ores_{\mathcal{U}}(f,\omega,j)$. This proves that $\res_{\mathcal{U}}(\tilde{f},Q_{\omega},j)=\ores_{\mathcal{U}}(f,\omega,j)$ for all $\omega\neq\widehat{\omega}$.

    For $n>0$, note that $\res_{\mathcal{U}}(f,\widehat{Q}\oplus n\shiftalg,1)=0$ due to the pinning being admissible, so only $\Delta^{(n)}(\varphi_{\omega,j})$ will contribute to $\res_{\mathcal{U}}(\tilde{f},\widehat{Q}\oplus n\shiftalg,1)$ for each orbit $\omega\neq\widehat{\omega}$ and order $j\geq 1$, and as in the proof Proposition~\ref{prop:algReduction1},
    \[\res_{\mathcal{U}}(\Delta^{(n)}(\varphi_{\omega,j}),\widehat{Q}\oplus n\shiftalg,1) = -d_j(\omega).\]
    
    Therefore
    \begin{align*}
        \res_{\mathcal{U}}(\tilde{f},\widehat{Q}\oplus n\shiftalg,1)  &= \sum\limits_{\omega\neq\widehat{\omega}}\sum\limits_{j\geq 1} \res_{\mathcal{U}}(f,Q_{\omega}\oplus n\shiftalg,j)\cdot (-d_j(\omega))\\
        &=-\sum\limits_{\omega\neq\widehat{\omega}}\sum\limits_{j\geq 1} \res_{\mathcal{U}}(f,Q_{\omega}\oplus n\shiftalg,j)\cdot d_j(\omega).
    \end{align*}

    For the final residue $\res_{\mathcal{U}}(\tilde{f},\widehat{Q},1)$, note that $\res_{\mathcal{U}}(f,\widehat{Q},1) = 0$ due to $f$ being admissible, so only the terms $\Delta^{(m)}(\varphi_{\omega,j})$ will contribute to this residue for all $\omega\neq\widehat{\omega}$, all $m\geq 1$, and all orders $j\geq 1$, and
    \[\res_{\mathcal{U}} (\Delta^{(m)}(\varphi_{\omega,j}), \widehat{Q},1)  = d_j(\omega).\]
    Therefore
    \[\res_{\mathcal{U}}(\tilde{f},\widehat{Q},1)  = \sum\limits_{\omega\neq\widehat{\omega}}\sum\limits_{m\geq 1}\sum\limits_{j\geq 1} \res_{\mathcal{U}}(f,Q_{\omega}\oplus m\shiftalg,j)\cdot d_j(\omega).\]

    It remains to confirm that all uncomputed residues of $\tilde{f}$ actually vanish. From Proposition~\ref{prop:algReduction1} the only residues we have left to analyze are $\res_{\mathcal{U}}(\tilde{f},\widehat{Q}\oplus n\shiftalg,1)$ for $n<0$. All functions involved in the reduction, including $f$ itself, are by definition regular at $\widehat{Q}\oplus n\shiftalg$ for $n<0$. This proves the lemma.
\end{proof}

\newpage

\subsection{The Second Reduction}

\begin{proposition}\label{prop:algReduction2}
    Let $\Xi$ be an admissible algebraic pinning of $f$. Let $\tilde{f}$ be defined as in Proposition~\ref{prop:algReduction1}. The following reduction $\overline{f}$ of $f$ only has order $1$ poles on $\widehat{\omega}$ at $\hat{Q}$ and $\hat{Q}\oplus \shiftalg$ in particular, and all other poles are elements of $\mathcal{R}$.
    
    \[\overline{f} = \tilde{f} - \sum\limits_{m\geq 2} \sum\limits_{k=2}^{m} \left[\Delta^{(1)}(\psi_k) \sum\limits_{\omega\neq\widehat{\omega}} \sum\limits_{j\geq 1} \res_{\mathcal{U}}(f,Q_{\omega}\oplus m\shiftalg,j)\cdot d_j(\omega)\right].\]
\end{proposition}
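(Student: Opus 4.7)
The plan is to establish three facts about $\overline{f}$: (a) that $\overline{f}-f$ is summable, (b) that $\overline{f}$ introduces no poles off $\widehat{\omega}\cup\mathcal{R}$, and (c) that along $\widehat{\omega}$ the only surviving poles of $\overline{f}$ are the simple ones at $\widehat{Q}$ and $\widehat{Q}\oplus\shiftalg$. Items (a) and (b) are essentially immediate. Each difference $\Delta^{(1)}(\psi_k)=\shift(\shift^{-1}(\psi_k))-\shift^{-1}(\psi_k)$ is summable, so the correction term added to $\tilde{f}$ is a $K$-linear combination of summable functions; combined with Proposition~\ref{prop:algReduction1} this yields (a). For (b), the divisor of $\psi_k$ is supported on $\widehat{\omega}$, hence so is that of $\Delta^{(1)}(\psi_k)$, and the off-$\widehat{\omega}$ pole structure of $\tilde{f}$ is left untouched.

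The substance of the proof is (c). The plan is to set up bookkeeping for the residues on $\widehat{\omega}$ and compute. Using coherence of $\mathcal{U}$ in the form $\shift^{-1}(u_P)=u_{P\oplus\shiftalg}$, write $r_k:=\res_{\mathcal{U}}(\psi_k,\widehat{Q}\oplus(k-1)\shiftalg,1)$, the residue of $\psi_k$ at its pole away from $\widehat{Q}$. A short calculation then gives the residues of $\Delta^{(1)}(\psi_k)$ along $\widehat{\omega}$: for $k\geq 3$, they are $-1$ at $\widehat{Q}$, $+1$ at $\widehat{Q}\oplus\shiftalg$, $r_k$ at $\widehat{Q}\oplus(k-1)\shiftalg$, $-r_k$ at $\widehat{Q}\oplus k\shiftalg$, and zero elsewhere; for $k=2$ the second and third points coincide and the residue there is $1+r_2$. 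Abbreviating
$$B_m:=\sum_{\omega\neq\widehat{\omega}}\sum_{j\geq 1}\res_{\mathcal{U}}(f,Q_\omega\oplus m\shiftalg,j)\cdot d_j(\omega),$$
the output of Proposition~\ref{prop:algReduction1} reads $\res_{\mathcal{U}}(\tilde{f},\widehat{Q}\oplus n\shiftalg,1)=-B_n$ for $n\geq 1$. Swapping the two sums in the definition of $\overline{f}$, the subtracted expression becomes $\sum_{k\geq 2}C_k\cdot\Delta^{(1)}(\psi_k)$ with $C_k:=\sum_{m\geq k}B_m$. For each $n\geq 2$ only the terms $k=n$ and $k=n+1$ contribute to the residue at $\widehat{Q}\oplus n\shiftalg$, yielding
$$\res_{\mathcal{U}}(\overline{f},\widehat{Q}\oplus n\shiftalg,1)=-B_n+r_nC_n-r_{n+1}C_{n+1}.$$
Since $C_n-C_{n+1}=B_n$ by definition, this residue vanishes precisely when $r_k=1$ for every $k\geq 2$.

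The main obstacle is therefore to prove $r_k=1$. The plan is to apply the residue theorem to the meromorphic differential $\psi_k\omega_0$, where $\omega_0$ is the (unique up to scalar) translation-invariant global holomorphic differential on $E$. Writing $\omega_0=\alpha_P\,du_P+\cdots$ in the local coordinate $u_P$, translation invariance $\phi_{\shiftalg}^*\omega_0=\omega_0$ combined with coherence of $\mathcal{U}$ (which gives $\phi_{\shiftalg}^*(u_P)=u_{\shifttorus(P)}$) forces $\alpha_{\shifttorus(P)}=\alpha_P$, so $\alpha_P$ is constant along each $\shifttorus$-orbit. In particular $\alpha_{\widehat{Q}}=\alpha_{\widehat{Q}\oplus(k-1)\shiftalg}=:\alpha$. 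The two poles of $\psi_k\omega_0$ contribute residues $-\alpha$ and $r_k\alpha$ respectively, so the residue theorem gives $(r_k-1)\alpha=0$; since $\omega_0$ is nonvanishing, $\alpha\neq 0$ and $r_k=1$. A final routine check then confirms that no poles of $\overline{f}$ appear at $\widehat{Q}\oplus n\shiftalg$ for $n<0$, since neither $\tilde{f}$ nor any $\Delta^{(1)}(\psi_k)$ has a pole there, completing the verification of all three claims.
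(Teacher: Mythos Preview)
Your proof is correct and follows essentially the same strategy as the paper: verify that the correction is summable and supported on $\widehat{\omega}$, then compute residues at $\widehat{Q}\oplus n\shiftalg$ for $n\geq 2$ and watch them telescope to zero.

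The one substantive difference is that you make explicit the step $r_k=1$, i.e.\ that $\res_{\mathcal{U}}(\psi_k,\widehat{Q}\oplus(k-1)\shiftalg,1)=1$, and you prove it via the residue theorem applied to $\psi_k\omega_0$ with $\omega_0$ the invariant holomorphic differential. The paper simply asserts $\res_{\mathcal{U}}(\Delta^{(1)}(\psi_k),\widehat{Q}\oplus k\shiftalg,1)=-1$ without justification at that point; the underlying fact (that the two coherent residues of a function in $\mathcal{L}([P]+[P'])$ with $P,P'$ in the same $\shifttorus$-orbit sum to zero) is attributed earlier in the chapter to \cite[Lemma~B.15]{Dreyfus2018} but not invoked again in the proof of this proposition. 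Your argument via the invariant differential is exactly the right way to fill this in, and your observation that coherence of $\mathcal{U}$ forces the leading coefficient $\alpha_P$ of $\omega_0$ to be constant along $\shifttorus$-orbits is the key point. So your write-up is, if anything, more complete than the paper's on this step.
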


\begin{proof}
    Each $\psi_k$ has its two poles on $\widehat{\omega}$, so all poles of $\overline{f}$ off of $\widehat{\omega}$ are elements of $\mathcal{R}$ by virtue of Proposition~\ref{prop:algReduction1}. Furthermore, $\tilde{f}$ and $\psi_k$ only have order $1$ poles on $\widehat{\omega}$, so $\overline{f}$ does as well. In fact all these poles are at $\widehat{Q}\oplus n\shiftalg$ for $n\geq 0$. It therefore only remains to verify that
    \[\res_{\mathcal{U}}(\overline{f},\widehat{Q}\oplus n\shiftalg,1)=0\]
    for all integers $n\geq 2$.

    By definition, the poles of
    \[\Delta^{(1)}(\psi_k) = \psi_k-\tau^{-1}(\psi_k)\]
    are at $\widehat{Q}$ and $\widehat{Q}\oplus (k-1)\shiftalg$ contributed by $\psi_k$, and at $\widehat{Q}\oplus\shiftalg$ and $\widehat{Q}\oplus k\shiftalg$ contributed by $\tau^{-1}(\psi_k)$. In particular,
    \[\res_{\mathcal{U}}(\Delta^{(1)}(\psi_k),\widehat{Q}\oplus kS,1) = -\res_{\mathcal{U}}(\shift^{-1}(\psi_k),\widehat{Q}\oplus k\shiftalg,1) = -1.\]
    Therefore the order $1$ residue of $\overline{f}$ at $\widehat{Q}\oplus n\shiftalg$ for $n\geq 2$ is
    \begin{multline}
    \res_{\mathcal{U}}(\overline{f},\widehat{Q}\oplus n\shiftalg,1)
    = \  \res_{\mathcal{U}}(\tilde{f},\widehat{Q}\oplus n\shiftalg,1) \\
    - \sum\limits_{m\geq n+1} \sum_{k=n}^{n+1} \res_{\mathcal{U}}(\Delta^{(1)}(\psi_k),Q_{\omega}\oplus n\shiftalg,1)\cdot \sum\limits_{\omega\neq\widehat{\omega}}\sum\limits_{j\geq 1} \res_{\mathcal{U}}(f,Q_{\omega}\oplus m\shiftalg,j)\cdot d_j(\omega)\\
    - \res_{\mathcal{U}}(\Delta^{(1)}(\psi_n),Q_{\omega}\oplus n\shiftalg,1)\cdot\sum\limits_{\omega\neq\widehat{\omega}}\sum\limits_{j\geq 1}\res_{\mathcal{U}}(f,Q_{\omega}\oplus n\shiftalg,j)\cdot d_j(\omega)\nonumber\end{multline}

    \begin{align*}
    = \ & \res_{\mathcal{U}}(\tilde{f},\widehat{Q}\oplus n\shiftalg,1) 
     - \sum\limits_{m\geq n+1} (1-1)\cdot \sum\limits_{\omega\neq\widehat{\omega}}\sum\limits_{j\geq 1} \res_{\mathcal{U}}(f,Q_{\omega}\oplus m\shiftalg,j)\cdot d_j(\omega)\\
    & +\sum\limits_{\omega\neq\widehat{\omega}}\sum\limits_{j\geq 1}\res_{\mathcal{U}}(f,Q_{\omega}\oplus n\shiftalg,j)\cdot d_j(\omega)\\
    = \ & -\sum\limits_{\omega\neq\widehat{\omega}}\sum\limits_{j\geq 1}\res_{\mathcal{U}} (f,Q_{\omega}\oplus n\shiftalg,j)\cdot d_j(\omega) + \sum\limits_{\omega\neq\widehat{\omega}}\sum\limits_{j\geq 1}\res_{\mathcal{U}}(f,Q_{\omega}\oplus n\shiftalg,j)\cdot d_j(\omega) \\
    = \ & 0. 
    \end{align*}
    This proves that the above reduction has its poles at the prescribed locations.
\end{proof}

\begin{proposition}
\label{prop:algReductionEnd}
    Let $\psi_1\in \fieldAlg$ be the zero function, and let $\Psi_m := \sum\limits_{k=1}^{m} \psi_k$. The local expansion of $\overline{f}$ at $\widehat{Q}$ is given by
    \begin{align*}
        \Bigg(&\sum\limits_{\omega\neq\widehat{\omega}} \sum\limits_{j,m\geq 1} \res_{\mathcal{U}}(f,Q_{\omega}\oplus m\shiftalg,j)\cdot d_j(\omega)\cdot m \Bigg) \cdot u_{\widehat{Q}}^{-1} \\
        +\Bigg(& f(\widehat{Q}) - \sum\limits_{\omega\neq\widehat{\omega}} \sum\limits_{j,m\geq 1} \res_{\mathcal{U}} (f,Q_{\omega}\oplus m\shiftalg,j)\cdot \varphi_{\omega,j}(\widehat{Q}\ominus m\shiftalg) \\
        &+ \sum\limits_{\omega\neq \widehat{\omega}} \sum\limits_{j,m\geq 1} \res_{\mathcal{U}} (f,Q_{\omega}\oplus m\shiftalg,j)\cdot d_j(\omega)\cdot \Psi_m(\widehat{Q}\ominus\shiftalg) \Bigg) \cdot u_{\widehat{Q}}^{0} \\
        &+ g,
    \end{align*}
    where $g\in\fieldAlg$ has a zero at $\widehat{Q}$.
\end{proposition}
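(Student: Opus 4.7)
My plan is to compute the local expansion at $\widehat{Q}$ of each summand in the definition of $\overline{f}$ (using the defining normalizations of $\varphi_{\omega,j}$ and $\psi_k$), and then aggregate. The starting point is that $\overline{f}=\tilde{f} - \sum_{m\ge 2}\sum_{k=2}^{m}\Delta^{(1)}(\psi_k)\cdot A_m$, where $A_m := \sum_{\omega\neq\widehat{\omega}}\sum_{j\ge 1}\res_{\mathcal{U}}(f,Q_{\omega}\oplus m\shiftalg,j)\,d_j(\omega)$ is a constant in $K$. Writing $B_{\omega,j,m}:=\res_{\mathcal{U}}(f,Q_{\omega}\oplus m\shiftalg,j)$, I will first compute the expansion of $\tilde{f}=f+\sum_{\omega\neq\widehat{\omega},\,m,j\ge 1}B_{\omega,j,m}\,\Delta^{(m)}(\varphi_{\omega,j})$ at $\widehat{Q}$, then the expansion of the correction term, then add.

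For $\Delta^{(m)}(\varphi_{\omega,j})=\varphi_{\omega,j}-\shift^{-m}(\varphi_{\omega,j})$, I observe that by admissibility $\widehat{Q}\ominus m\shiftalg$ (for $m\ge 1$) lies neither in $\widehat{\omega}$ nor in $\omega$, so $\shift^{-m}(\varphi_{\omega,j})$ is regular at $\widehat{Q}$ with value $\varphi_{\omega,j}(\widehat{Q}\ominus m\shiftalg)$. The defining property that $\varphi_{\omega,j}-d_j(\omega)u_{\widehat{Q}}^{-1}$ has a zero at $\widehat{Q}$ gives principal part $d_j(\omega)u_{\widehat{Q}}^{-1}$ and zero constant term for $\varphi_{\omega,j}$ at $\widehat{Q}$. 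Hence $\Delta^{(m)}(\varphi_{\omega,j})$ contributes principal part $d_j(\omega)u_{\widehat{Q}}^{-1}$ and constant term $-\varphi_{\omega,j}(\widehat{Q}\ominus m\shiftalg)$. Summing yields that $\tilde f$ has coefficient $\sum_{m\ge 1}A_m$ on $u_{\widehat{Q}}^{-1}$ and coefficient $f(\widehat{Q})-\sum_{\omega,j,m}B_{\omega,j,m}\varphi_{\omega,j}(\widehat{Q}\ominus m\shiftalg)$ on $u_{\widehat{Q}}^{0}$.

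Next, for $\Delta^{(1)}(\psi_k)=\psi_k-\shift^{-1}(\psi_k)$ at $\widehat{Q}$: the pole of $\psi_k$ at $\widehat{Q}\oplus(k-1)\shiftalg$ is moved to $\widehat{Q}\oplus k\shiftalg$ by $\shift^{-1}$, neither of which is $\widehat{Q}$ for $k\ge 2$ (using that $\shiftalg$ is non-torsion), so $\shift^{-1}(\psi_k)$ is regular at $\widehat{Q}$ with value $\psi_k(\widehat{Q}\ominus\shiftalg)$. The normalization $\psi_k+u_{\widehat{Q}}^{-1}$ vanishes at $\widehat{Q}$ gives principal part $-u_{\widehat{Q}}^{-1}$ and zero constant term for $\psi_k$. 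Thus each $\Delta^{(1)}(\psi_k)$ contributes $-u_{\widehat{Q}}^{-1}$ on the polar part and $-\psi_k(\widehat{Q}\ominus\shiftalg)$ on the constant part. Multiplying by $A_m$ and summing over $k=2,\dots,m$ and $m\ge 2$, the subtracted correction contributes $-\sum_{m\ge 2}(m-1)A_m$ on $u_{\widehat{Q}}^{-1}$ and $-\sum_{m\ge 2}A_m\,\Psi_m(\widehat{Q}\ominus\shiftalg)$ on $u_{\widehat{Q}}^{0}$ (using $\psi_1=0$ so that $\Psi_m=\sum_{k=2}^m\psi_k$).

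Finally I assemble: the $u_{\widehat{Q}}^{-1}$ coefficient of $\overline{f}$ is $\sum_{m\ge 1}A_m+\sum_{m\ge 2}(m-1)A_m=\sum_{m\ge 1}m\,A_m$, which unpacks to the stated expression. The $u_{\widehat{Q}}^{0}$ coefficient is $f(\widehat{Q})-\sum_{\omega,j,m\ge 1}B_{\omega,j,m}\varphi_{\omega,j}(\widehat{Q}\ominus m\shiftalg)+\sum_{m\ge 2}A_m\,\Psi_m(\widehat{Q}\ominus\shiftalg)$; extending the last sum to $m\ge 1$ is free because $\Psi_1=\psi_1=0$, producing exactly the stated formula. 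The remaining part has a zero at $\widehat{Q}$ because every one of $f$, $\varphi_{\omega,j}-d_j(\omega)u_{\widehat{Q}}^{-1}$, $\psi_k+u_{\widehat{Q}}^{-1}$, and the evaluated translates $\shift^{-m}(\varphi_{\omega,j})$, $\shift^{-1}(\psi_k)$ either vanish to order one at $\widehat{Q}$ or are regular there, so after subtracting off the explicit $u_{\widehat{Q}}^{-1}$ and $u_{\widehat{Q}}^{0}$ coefficients found above, the residual $g$ indeed vanishes at $\widehat{Q}$. The only real hazard in this argument is bookkeeping: carefully tracking which translates land at $\widehat{Q}$ versus a regular point and exploiting the clever choice $\psi_1=0$ to absorb the $m=1$ boundary case uniformly.
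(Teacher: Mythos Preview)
Your proof is correct and follows essentially the same approach as the paper: compute the order~$1$ and order~$0$ coefficients at $\widehat{Q}$ by tracking the contributions of $\varphi_{\omega,j}$, $\shift^{-m}(\varphi_{\omega,j})$, $\psi_k$, and $\shift^{-1}(\psi_k)$ separately, then combine using the $\psi_1=0$ convention to absorb the $m=1$ boundary. One small slip in wording: you write that $\widehat{Q}\ominus m\shiftalg$ ``lies neither in $\widehat{\omega}$ nor in $\omega$,'' but of course $\widehat{Q}\ominus m\shiftalg\in\widehat{\omega}$ by definition---what you need (and correctly use) is only that this point is distinct from the two poles $Q_\omega$ and $\widehat{Q}$ of $\varphi_{\omega,j}$, which follows since $\omega\neq\widehat{\omega}$ and $\shiftalg$ is non-torsion.
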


\begin{proof}
    We find the coefficients by analyzing the contributions of the first and second reductions to the order $1$ and $0$ residues at $\widehat{Q}$. For order $1$ residues: recall the first reduction
    \[\tilde{f} = f+\sum\limits_{\omega\neq \widehat{\omega}}\sum\limits_{m\geq 1}\sum\limits_{j\geq 1} \res_{\mathcal{U}}(f,Q_{\omega}\oplus m\shiftalg,j)\cdot \Delta^{(m)}(\varphi_{\omega,j})\]
    where $\res_{\mathcal{U}}(\Delta^{(m)}(\varphi_{\omega,j}),\widehat{Q},1) = d_j(\omega)$, and the second reduction
        \[\overline{f} = \tilde{f} - \sum\limits_{m\geq 2} \sum\limits_{k=2}^{m} \left[\Delta^{(1)}(\psi_k) \sum\limits_{\omega\neq\widehat{\omega}} \sum\limits_{j\geq 1} \res_{\mathcal{U}}(f,Q_{\omega}\oplus m\shiftalg,j)\cdot d_j(\omega)\right].\]
    where $\res_{\mathcal{U}}(\Delta^{(1)}(\psi_k),\widehat{Q},1) = -1$. Because we chose an admissible algebraic pinning we have $\res_{\mathcal{U}}(f,\widehat{Q},1)=0$, therefore
    \begin{align*}
        \res_{\mathcal{U}}(\overline{f},\widehat{Q},1) = \ & \sum\limits_{\omega\neq \widehat{\omega}}\sum\limits_{m\geq 1}\sum\limits_{j\geq 1} \res_{\mathcal{U}}(f,Q_{\omega}\oplus m\shiftalg,j)\cdot d_j(\omega) \\
        & + \sum\limits_{m\geq 2} \sum\limits_{k=2}^{m} \sum\limits_{\omega\neq\widehat{\omega}} \sum\limits_{j\geq 1} \res_{\mathcal{U}}(f,Q_{\omega}\oplus m\shiftalg,j)\cdot d_j(\omega),\\
        = \ & \sum\limits_{m\geq 1} \sum\limits_{k=1}^{m} \sum\limits_{\omega\neq\widehat{\omega}} \sum\limits_{j\geq 1} \res_{\mathcal{U}}(f,Q_{\omega}\oplus m\shiftalg,j)\cdot d_j(\omega)\\
        = \ &  \sum\limits_{\omega\neq\widehat{\omega}} \sum\limits_{j,m\geq 1} \res_{\mathcal{U}}(f,Q_{\omega}\oplus m\shiftalg,j)\cdot d_j(\omega)\cdot m
    \end{align*}
    as desired.

    Because $f$ is regular at $\widehat{Q}$, the contribution of $f$ to the order $0$ residue of $\overline{f}$ at $\widehat{Q}$ is simply the evaluation $f(\widehat{Q})$. The first reduction uses terms of the form
    \[\Delta^{(m)}(\varphi_{\omega,j}) = \varphi_{\omega,j} - \shift^{-m}(\varphi_{\omega,j}).\]
    The term $\varphi_{\omega,j}$ does not contribute by definition, whereas the contribution from $-\shift^{-m}(\varphi_{\omega,j})$ is the evaluation $-\varphi_{\omega,j}(\widehat{Q}\ominus m\shiftalg)$. The second reduction uses terms of the form
    \[\Delta^{(1)}(\psi_k) = \psi_k - \shift^{-1}(\psi_k).\]
    The term $\psi_k$ does not contribute by definition, whereas the contribution from $-\shift^{-1}(\psi_k)$ is the evaluation $-\psi_k(\widehat{Q}\ominus\shiftalg)$. Consequently the order $0$ residue of $\overline{f}$ at $\widehat{Q}$ is given by
    \begin{align*}
        \res_{\mathcal{U}}(\overline{f},\widehat{Q},0) = \ &  f(\widehat{Q}) - \sum\limits_{\omega\neq \widehat{\omega}}\sum\limits_{m\geq 1}\sum\limits_{j\geq 1} \res_{\mathcal{U}}(f,Q_{\omega}\oplus m\shiftalg,j)\cdot \varphi_{\omega,j}(\widehat{Q}\ominus m\shiftalg) \\
        +& \sum\limits_{m\geq 2} \sum\limits_{k=2}^{m}  \sum\limits_{\omega\neq\widehat{\omega}} \sum\limits_{j\geq 1} \res_{\mathcal{U}}(f,Q_{\omega}\oplus m\shiftalg,j)\cdot d_j(\omega)\psi_k(\widehat{Q}\ominus\shiftalg)
    \end{align*}
    \begin{align*}
    = \ &f(\widehat{Q}) - \sum\limits_{\omega\neq \widehat{\omega}}\sum\limits_{j,m\geq 1} \res_{\mathcal{U}}(f,Q_{\omega}\oplus m\shiftalg,j)\cdot \varphi_{\omega,j}(\widehat{Q}\ominus m\shiftalg) \\
        +& \sum\limits_{m\geq 1} \sum\limits_{k=1}^{m}  \sum\limits_{\omega\neq\widehat{\omega}} \sum\limits_{j\geq 1} \res_{\mathcal{U}}(f,Q_{\omega}\oplus m\shiftalg,j)\cdot d_j(\omega)\psi_k(\widehat{Q}\ominus\shiftalg)\\
    = \ &f(\widehat{Q}) - \sum\limits_{\omega\neq \widehat{\omega}}\sum\limits_{j,m\geq 1} \res_{\mathcal{U}}(f,Q_{\omega}\oplus m\shiftalg,j)\cdot \varphi_{\omega,j}(\widehat{Q}\ominus m\shiftalg) \\
        +&   \sum\limits_{\omega\neq\widehat{\omega}} \sum\limits_{j,m\geq 1} \res_{\mathcal{U}}(f,Q_{\omega}\oplus m\shiftalg,j)\cdot d_j(\omega)\Psi_k(\widehat{Q}\ominus\shiftalg)
    \end{align*}
    as desired.
\end{proof}

This finishes the reduction process. We now have the proper definition of the panorbital residues of an elliptic function.

\begin{definition}
    Let $\Xi$ be an admissable pinning of $f\in\fieldAlg$ with associated coherent set $\mathcal{U}$ of local uniformizers. Let $\Psi_m := \sum\limits_{k=1}^{m} \psi_k$. The \defemph{algebraic panorbital residues} of order $1$ and $0$ of $f$ relative to $\Xi$ are defined to be
    \begin{align*}
        \pano_{\Xi}(f,1) = \ & \sum\limits_{\omega\neq\widehat{\omega}} \sum\limits_{j,m\geq 1} \res_{\mathcal{U}}(f,Q_{\omega}\oplus m\shiftalg,j)\cdot d_j(\omega)\cdot m,\\
        \pano_{\Xi}(f,0) = \ & f(\widehat{Q}) - \sum\limits_{\omega\neq\widehat{\omega}} \sum\limits_{j,m\geq 1} \res_{\mathcal{U}} (f,Q_{\omega}\oplus m\shiftalg,j)\cdot \varphi_{\omega,j}(\widehat{Q}\ominus m\shiftalg) \\
        &+ \sum\limits_{\omega\neq \widehat{\omega}} \sum\limits_{j,m\geq 1} \res_{\mathcal{U}} (f,Q_{\omega}\oplus m\shiftalg,j)\cdot d_j(\omega)\cdot \Psi_m(\widehat{Q}\ominus\shiftalg).
    \end{align*}
\end{definition}

\subsection{Algebraic Statement of the Main Result}

With this definition of algebraic panorbital residues, the results of Proposition~\ref{prop:algReductionEnd}, and the additional result \cite[Proposition~B.8]{Dreyfus2018}, we can now conclude the following main theorem.
\newpage 
\begin{theorem}
    Let $f\in\fieldAlg$ be elliptic and let $\Xi$ be an admissable algebraic pinning. Then $f$ is summable if and only if the algebraic orbital residues of $f$ and the algebraic panorbital residues of $f$ relative to $\Xi$ all vanish.
\end{theorem}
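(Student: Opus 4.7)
The plan is to leverage the explicit reduction $f = (\shift(g) - g) + \overline{f}$ furnished by Propositions~\ref{prop:algReduction1}, \ref{prop:algReduction2}, and \ref{prop:algReductionEnd}, together with Lemma~\ref{lem:algebraicLPSummable}, to read off summability from the residues. By construction $\overline{f}$ has poles only at $\widehat{Q}$, at $\widehat{Q}\oplus\shiftalg$, and at the orbit representatives $Q_\omega$ for $\omega\neq\widehat{\omega}$; the lemma following Proposition~\ref{prop:algReduction1} identifies $\res_{\mathcal{U}}(\overline{f},Q_\omega,j) = \ores_{\mathcal{U}}(f,\omega,j)$, and Proposition~\ref{prop:algReductionEnd} identifies the order $-1$ and order $0$ coefficients of the local expansion of $\overline{f}$ at $\widehat{Q}$ as $\pano_{\Xi}(f,1)$ and $\pano_{\Xi}(f,0)$ respectively.

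For the $(\Leftarrow)$ direction, suppose all orbital and panorbital residues of $f$ vanish. The identifications above imply that $\overline{f}$ is regular at every $Q_\omega$ with $\omega\neq\widehat{\omega}$, regular at $\widehat{Q}$, and evaluates to $0$ there. Because each correction term $\Delta^{(m)}(\varphi_{\omega,j})$ and $\Delta^{(1)}(\psi_k)$ used in the reduction is summable, orbital residues are preserved under the reduction, so $\ores_{\mathcal{U}}(\overline{f},\widehat{\omega},1) = \ores_{\mathcal{U}}(f,\widehat{\omega},1) = 0$. Since $\overline{f}$ has at most simple poles at $\widehat{Q}$ and $\widehat{Q}\oplus\shiftalg$ on $\widehat{\omega}$, vanishing of the residue at $\widehat{Q}$ forces vanishing at $\widehat{Q}\oplus\shiftalg$ as well. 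Thus $\overline{f}$ has no poles, so by Example~\ref{ex:RRapplications} it is constant, and that constant equals its value at $\widehat{Q}$, namely $\pano_{\Xi}(f,0) = 0$. Hence $f = \shift(g) - g$ is summable.

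For the $(\Rightarrow)$ direction, suppose $f = \shift(h) - h$. The orbital residues of $f$ vanish by the algebraic analogue of Lemma~\ref{lem:analyticOrbitalResidues} (a direct consequence of \cite[Lemma~B.15]{Dreyfus2018} and additivity). The reduction then produces $\overline{f} = f - (\shift(g)-g) = \shift(h - g) - (h - g)$, which is again summable. Using $\res_{\mathcal{U}}(\overline{f},Q_\omega,j) = \ores_{\mathcal{U}}(f,\omega,j) = 0$ for all $\omega\neq\widehat{\omega}$, we obtain $\overline{f}\in\mathcal{L}([\widehat{Q}]+[\widehat{Q}\oplus\shiftalg])$. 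Applying Lemma~\ref{lem:algebraicLPSummable} with $Q = \widehat{Q}\oplus\shiftalg$, so that $\shifttorus(Q) = \widehat{Q}$, the only summable element of this Riemann--Roch space is $0$; hence $\overline{f} = 0$. Reading off the local expansion at $\widehat{Q}$ via Proposition~\ref{prop:algReductionEnd} then yields $\pano_{\Xi}(f,1) = \pano_{\Xi}(f,0) = 0$.

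The main obstacle I anticipate is the residue bookkeeping across the two reduction steps: one must confirm that orbital residues are genuinely invariant under each summable correction $\Delta^{(m)}(\varphi_{\omega,j})$ and $\Delta^{(1)}(\psi_k)$, and that the residue of $\overline{f}$ at $\widehat{Q}\oplus\shiftalg$ is controlled entirely by the orbital residue at $\widehat{\omega}$ together with the already-identified coefficient $\pano_{\Xi}(f,1)$ at $\widehat{Q}$. The reduction was engineered precisely so that the remainder $\overline{f}$ has its data concentrated at $\widehat{Q}$ (mirrored at $\widehat{Q}\oplus\shiftalg$ via the orbital sum on $\widehat{\omega}$), so once those invariance checks are verified, the chain of equivalences is forced and the theorem follows.
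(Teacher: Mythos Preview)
Your proposal is correct and follows essentially the same route as the paper. The paper's own argument is deliberately terse---it simply remarks that the theorem follows from Proposition~\ref{prop:algReductionEnd} together with \cite[Proposition~B.8]{Dreyfus2018}---and you have supplied precisely the details that this remark elides: the reduction forces $\overline{f}\in\mathcal{L}([\widehat{Q}]+[\widehat{Q}\oplus\shiftalg])$ once the orbital residues vanish, and Lemma~\ref{lem:algebraicLPSummable} then pins down $\overline{f}=0$ exactly when the panorbital residues vanish. One minor citational point: the invariance of orbital residues under summable corrections in the algebraic setting is more directly the $(1)\Rightarrow(2)$ direction of Proposition~\ref{prop:algebraicAlmostSummable} (with $e=0$) rather than \cite[Lemma~B.15]{Dreyfus2018}, but this does not affect the validity of your argument.
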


\chapter{Future Directions: Differential Dependence}
\label{c:future}

As alluded to in Chapters~\ref{c:intro} and \ref{c:basicResidues}, criteria for summability play a basic role in calculating the differential Galois groups of linear difference equations. While this has not been explicitly shown in the elliptic case, it is the case that summability is an integral facet of the current algorithms, developed in \cite{Arreche2016} and \cite{Arreche2022}, for these calculations in the shift and $q$-difference cases discussed in Section~\ref{s:residueExamples}. In particular there exist results that concern differential dependence of a particular family of solutions $\{z_k\}$, $k=1,\dots,m$, to a system of first-order linear difference equations $\tau(z_k)=a_kz_k$. With criteria now established for summability in the elliptic case, a similar result should hold in principle.

\section{Previous Results on Differential Dependence and Summability}

Let us first review the requisite concepts from differential algebra. Consider a \defemph{$\delta$-algebra} $R$: an algebra $R$ equipped with a derivation $\delta$. A \defemph{differential polynomial} $L(Y_1,\dots,Y_n)$ is an element of the polynomial ring over $R$ with infinitely many variables $\delta^{(i)}(Y_j)$ for $i\geq 1$ and $j=1,\dots,n$. If furthermore all the terms of $L(Y_1,\dots,Y_n)$ belong to the $R$-span of the variables $\delta^{(i)}(Y_j)$, we call the differential polynomial a \defemph{homogeneous linear differential polynomial}. If $S$ is a $\delta$-algebra containing $R$, then $z_1$, $\dots$, $z_n$ in $S$ are \defemph{differentially dependent} over $R$ if there exists a nonzero differential polynomial $L(Y_1,\dots,Y_n)$ with coefficients in $R$ such that $L(z_1,\dots,z_n)=0$.

We have the following lemma on differential dependence of variables. This result is also proved in \cite{kolchinDifferentialAlgebraAlgebraic1973}, but the statement that we need is elementary so we give the proof here.

\begin{lemma}\label{lem:difDep}
    Let $R$ be a $\delta$-field, and $S\supset R$ be a $\delta$-field containing $z_1$, $\dots$, $z_n$. Then $z_1$, $\dots$, $z_n$ are differentially dependent over $R$ if and only if the logarithmic derivatives
    \[\frac{\delta(z_1)}{z_1},\quad\dots,\quad \frac{\delta(z_n)}{z_n}\]
    are differentially dependent over $R$.
\end{lemma}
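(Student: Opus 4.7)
The plan is to exploit the tight relationship between the differential subfields $R\langle z_1, \ldots, z_n \rangle$ and $R\langle \ell_1, \ldots, \ell_n \rangle$ of $S$, where $\ell_i := \delta(z_i)/z_i$. First I would establish, by induction on $k$, the key recursion $\delta^{(k)}(z_i) = P_{k,i}\cdot z_i$, where $P_{k,i}$ is a differential polynomial in $\ell_i$ with integer coefficients (with $P_{0,i}=1$ and $P_{k+1,i} = \delta(P_{k,i}) + \ell_i P_{k,i}$). Inverting the recursion conversely expresses each $\delta^{(k)}(\ell_i)$ as a polynomial in $z_i, \delta(z_i), \ldots, \delta^{(k+1)}(z_i)$ divided by $z_i^{k+1}$, so in particular $R\langle z_i \rangle = R\langle \ell_i \rangle(z_i)$.

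The central observation is that each $z_i$ satisfies the first-order linear differential equation $\delta(z_i) - \ell_i z_i = 0$ with coefficients in $R\langle \ell_1, \ldots, \ell_n \rangle$, so the extension $R\langle z_1, \ldots, z_n \rangle \supseteq R\langle \ell_1, \ldots, \ell_n \rangle$ is differentially algebraic. Invoking the additivity of differential transcendence degree in towers, see \cite{kolchinDifferentialAlgebraAlgebraic1973}, then yields
\[\mathrm{diff.tr.deg}_R\, R\langle z_1, \ldots, z_n \rangle = \mathrm{diff.tr.deg}_R\, R\langle \ell_1, \ldots, \ell_n \rangle.\]
Since an $n$-tuple is differentially dependent over $R$ if and only if its differential transcendence degree over $R$ is strictly less than $n$, this equality gives both directions of the stated equivalence at once.

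The hard part will be confirming the additivity of differential transcendence degree through a differentially algebraic extension, which is standard but nontrivial and requires careful invocation of Kolchin's differential analogue of the classical tower formula. If one instead preferred a more hands-on proof of the easier direction $(\Leftarrow)$, one could substitute the Laurent-type expressions for $\delta^{(k)}(\ell_i)$ directly into a putative relation $L(\ell_1, \ldots, \ell_n)=0$ and clear denominators by a sufficiently high power of $\prod_i z_i$ to produce a differential polynomial $\tilde L$ with $\tilde L(z_1, \ldots, z_n) = 0$; however, the remaining obstacle of verifying that $\tilde L$ is genuinely nonzero as a formal differential polynomial reduces once more to the same transcendence-degree statement, so the unified approach via transcendence degree is the one I would ultimately write up.
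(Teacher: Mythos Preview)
Your argument is correct. The key observation that each $z_i$ satisfies the first-order equation $\delta(z_i)=\ell_i z_i$ over $R\langle \ell_1,\dots,\ell_n\rangle$, so that the upper extension is differentially algebraic, followed by the tower formula for differential transcendence degree from \cite{kolchinDifferentialAlgebraAlgebraic1973}, cleanly yields the equality of differential transcendence degrees and hence both directions at once.

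The paper takes a deliberately more elementary route, precisely in order to avoid importing Kolchin's theory of differential transcendence degree as a black box. Rather than invoking differential transcendence degree, it works with ordinary \emph{algebraic} transcendence degree: it observes that $R\langle \ell_1,\dots,\ell_{n-1}\rangle(z_1,\dots,z_n)=R\langle z_1,\dots,z_n\rangle$, so the extension $R\langle z_1,\dots,z_n\rangle/R\langle \ell_1,\dots,\ell_{n-1}\rangle$ has finite algebraic transcendence degree; it then relabels so that the assumed differential relation forces $R\langle z_1,\dots,z_n\rangle/R\langle z_1,\dots,z_{n-1}\rangle$ to have finite algebraic transcendence degree as well; and finally it uses additivity of ordinary transcendence degree in towers to conclude that adjoining $\ell_n$ to $R\langle \ell_1,\dots,\ell_{n-1}\rangle$ cannot add infinitely many algebraically independent derivatives. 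Your approach is shorter and more conceptual but leans on a nontrivial structural theorem; the paper's approach is longer and slightly ad hoc but is entirely self-contained modulo classical field theory. The recursion $\delta^{(k)}(z_i)=P_{k,i}\,z_i$ that you set up is exactly what underlies the paper's equality $R\langle \ell_i\rangle(z_i)=R\langle z_i\rangle$, so the two arguments share the same algebraic core.
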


\begin{proof}
    If $\frac{\delta(z_1)}{z_1}$ through $\frac{\delta(z_n)}{z_n}$ are differentially dependent over $R$ then so are $z_1$ through $z_n$, because any relation among $\frac{\delta(z_k)}{z_k}$ are relations among $z_k$. We show the other direction. We have the following diagram of inclusions of $\delta$-algebras:
    \[\begin{tikzcd}
	{R\left<z_1,\dots,z_n\right>} & \supset & {R\left<z_1,\dots,z_{n-1}\right>} \\
	\cup && \cup \\
	{R\left< \frac{\delta(z_1)}{z_1},\dots,\frac{\delta(z_n)}{z_n}\right>} & \supset & {R\left< \frac{\delta(z_1)}{z_1},\dots,\frac{\delta(z_{n-1})}{z_{n-1}}\right>}
    \end{tikzcd}\]
The extension $R\left<z_1,\dots,z_{n-1}\right>\Big/R\left<\frac{\delta(z_1)}{z_1},\dots,\frac{\delta(z_{n-1})}{z_{n-1}}\right>$ has finite \emph{algebraic} transcendence degree, in fact at most $n$, in view of
\[R\left<\frac{\delta(z_1)}{z_1},\dots,\frac{\delta(z_{n-1})}{z_{n-1}}\right>(z_1,\dots,z_n) = R\left<z_1,\dots,z_n\right>.\]
Because $z_1$ through $z_n$ are differentially dependent there exists a suitable relabeling of the variables such that $R\left<z_1,\dots,z_n\right> / R\left<z_1,\dots,z_{n-1}\right>$ has finite algebraic transcendence degree: given a differential relationship between $z_1$ through $z_n$ from which $z_n$ or its derivatives cannot be eliminated, adjoining sufficiently many terms of the form $\delta^{(j)}(z_n)$ to $R\left<z_1,\dots,z_{n-1}\right>$ for $j\in\mathbb{Z}_{\geq 0}$ will produce an algebraic relationship between some $\delta^{(j)}(z_n)$, the lower-order derivatives of $z_n$, and the elements of $R\left<z_1,\dots,z_{n-1}\right>$. Differentiating that particular relationship shows that $\delta^{(\ell)}(z_n)$ are already expressed algebraically in terms of lower-order derivatives of $z_n$ for $\ell>j$, implying that the extension $R\left<z_1,\dots,z_n\right> / R\left<z_1,\dots,z_{n-1}\right>$ has finite algebraic transcendence degree.

This shows that the extension $R\left<z_1,\dots,z_n\right>\Big/R\left<\frac{\delta(z_1)}{z_1},\dots,\frac{\delta(z_{n-1})}{z_{n-1}}\right>$ has finite algebraic transcendence degree, due to algebraic transcendence degree being additive in towers. Therefore the intermediate extension \[R\left<\frac{\delta(z_1)}{z_1},\dots,\frac{\delta(z_{n})}{z_{n}}\right>\bigg/R\left<\frac{\delta(z_1)}{z_1},\dots,\frac{\delta(z_{n-1})}{z_{n-1}}\right>\]
has finite algebraic transcendence degree. This implies that $\frac{\delta(z_1)}{z_1}$, $\dots$, $\frac{\delta(z_{n})}{z_{n}}$ are differentially dependent over $R$: if they were not, then by adjoining all the derivatives of $\frac{\delta(z_{n})}{z_{n}}$ to $R\left<\frac{\delta(z_1)}{z_1},\dots,\frac{\delta(z_{n-1})}{z_{n-1}}\right>$ one would obtain an extension of \emph{infinite} algebraic transcendence degree. This proves the result.
\end{proof}

The pioneering work of \cite{Hardouin2008} establishing the differential Galois theory of linear difference equations includes a result that links differential dependence to summability. For the formulation of this result, define a \defemph{$\shift\delta$-algebra} to be an algebra, either a field of a ring, equipped with an automorphism $\shift$ and a derivation $\delta$ satisfying $\shift\circ\delta = \delta\circ\shift$. The following result is \cite[Proposition~3.1]{Hardouin2008}: 

\begin{proposition}
    Let $K$ be a $\shift\delta$-field with the subfield of $\shift$ constants $K^{\shift}$ differentially closed and let $S\supset K$ be a $\shift\delta$-ring such that the subring of $\shift$ constants $S^{\shift}$ is precisely $K^{\shift}$. Let $b_1,\dots,b_n\in K$ and $z_1\dots z_n\in S$ satisfy
    \begin{equation}
        \shift(z_i)-z_i = b_i,\qquad i=1,\dots,n.\label{eq:hardouinSystem}
    \end{equation}
    Then $z_1,\dots,z_n$ are differentially dependent over $K$ if and only if there exists a nonzero homogeneous linear differential polynomial $L(Y_1,\dots,Y_n)$ with coefficients in $K^{\shift}$, and an element $f\in K$, such that
    \[L(b_1,\dots,b_n)=\shift(f)-f.\]
\end{proposition}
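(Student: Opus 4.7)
The plan is to handle the two directions separately, with the forward (``if'') direction being essentially formal and the reverse (``only if'') direction requiring the full Picard-Vessiot apparatus set up earlier in \cite{Hardouin2008}.

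For the ``if'' direction, assume $L(Y_1,\dots,Y_n)$ is a nonzero homogeneous linear differential polynomial with coefficients in $K^{\shift}$ and $f\in K$ such that $L(b_1,\dots,b_n)=\shift(f)-f$. Because $L$ is $K^{\shift}$-linear and $\shift$ commutes with $\delta$, applying $\shift$ to $L(z_1,\dots,z_n)$ yields $\shift(L(z_i))=L(\shift(z_i))=L(z_i+b_i)=L(z_i)+L(b_i)$, so $\shift(L(z_i)-f)=L(z_i)-f$. By hypothesis $S^{\shift}=K^{\shift}$, which forces $L(z_i)-f\in K^{\shift}$. This is a nonzero linear differential identity with coefficients in $K$, witnessing differential dependence of $z_1,\dots,z_n$ over $K$.

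For the ``only if'' direction I would invoke the Picard-Vessiot theory for $\shift\delta$-equations. Build a Picard-Vessiot ring $T$ over $K$ generated by $z_1,\dots,z_n$ together with all $\delta^{(k)}(z_i)$, and consider its $\shift\delta$-Galois group $\mathcal{G}$. Since each $z_i$ is a $\shift$-antiderivative of an element of $K$, any $\sigma\in\mathcal{G}$ must act by $\sigma(z_i)=z_i+c_i(\sigma)$ with $c_i(\sigma)\in K^{\shift}$; this embeds $\mathcal{G}$ as a closed $\delta$-subgroup of the additive group scheme $\mathbb{G}_a^n$ over $K^{\shift}$. The hypothesis that $K^{\shift}$ is differentially closed, combined with the Cassidy-Kolchin classification of closed differential subgroups of $\mathbb{G}_a^n$, implies that $\mathcal{G}$ is precisely the zero locus of finitely many homogeneous linear differential polynomials in $K^{\shift}\{Y_1,\dots,Y_n\}$. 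Differential dependence of $z_1,\dots,z_n$ over $K$ translates, via the $\shift\delta$-Galois correspondence, to $\mathcal{G}$ being a proper differential subgroup of $\mathbb{G}_a^n$, so there exists a nonzero $L\in K^{\shift}\{Y_1,\dots,Y_n\}$ vanishing on $\mathcal{G}$. By $\mathcal{G}$-invariance, $L(z_1,\dots,z_n)\in T^{\mathcal{G}}=K$; writing $L(z_1,\dots,z_n)=f$ and applying $\shift$ to both sides, the $\shift$-invariance of the coefficients of $L$ yields $\shift(f)-f=L(\shift(z_i)-z_i)=L(b_1,\dots,b_n)$, which is the desired conclusion.

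The main obstacle is the Galois-theoretic reverse direction: producing a Picard-Vessiot ring with no new $\shift$-constants and invoking the structure theorem for closed $\delta$-subgroups of $\mathbb{G}_a^n$. These are exactly the ingredients that force the hypotheses that $K^{\shift}$ be differentially closed and that $S^{\shift}=K^{\shift}$, both of which are present in the proposition. A more hands-on alternative would be to take a nonzero differential relation satisfied by the $z_i$ of minimal complexity and iteratively apply $\shift-\Id$ to its coefficients in an Ostrowski-Kolchin style reduction, gradually forcing the coefficients to be $\shift$-fixed and the relation to be homogeneous and linear; but the subgroups-of-$\mathbb{G}_a^n$ approach is cleaner and meshes with the Galois-theoretic framework already developed in \cite{Hardouin2008}.
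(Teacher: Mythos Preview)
The paper does not give its own proof of this proposition; it is simply quoted as \cite[Proposition~3.1]{Hardouin2008} and used as a black box. Your sketch is essentially the proof that appears in that reference: the ``if'' direction is exactly the formal computation you wrote (use $\shift$-invariance of the coefficients of $L$ and linearity to get $\shift(L(z_i)-f)=L(z_i)-f$, hence $L(z_i)\in K$), and the ``only if'' direction in \cite{Hardouin2008} proceeds precisely by realizing the $\shift\delta$-Galois group as a closed differential subgroup of $\mathbb{G}_a^n$ over the differentially closed field $K^{\shift}$ and invoking Cassidy's classification of such subgroups as zero sets of homogeneous linear differential polynomials.

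One small technical caveat: the elements $z_1,\dots,z_n\in S$ need not themselves generate a $\shift\delta$-Picard--Vessiot ring. The argument in \cite{Hardouin2008} builds an abstract Picard--Vessiot ring $R$ for the system (which exists and is unique because $K^{\shift}$ is differentially closed), and then the hypothesis $S^{\shift}=K^{\shift}$ is what guarantees that the $z_i$ in $S$ differ from a fundamental solution in $R$ only by elements of $K^{\shift}$, so differential dependence of the $z_i$ over $K$ is equivalent to properness of the Galois group. Your outline implicitly assumes this identification, but otherwise the strategy is correct and matches the cited proof.
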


Deeper than just a connection between differential dependence and summability, this result gives a test for when there exist differerential symmetries amongst a set of solutions $z_1$, $\dots$, $z_n$ to the nonhomogeneous system~\eqref{eq:hardouinSystem} of linear difference equations. If there does not exist $L(Y_1,\dots,Y_n)$ as described above, then no differential symmetries exist between $z_1$, $\dots$, $z_n$. If there does exist $L(Y_1,\dots,Y_n)$ as above, there \emph{do} exist differential symmetries between $z_1$, $\dots$, $z_n$, and searching for them is worthwhile. In the broader Galois theoretic context, this result gives us a gauge of the size of the differential Galois group associated with system~\eqref{eq:hardouinSystem}: the existence of differential symmetries implies a smaller Galois group, and the lack of existence implies the Galois group is as large as possible.

It would certainly be daunting to search for all homogeneous linear differential polynomials $L$ and elements $f\in K$ satisfying $L(b_1,\dots,b_n)=\shift(f)-f$. The prevailing strategy for computing differential Galois groups is to show that in particular cases it suffices to search for differential polynomials $L$ of particular forms. The rational shift context of \cite[Proposition~2.1]{Arreche2016} and the rational $q$-difference context of \cite[Corollary~3.5]{Arreche2022} concern systems of \emph{homogeneous} linear difference equations
\begin{equation}
\shift(z_i) = a_iz_i.\label{eq:homSystem}
\end{equation}
Taking the logarithmic derivative shows that this system gives rise to a nonhomogeneous system as in \eqref{eq:hardouinSystem}:
\begin{align}
\frac{\delta(\shift(z_i))}{\shift(z_i)} &= \frac{\delta(a_iz_i)}{a_iz_i}\nonumber\\
&= \frac{\delta(a_i)}{a_i} + \frac{\delta(z_i)}{z_i}\nonumber\\
\Longrightarrow\quad  \shift\left(\frac{\delta(z_i)}{z_i}\right) - \frac{\delta(z_i)}{z_i} &= \frac{\delta(a_i)}{a_i},\label{eq:transformEquation}
\end{align}
using the fact that $\shift$ and $\delta$ commute. The authors of \cite{Arreche2016} and \cite{Arreche2022} apply \cite[Proposition~3.1]{Hardouin2008} to show that for determining the differential dependence of $z_1$, $\dots$, $z_n$, it suffices to search for differential polynomials $L$ of the form
\[L(Y_1,\dots,Y_n) = \ell_1Y_1 + \cdots + \ell_n Y_n,\]
where $\ell_1,\dots,\ell_n\in\mathbb{Z}$. These are in fact polynomials in the \emph{algebraic} sense. Linear polynomials with integer coefficients form a much smaller search space and create much more manageable algorithms for determining differential dependence of solutions to system~\eqref{eq:homSystem}.

Here we give the precise wording of the result \cite[Corollary~3.5]{Arreche2022}. While it rests on an application of \cite[Proposition~3.1]{Hardouin2008} it twists the condition of summability in a manner that is mirrored in the elliptic case. For this proposition, $K=C(x)$ where $C$ is a differentially closed field with derivation $\delta$ of characteristic $0$, and $\shift$ is the automorphism on $C(x)$ defined by $x\mapsto qx$ where $q\in C^{\delta}$ is a nonzero $\delta$ constant that is not a root of unity.

\begin{proposition}
    Let $K$ be a $\shift\delta$-field, and let $R$ be a $\shift\delta$-algebra containing $K$ such that the subalgebras of $\shift$ constants $K^{\shift}$ and $R^{\shift}$ are the same field $C$. Let $a_1,\dots,a_n\in C^{\delta}(x)$ be rational functions with coefficients in the $\delta$-constants of $C$. Let $z_1,\dots,z_n\in R^{\times}$ satisfy
    \[\shift(z_i) = a_iz_i,\qquad i=1,\dots,n.\]
    Then $z_1$, $\dots$, $z_n$ are differentially dependent over $K$ if and only if there exist $\ell_1,\dots,\ell_n\in\mathbb{Z}$ not all zero and with $\gcd(\ell_1,\dots,\ell_n)=1$; $c\in \mathbb{Z}$; and $f\in K$ such that
    \[\ell_1 \frac{\delta(a_1)}{a_1} + \cdots + \ell_n\frac{\delta(a_n)}{a_n} = \shift(f)-f+c.\]
\end{proposition}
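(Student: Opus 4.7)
The plan is to combine the logarithmic derivative trick of Lemma~\ref{lem:difDep} with the Hardouin--Singer criterion \cite[Proposition~3.1]{Hardouin2008} to obtain an initial differential polynomial relation, and then to refine this relation to the stated $\mathbb{Z}$-linear form by analyzing the structure of logarithmic derivatives of rational functions in $C^{\delta}(x)$ via the $q$-discrete residue machinery of \cite{Chen2012}.

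First I would set $w_i := \delta(z_i)/z_i$. By Lemma~\ref{lem:difDep}, the $z_i$ are differentially dependent over $K$ if and only if the $w_i$ are. Applying $\delta$ to $\shift(z_i)=a_iz_i$ and dividing through by $\shift(z_i)=a_iz_i$, using that $\shift$ and $\delta$ commute, yields
\[ \shift(w_i) - w_i \;=\; \frac{\delta(a_i)}{a_i} \;=:\; b_i \;\in\; C^{\delta}(x), \]
which is a system of the form~\eqref{eq:hardouinSystem}. Since $K^{\shift}=R^{\shift}=C$ is differentially closed, \cite[Proposition~3.1]{Hardouin2008} yields that the $w_i$ are differentially dependent over $K$ if and only if there exist a nonzero homogeneous linear differential polynomial $L(Y_1,\dots,Y_n)$ with coefficients in $C$ and an element $g \in K$ such that $L(b_1,\dots,b_n)=\shift(g)-g$. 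The content of the proposition is then that such an $L$ can be chosen of the simpler form $\ell_1 Y_1 + \cdots + \ell_n Y_n$ with coprime integers $\ell_i$, provided we allow the right-hand side to become $\shift(f)-f+c$ for some $f\in K$ and $c\in\mathbb{Z}$.

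Refining $L$ to a $\mathbb{Z}$-linear combination without higher derivatives is the main obstacle. The plan is to exploit the explicit form of $b_i$: because $a_i\in C^{\delta}(x)$ and $\delta=x\,d/dx$ is the natural derivation commuting with the $q$-shift, each $a_i$ factors over $C$ (which is algebraically closed) as $a_i=\gamma_i\prod_k(x-\alpha_{i,k})^{m_{i,k}}$ with $\gamma_i,\alpha_{i,k}\in C$ and $m_{i,k}\in\mathbb{Z}$, and consequently
\[ b_i \;=\; \deg(a_i) + \sum_k \frac{m_{i,k}\,\alpha_{i,k}}{x-\alpha_{i,k}} \]
has only simple poles, with integer constant part $\deg(a_i)$ and pole residues $m_{i,k}\alpha_{i,k}$ lying in the $\mathbb{Z}$-module generated by the $\alpha_{i,k}$. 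The higher derivatives $\delta^j(b_i)$ acquire higher-order poles at the same locations, but their $q$-discrete residues still decompose as $\mathbb{Z}$-combinations of explicit polynomial expressions in the $\alpha_{i,k}$. Imposing on $L(b_1,\dots,b_n)$ the summability criterion of \cite[Proposition~2.10]{Chen2012}, that every $q$-discrete residue at every nonzero $q$-orbit and at infinity vanishes, produces a linear system over $C$ on the coefficients of $L$ whose solutions are spanned, up to $\mathbb{Q}$-rescaling, by $\mathbb{Z}$-relations among the multiplicity vectors $(m_{i,k})_i$. Clearing denominators and dividing by the $\gcd$ yields coprime integers $\ell_i$ for which all nonzero-orbit $q$-discrete residues of $\sum_i\ell_i b_i$ vanish, with the surviving residue at infinity $c := \sum_i\ell_i\deg(a_i)\in\mathbb{Z}$; hence $\sum_i\ell_i b_i=\shift(f)-f+c$ for some $f\in K$. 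The hardest technical point, which I expect to be the main obstacle, is showing that the higher-derivative terms $\delta^j(Y_i)$ with $j\geq 1$ appearing in the original $L$ cannot contribute any genuinely new relations beyond the $\mathbb{Z}$-linear ones coming from the multiplicities; this requires careful tracking of how iterates of $\delta$ shift pole orders while preserving the arithmetic structure of the $q$-discrete residues.

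For the converse, assume $\sum_i\ell_i b_i=\shift(f)-f+c$ with $\ell_i,c\in\mathbb{Z}$ and $f\in K$. Set $W := \sum_i\ell_i w_i - f \in R$. Then $\shift(W)-W = \sum_i\ell_i b_i - (\shift(f)-f) = c$, and applying $\delta$ (using $\delta(c)=0$ since $c\in\mathbb{Z}\subseteq C^{\delta}$) gives $\shift(\delta W)=\delta W$, so $\delta W\in R^{\shift}=C\subseteq K$. Therefore $\sum_i\ell_i\,\delta(w_i)=\delta W+\delta f$ lies in $K$, which exhibits a nonzero differential polynomial identity over $K$ satisfied by the $w_i$ (the polynomial $\sum_i\ell_i\delta(Y_i)-(\delta W+\delta f)$ vanishes on the tuple $(w_1,\dots,w_n)$ and is nonzero because not all $\ell_i$ vanish). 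Hence the $w_i$ are differentially dependent over $K$, and by Lemma~\ref{lem:difDep} so are the $z_i$.
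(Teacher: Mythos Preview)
The paper does not actually prove this proposition: it is quoted verbatim from \cite[Corollary~3.5]{Arreche2022}, and the only remark about its proof is that it ``is shown in \cite{Arreche2022} by analyzing the $q$-discrete residues of $\frac{\delta(a_i)}{a_i}$.'' So there is no in-paper argument to compare against directly. That said, the paper \emph{does} prove the elliptic analogue (Proposition~\ref{prop:onlyUsesOrbitalResidues}) in full, and that proof is the template your sketch should be measured against.

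Your converse direction is fine and essentially matches the paper's argument in the elliptic case. Your setup for the forward direction---pass to $w_i=\delta(z_i)/z_i$ via Lemma~\ref{lem:difDep}, then invoke \cite[Proposition~3.1]{Hardouin2008} to obtain a homogeneous linear $L$ over $C$ with $L(b_1,\dots,b_n)=\shift(g)-g$---is also exactly right, as is the observation that each $b_i=\delta(a_i)/a_i$ has only simple poles with residues in a $\mathbb{Z}$-lattice. Where your sketch becomes vague is precisely the step you flag as the ``main obstacle'': you propose to impose vanishing of \emph{all} $q$-discrete residues simultaneously and argue that the resulting linear system is spanned by $\mathbb{Z}$-relations among multiplicity vectors. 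This is not how the argument goes, and as stated it is not clear it would work: the coefficients $c_{i,j}$ of $L$ live in $C$, and lower-derivative terms contribute to the same $q$-discrete residues as higher ones, so the system mixes orders in a way that does not obviously disentangle into a $\mathbb{Z}$-statement.

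The clean trick, visible in the paper's proof of Proposition~\ref{prop:onlyUsesOrbitalResidues}, is to look only at the \emph{top-order} residue. If $r$ is the highest derivative order appearing in $L$, then $\delta^j(b_i)$ has poles of order at most $j+1$, so the order-$(r{+}1)$ $q$-discrete residue of $L(b_1,\dots,b_n)$ sees only the terms $c_{i,r}\,\delta^r(b_i)$. A direct computation (the $q$-analogue of Lemma~\ref{lem:orderReduction}) shows that this top-order residue is a fixed nonzero scalar times $\sum_i c_{i,r}\cdot\mathrm{qres}(b_i,\omega,1)$, and summability forces it to vanish for every orbit $\omega$. Since each $\mathrm{qres}(b_i,\omega,1)$ is an integer (it is a sum of multiplicities), the vectors $(\mathrm{qres}(b_i,\omega,1))_\omega$ are $C$-linearly dependent with integer entries, hence $\mathbb{Z}$-linearly dependent; this produces the coprime $\ell_i$. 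One then checks that $\sum_i\ell_i b_i$ has all finite-orbit $q$-discrete residues zero, so by \cite[Proposition~2.10]{Chen2012} it equals $\shift(f)-f$ plus its residue at infinity, which is $c=\sum_i\ell_i\deg(a_i)\in\mathbb{Z}$. Your outline is salvageable once you replace the global linear-system heuristic with this top-order isolation step.
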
 \medskip \medskip

This result is shown in \cite{Arreche2022} by analyzing the $q$-discrete residues of $\frac{\delta(a_i)}{a_i}$. It stands to reason that similar results could be shown in the elliptic case with orbital and panorbital residues.

\section{Differential Dependence in the Elliptic Case}

With this context we are equipped to start analyzing the connection of summability to differential dependence in the elliptic case. There are results that we can formulate immediately using the technology of orbital residues, but it is currently unknown if there exists a result as strong as \cite[Corollary~3.5]{Arreche2022} in the elliptic case.

Let $\fieldC$ and $\shift$ be as in Chapter~\ref{c:analytic}, and let $\delta$ be the derivation $\frac{d}{dz}$ on $\mathbb{C}$. Note that $\shift$ and $\delta$ commute due to the chain rule, and this places a $\shift\delta$-field structure on $\fieldC$. We will need the following computational lemma:

\begin{lemma}
\label{lem:orderReduction}
        Suppose $a\in\fieldC^{\times}$, $\omega\in\allOmegas$, and $r\in \mathbb{Z}_{\geq 0}$. Then
    \[\ores\left( \delta^r\left(\frac{\delta(a)}{a}\right),\omega,r+1\right) = (-1)^r\cdot r!\cdot \ores \left( \frac{\delta(a)}{a},\omega,1\right)\in\mathbb{Z}.\]
\end{lemma}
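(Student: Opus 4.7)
The plan is to proceed by a direct local computation at each pole in the orbit $\omega$, exploiting the key fact (emphasized in Section~\ref{s:sigmaExample}) that the logarithmic derivative of any nonzero elliptic function has only simple poles, located precisely at the zeros and poles of $a$, with $c_1$-coefficients equal to the corresponding integer multiplicities. The integrality claim $\ores\!\left(\frac{\delta(a)}{a},\omega,1\right)\in\mathbb{Z}$ is therefore immediate by summing those integer multiplicities over the orbit.

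For the main identity, I would fix any $\alpha = q_\omega + n\shiftnumber$ and write the local expansion
\[ \frac{\delta(a)}{a} = \frac{m(\alpha)}{z-\alpha} + h_\alpha(z), \]
with $h_\alpha$ holomorphic at $\alpha$ and $m(\alpha)\in\mathbb{Z}$. Applying $\delta^r$ term-by-term and using the elementary identity
\[ \delta^r\!\left(\frac{1}{z-\alpha}\right) = \frac{(-1)^r\, r!}{(z-\alpha)^{r+1}} \]
shows that the principal part of $\delta^r\!\left(\frac{\delta(a)}{a}\right)$ at $\alpha$ consists of the single term $\dfrac{(-1)^r\, r!\, m(\alpha)}{(z-\alpha)^{r+1}}$, since $\delta^r(h_\alpha)$ remains holomorphic at $\alpha$ and therefore cannot contribute to any pole. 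In particular,
\[ c_{r+1}\!\left(\delta^r\!\left(\frac{\delta(a)}{a}\right),\alpha\right) = (-1)^r\, r!\cdot m(\alpha). \]

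Summing this identity over $n\in\mathbb{Z}$ (equivalently, over all representatives $\alpha=q_\omega+n\shiftnumber$ of the orbit $\omega$) collapses to
\[ \ores\!\left(\delta^r\!\left(\frac{\delta(a)}{a}\right),\omega,r+1\right) = (-1)^r\, r!\cdot \sum_{n\in\mathbb{Z}} m(q_\omega+n\shiftnumber) = (-1)^r\, r!\cdot \ores\!\left(\frac{\delta(a)}{a},\omega,1\right), \]
which is the desired formula. There is no real obstacle in this argument: everything rests on $\frac{\delta(a)}{a}$ having only simple poles, so the order-$(r+1)$ coefficient of its $r$-th derivative at each $\alpha$ comes from exactly one term of the local expansion, with no contamination from higher-order principal parts.
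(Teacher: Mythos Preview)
Your proof is correct and follows essentially the same approach as the paper. The only cosmetic difference is that the paper packages the computation through the global $\zeta$-expansion $\frac{\delta(a)}{a} = b + \sum_k m_k\,\zeta(z-\alpha_k)$ and differentiates that expression $r$ times, whereas you work directly with the local Laurent expansion at each $\alpha$; since the $\zeta$-expansion coefficients $c_j(f,\alpha)$ are by construction the principal-part coefficients, the two computations are literally identical.
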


Recall that the orbital residues are computed with respect to an analytic pinning: the pinning is still assumed to be fixed a priori.

\begin{proof}
    Recall from the discussion of Section~\ref{s:sigmaExample} that the logarithmic derivative $\frac{\delta(a)}{a}$ has a $\zeta$-expression of the form
    \[\frac{\delta(a)}{a} = b+\sum\limits_{k=1}^{n} m_k\zeta(z-\alpha_k),\]
    where $b\in\mathbb{C}$, $m_k\in\mathbb{Z}$, and $\{\alpha_1,\dots,\alpha_n\}$ are representatives of a complete set of zeros and poles of $a$ modulo $\Lambda$. It follows that
    \[\ores\left(\frac{\delta(a)}{a},\omega,1\right)\in\mathbb{Z}\ \text{ for all }\omega\in\allOmegas.\]
    For $r\geq 1$ the $r$th derivative of $\frac{\delta(a)}{a}$ is
    \[\delta^r\left(\frac{\delta(a)}{a}\right) = \sum\limits_{k=1}^{n} m_k\zeta^{(r)}(z-\alpha_k),\]
    implying that the analytic orbital residue of $\delta^r(\delta(a)/a)$ of order $r+1$ is
    \[\ores\left( \delta^r\left(\frac{\delta(a)}{a}\right),\omega,r+1\right) = \sum\limits_{k=1}^{n} m_k\cdot r!\cdot (-1)^r = (-1)^r\cdot r!\cdot \ores\left(\frac{\delta(a)}{a},\omega,1\right).\]
    These numbers are integers for all $\omega\in\allOmegas$, which proves the result.
\end{proof}

The following result can be shown now in the analytic setting, and it only relies on the technology of orbital residues developed in \cite{Dreyfus2018} and reformulated analytically in Chapter~\ref{c:analytic}:

\begin{proposition}
\label{prop:onlyUsesOrbitalResidues}
    Let $R$ be a $\shift\delta$-$\fieldC$-algebra such that the elements of $R$ invariant under $\shift$ are precisely the elements of $\fieldC$ invariant under $\shift$: denote these subalgebras $R^{\shift}$ and $\fieldC^{\shift}$ respectively. Suppose $a_1,\dots,a_n\in \fieldC^{\times}$ and $y_1,\dots,y_n\in R^{\times}$ satisfy
    \[\shift(y_i) = a_iy_i;\quad i=1,\dots,n.\]
    Then $y_1$, $\dots$, $y_n$ are differentially dependent over $\fieldC$ if and only if there exists $\ell_1,\dots,\ell_n\in\mathbb{Z}$ not all zero and an element $f\in \fieldC$ such that
    \[\ell_1\delta\left(\dfrac{\delta(a_1)}{a_1}\right)+\cdots + \ell_n\delta\left(\dfrac{\delta(a_n)}{a_n}\right)=\shift(f)-f.\]
\end{proposition}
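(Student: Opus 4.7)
The plan is to funnel everything into the analytic summability criterion (Corollary~\ref{cor:analyticObstruction}) applied to $h := \sum_{i=1}^n \ell_i\,\delta(\delta(a_i)/a_i)$, using Lemma~\ref{lem:orderReduction} as the computational backbone. The converse direction is routine: given the desired integer-coefficient summability equation, $L(Y_1,\ldots,Y_n) := \sum \ell_i \delta(Y_i)$ is a nonzero linear homogeneous differential polynomial with integer (hence $\shift$-constant) coefficients, so Hardouin--Singer \cite[Proposition~3.1]{Hardouin2008} yields differential dependence over $\fieldC$ of the solutions $z_i := \delta(y_i)/y_i$ of the derived system $\shift(z_i) - z_i = \delta(a_i)/a_i$, and Lemma~\ref{lem:difDep} transfers this to $y_1,\ldots,y_n$.

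For the forward direction, I would first run those steps backwards: Lemma~\ref{lem:difDep} reduces the hypothesis to differential dependence of $z_i = \delta(y_i)/y_i$ over $\fieldC$, and Hardouin--Singer then supplies some nonzero linear homogeneous differential polynomial $L(Y_1,\ldots,Y_n) = \sum_{i,\,j \geq 0} c_{i,j}\, \delta^j(Y_i)$ with $c_{i,j} \in \fieldC^{\shift} = \mathbb{C}$ satisfying $L(b_1,\ldots,b_n) = \shift(f) - f$ for some $f \in \fieldC$, where $b_i := \delta(a_i)/a_i$. The hard part is promoting this complex-coefficient, arbitrary-order relation into one concentrated in the $j=1$ slot with integer coefficients.

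The key tool for this promotion is an order-by-order application of Corollary~\ref{cor:analyticObstruction} to $L(b_1,\ldots,b_n)$. Because $\delta^j\zeta$ has a pole of order exactly $j+1$, each $\delta^j(b_i)$ has orbital residues only in order $j+1$, and by Lemma~\ref{lem:orderReduction} these equal $(-1)^j j!\,\ores(b_i,\omega,1)$. Therefore the vanishing of $\ores(L(b_1,\ldots,b_n),\omega,j+1)$ decouples into the family of linear constraints
\[\sum_{i=1}^n c_{i,j}\,\ores(b_i,\omega,1) = 0 \qquad \text{for every } \omega \in \allOmegas \text{ and every } j \geq 0.\]
Since $\ores(b_i,\omega,1) = \sum_n m(q_\omega + n\shiftnumber)$ is an integer orbital sum of zero/pole multiplicities of $a_i$, these equations cut out a $\mathbb{Z}$-defined linear subspace $V \subseteq \mathbb{C}^n$. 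Nonvanishing of $L$ forces at least one slice $(c_{i,j})_i$ to lie in $V \setminus \{0\}$, so $V \neq 0$, and $V \cap \mathbb{Z}^n$ contains a nonzero vector $(\ell_1,\ldots,\ell_n)$.

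The final step is verifying that $h = \sum \ell_i\,\delta(b_i)$ is fully summable via Corollary~\ref{cor:analyticObstruction}. Its orbital residues vanish by the choice $(\ell_i) \in V$. For the panorbital residues, the $\sigma$-product computation of Section~\ref{s:sigmaExample} gives $\delta(b_i) = -\sum_{\omega,n} m(q_\omega + n\shiftnumber)\,\wp(z - q_\omega - n\shiftnumber)$, which has only order-$2$ poles and vanishing constant term in its $\zeta$-expansion (the $\eta(b)$ in the logarithmic derivative of $a_i$ is killed by $\delta$), so $\pano_\mathcal{R}(\delta(b_i),0) = \pano_\mathcal{R}(\delta(b_i),1) = 0$ for every $i$ and every pinning $\mathcal{R}$. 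Both panorbital residues of $h$ thus vanish automatically, Corollary~\ref{cor:analyticObstruction} produces the desired $f \in \fieldC$, and the proof concludes.
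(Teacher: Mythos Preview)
Your proof is correct and broadly parallel to the paper's, but the endgame of the forward direction differs. After extracting integers $\ell_i$ with $\sum_i \ell_i\,\ores(b_i,\omega,1)=0$, the paper applies Corollary~\ref{cor:analyticDHRS} (the orbital-residues-only criterion) to $\sum_i \ell_i\, b_i$, writing it as $\shift(f_2)-f_2 + c_1 + c_2(\zeta(z+\shiftnumber)-\zeta(z))$, and then \emph{differentiates}, invoking Lemma~\ref{lem:analyticLPSummable}(ii) to absorb the remainder into a summable $\wp$-term. You instead pass directly to $h=\sum_i\ell_i\,\delta(b_i)$ and invoke the full obstruction Corollary~\ref{cor:analyticObstruction}, checking that both panorbital residues vanish because $\delta(b_i)$ has no order-$1$ poles and no constant term in its $\zeta$-expansion. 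Both routes are valid; the paper's is deliberately framed to stress (see the remark following its proof) that panorbital residues are \emph{not} needed for this proposition, whereas yours shows they are at worst trivially satisfied. Two smaller differences: the paper reads off the linear constraint only at the top order $r$ of $L$, while you observe the decoupling at every order $j$---either suffices, since a single nonzero slice $(c_{i,j})_i$ already forces $V\neq 0$; and your ``if'' direction via Hardouin--Singer plus Lemma~\ref{lem:difDep} is fine, though the paper does it by a direct three-line computation showing $-f+\sum_i\ell_i\,\delta(\delta(y_i)/y_i)\in R^{\shift}=\mathbb{C}$.
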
 \medskip\medskip

The proof of this result shall mimic that of \cite[Corollary~3.5]{Arreche2022}.

\begin{proof}
    For the if direction, assume that for some $f\in\fieldC$ and some integers $\ell_1$, $\dots$, $\ell_n$ not all zero,
    \[\sum\limits_{i=1}^{n} \ell_i\delta\left(\frac{\delta(a_i)}{a_i}\right)=\shift(f)-f\]
    holds. Because $\delta$ and $\shift$ commute,
    \[\frac{\delta(a_i)}{a_i} = \shift\left(\frac{\delta(y_i)}{y_i}\right) - \frac{\delta(y_i)}{y_i}\]
    for all $1\leq i\leq n$. Therefore
    \begin{align*}
        \shift\left(\sum\limits_{i=1}^{n}  \ell_i\delta\left(\frac{\delta(y_i)}{y_i}\right)\right) - \sum\limits_{y=1}^{n} \ell_i\delta\left(\frac{\delta(y_i)}{y_i}\right) &= \shift(f)-f\\
        \Longrightarrow\quad \shift\left(-f+\sum\limits_{i=1}^{n} \ell_i\delta\left(\frac{\delta(y_i)}{y_i}\right)\right) &= -f+\sum\limits_{i=1}^{n} \ell_i\delta\left(\frac{\delta(y_i)}{y_i}\right).
    \end{align*}
    It follows that
    \[-f+\sum\limits_{i=1}^{n} \ell_i\delta\left(\frac{\delta(y_i)}{y_i}\right)\in R^{\shift}.\]
    From the assumptions of the proposition this implies that the above expression is also in $\fieldC^{\shift}$, which is precisely the field $\mathbb{C}$ because the shift was chosen to be a non-torsion point with respect to $\Lambda$. Therefore $y_1$, $\dots$, $y_n$ are indeed differentially dependent over $\fieldC$.
    
    For the only if direction, assume that $y_1$, $\dots$, $y_m$ are differentially dependent over $\fieldC$. Then from Lemma~\ref{lem:difDep} we also have that $\frac{\delta(y_1)}{y_1}$, $\dots$, $\frac{\delta(y_n)}{y_n}$ are differentially dependent over $\fieldC$. From the derivation of equation~\eqref{eq:transformEquation}, they satisfy the nonhomogeneous linear difference equations
    \[\shift\left(\frac{\delta(y_i)}{y_i}\right) - \frac{\delta(y_i)}{y_i} = \frac{\delta(a_i)}{a_i},\qquad 1\leq i\leq n.\]
    Then \cite[Proposition~3.1]{Hardouin2008} implies that there exists a nonzero homogeneous linear differential polynomial $L$ and an elliptic $f_1\in\fieldC$ such that
    \[g:=L\left(\frac{\delta(a_1)}{a_1},\dots,\frac{\delta(a_n)}{a_n}\right)=\shift(f_1)-f_1.\]
    We can express $g$ on the level of individual terms as
    \begin{equation}
    g = \sum\limits_{i=1}^{n}\sum\limits_{j=0}^{r_i} c_{i,j}\delta^j\left(\frac{\delta(a_i)}{a_i}\right)\label{eq:gDef}\end{equation}
    
    for $c_{i,j}\in\mathbb{C}$. Each elliptic function $\delta(a_i)/a_i\in\fieldC$ only has poles of order $1$, so $\delta^j(\delta(a_i)/a_i)$ has poles of order at most $j+1$. Therefore
    \[\ores\left(\delta^j\left(\frac{\delta(a_i)}{a_i}\right),\omega,j'\right)=0\quad\text{ for all }j'\geq j+1\text{ and } \omega\in\allOmegas.\]
    
    Let $r$ be the maximal $r_i$ that appears in equation~\eqref{eq:gDef}, i.e. let $r$ be the highest-order term of $L$. Then the calculation of the orbital residue of $g$ reduces to
    \begin{align*}
    \ores(g,\omega,r+1)&=\sum\limits_{i=1}^{n}\sum\limits_{j=1}^{r_i} c_{i,j}\cdot\ores\left(\delta^j\left(\frac{\delta(a_i)}{a_i}\right),\omega,r+1\right) \\
    &= \sum\limits_{i=1}^{n} c_{i,r}\cdot\ores\left(\delta^r\left(\frac{\delta(a_i)}{a_i}\right),\omega,r+1\right)
    \end{align*}

    From Lemma~\ref{lem:orderReduction} this simplifies to
    \[\ores(g,\omega,r+1)=(-1)^r\cdot r!\cdot \sum\limits_{i=1}^{n} c_{i,r}\cdot\ores\left(\frac{\delta(a_i)}{a_i},\omega,1\right).\]

    Because $g$ is summable, Lemma~\ref{lem:analyticOrbitalResidues} implies that all the orbital residues of $g$ vanish, so the above sum is zero for all $\omega\in\allOmegas$. That is, the $n$ infinite-dimensional vectors
    \[v_i=\underline{\ores}\left(\frac{\delta(a_i)}{a_i},\omega,1\right),\qquad 1\leq i\leq n\]
    with entries corresponding to $\omega\in\allOmegas$ are $\mathbb{C}$-linearly dependent. Recall from Section~\ref{s:sigmaExample} that the orbital residues of the $\frac{\delta(a_i)}{a_i}$ are all integers, so these vectors are in fact $\mathbb{Z}$-linearly dependent. Consequently there exist integers $\ell_1$, $\dots$, $\ell_m$ not all zero such that all the orbital residues of
    \[\ell_1 \cdot\frac{\delta(a_1)}{a_1} + \cdots + \ell_n\cdot\frac{\delta(a_n)}{a_n}\]
    vanish. From Corollary~\ref{cor:analyticDHRS} that there must there must exist some other $f_2\in\fieldC$, and constants $c_1,c_2\in\mathbb{C}$, such that
    \[\sum\limits_{i=1}^{n}\ell_i\frac{\delta(a_i)}{a_i}=\shift(f_2)-f_2 + c_1 + c_2(\zeta(z+\shiftnumber) - \zeta(z)).\]
    From Lemma~\ref{lem:analyticLPSummable} it follows that
    \[\sum\limits_{i=1}^{n}\ell_i\delta\left(\frac{\delta(a_i)}{a_i}\right)=\shift(\delta(f_2)-c_2\wp(z))-\big(\delta(f_2)-c_2\wp(z)\big).\]
    Let $f=\delta(f_2)-c_2\wp(z)$. This proves the result.
\end{proof}

\begin{remark}
    This proposition is phrased as differential dependence of $y_1$, $\dots$, $y_n$ being equivalent to the existence of parameters $f\in\fieldC$ and $\ell_1,\dots,\ell_n\in\mathbb{Z}$ satisfying properties. However the proof implies that there is a much easier test for differential dependence: find $\ell_1, \dots, \ell_n\in\mathbb{Z}$ such that the orbital residues of
    \[\ell_1 \cdot\frac{\delta(a_1)}{a_1} + \cdots + \ell_n\cdot\frac{\delta(a_n)}{a_n}\]
    all vanish. This removes the task of finding $f$ to determine differential dependence of $y_1$ through $y_n$. This elimination of quantifiers is a facet of the philosophy behind formulating summability in terms of residues.
\end{remark} \medskip \medskip

Notice that the proof of Proposition~\ref{prop:onlyUsesOrbitalResidues} only uses the technology of orbital residues. That of panorbital residues does not appear. As a result, the expressions $\ell_i \cdot\frac{\delta(a_i)}{a_i}$ get wrapped in an additional derivation $\delta$. For the purposes of computing the differential Galois group of a system of linear difference equations, this obscures structural information about the group. However, the panorbital residues are a finer-tuned construction. This introduces the question:

\begin{question}
    Can we use the panorbital residues to alter the previous result in order to glean more information about differential Galois groups?    
\end{question}

An example of this concept in action is the proof of \cite[Corollary~3.5]{Arreche2022}, which uses a ``residue at infinity'' along with the $q$-difference analog of orbital residues to get a more precise statement.

\begin{thesisbib}

\end{thesisbib}


\begin{thebibliography}{DHRS18}

\bibitem[ADR21]{Arreche2021}
C.~E. Arreche, T.~Dreyfus, and J.~Roques.
\newblock Differential transcendence criteria for second-order linear difference equations and elliptic hypergeometric functions.
\newblock {\em Journal de l’\'{E}cole polytechnique--Math\'{e}matiques}, 8:147--168, 2021.

\bibitem[Alu09]{aluffiAlgebraChapter02021}
P.~Aluffi.
\newblock {\em Algebra: Chapter 0}.
\newblock American Mathematical Society, Providence, R.I., 2009.

\bibitem[Arr17]{Arreche2016}
C.~E. Arreche.
\newblock Computation of the difference-differential galois group and differential relations among solutions for a second-order linear difference equation.
\newblock {\em Communications in Contemporary Mathematics}, 19(06):1650056, 2017.

\bibitem[AZ22]{Arreche2022}
C.~E. Arreche and Y.~Zhang.
\newblock Computing differential galois groups of second-order linear $q$-difference equations.
\newblock {\em Advances in Applied Mathematics}, 132:102273, 2022.

\bibitem[Bor01]{borelEssaysHistoryLie2001}
A.~Borel.
\newblock {\em Essays in the History of {{Lie}} Groups and Algebraic Groups}, volume~21 of {\em History of Mathematics}.
\newblock American Mathematical Society, 2001.

\bibitem[CS12]{Chen2012}
S.~Chen and M.~F. Singer.
\newblock {Residues and Telescopers for Bivariate Rational Functions}.
\newblock {\em Advances in Applied Mathematics}, 49:111--133, 2012.

\bibitem[DHRS18]{Dreyfus2018}
T.~Dreyfus, C.~Hardouin, J.~Roques, and M.~F. Singer.
\newblock On the nature of the generating series of walks in the quarter plane.
\newblock {\em Inventiones mathematicae}, 213:139--203, 2018.

\bibitem[DR15]{DreyfusRoques2015}
T.~Dreyfus and J.~Roques.
\newblock Galois groups of difference equations of order two on elliptic curves.
\newblock {\em Symmetry, Integrability and Geometry: Methods and Applications}, 11:article ID 003 (23 pages), 2015.

\bibitem[DS05]{diamondFirstCourseModular2005}
F.~Diamond and J.~Shurman.
\newblock {\em A {{First Course}} in {{Modular Forms}}}, volume 228 of {\em Graduate {{Texts}} in {{Mathematics}}}.
\newblock Springer-Verlag, New York, 2005.

\bibitem[Gol03]{Goldschmidt2003}
D.~M. Goldschmidt.
\newblock {\em Algebraic Functions and Projective Curves}, volume 215.
\newblock Springer-Verlag, 2003.

\bibitem[Har77]{hartshorneAlgebraicGeometry1977}
R.~Hartshorne.
\newblock {\em Algebraic {{Geometry}}}, volume~52 of {\em Graduate {{Texts}} in {{Mathematics}}}.
\newblock Springer, New York, 1977.

\bibitem[Hen97]{HendriksQ}
P.~Hendriks.
\newblock An algorithm for computing a standard form for second-order linear $q$-difference equations.
\newblock {\em Journal of Pure and Applied Algebra}, 117--118:331--352, 1997.

\bibitem[Hen98]{HendriksShift}
P.~Hendriks.
\newblock An algorithm determining the difference galois group of second order linear difference equations.
\newblock {\em Journal of Symbolic Computation}, 26(4):445--461, 1998.

\bibitem[HS08]{Hardouin2008}
C.~Hardouin and M.~F. Singer.
\newblock Differential {{Galois}} theory of linear difference equations.
\newblock {\em Mathematische Annalen}, 342(2):333--377, 2008.

\bibitem[HS21]{Hardouin2021}
C.~Hardouin and M.~F. Singer.
\newblock On differentially algebraic generating series for walks in the quarter plane.
\newblock {\em Selecta Mathematica}, 27(5), 89, 2021.

\bibitem[Jac35]{Jacobi1835}
C.~G.~J. Jacobi.
\newblock De functionibus duarum variabilium quadrupliciter periodicis, quibus theoria transcendentium abelianarum innititur.
\newblock {\em Journal f\"{u}r die reine und angewandte Mathematik (Crelles Journal)}, 1835:55--78, 1835.

\bibitem[Kir92]{Kirwan1992}
F.~Kirwan.
\newblock {\em Complex Algebraic Curves}.
\newblock London Mathematical Society Student Texts. Cambridge University Press, 1992.

\bibitem[Kol48]{Kolchin1948}
E.~R. Kolchin.
\newblock Algebraic matric groups and the picard--vessiot theory of homogeneous linear ordinary differential equations.
\newblock {\em Annals of Mathematics}, 49:1--42, 1948.

\bibitem[Kol73]{kolchinDifferentialAlgebraAlgebraic1973}
E.~R. Kolchin.
\newblock {\em Differential Algebra and Algebraic Groups}, volume~54 of {\em Pure and Applied Mathematics}.
\newblock Academic Press, New York-London, 1973.

\bibitem[Sil94]{silvermanAdvancedTopicsArithmetic1994}
J.~H. Silverman.
\newblock {\em Advanced {{Topics}} in the {{Arithmetic}} of {{Elliptic Curves}}}, volume 151 of {\em Graduate {{Texts}} in {{Mathematics}}}.
\newblock Springer-Verlag, New York, 1994.

\bibitem[Sil09]{Silverman2009}
J.~H. Silverman.
\newblock {\em The Arithmetic of Elliptic Curves}, volume 106 of {\em Graduate {{Texts}} in {{Mathematics}}}.
\newblock Springer-Verlag, New York, second edition, 2009.

\bibitem[Spi08]{Spiridonov2008}
V.~P. Spiridonov.
\newblock Essays on the theory of elliptic hypergeometric functions.
\newblock {\em Russian Mathematical Surveys}, 63(3):405--472, 2008.

\bibitem[Spi19]{Spiridonov2020}
V.~P. Spiridonov.
\newblock Introduction to the theory of elliptic hypergeometric integrals.
\newblock arXiv preprint arXiv:1912.12971, 2019.

\bibitem[Sti09]{Stichtenoth2009}
H.~Stichtenoth.
\newblock {\em Algebraic Function Fields and Codes}, volume 254.
\newblock Springer Berlin, Heidelberg, 2009.

\bibitem[vdPS97]{vdPSingSigma}
M.~van~der Put and M.~F. Singer.
\newblock {\em Galois Theory of Difference Equations}, volume 1666.
\newblock Springer Berlin, Heidelberg, 1997.

\bibitem[WW27]{whittakerCourseModernAnalysis2006}
E.~T. Whittaker. and G.~N. Watson.
\newblock {\em A Course of Modern Analysis}.
\newblock Cambridge University Press, New York, fourth edition, 1927.

\end{thebibliography}
\end{document}